\documentclass[10pt, a4paper]{amsart}

\usepackage{amssymb,amsfonts}
\usepackage[all,arc]{xy}
\usepackage{enumerate}
\usepackage{mathrsfs}
\usepackage{tikz-cd}
\usepackage{enumitem}
\usepackage{empheq}
\usepackage{kotex}
\usepackage{ifpdf}
\usepackage{mathtools}
\usepackage{pst-node, pst-plot, pstricks}
\usepackage{mathrsfs}
\usepackage{amsmath}
\usepackage{epsfig}
\usepackage{amscd}
\usepackage{graphicx, color}
\graphicspath{ {./images/} }

\def\E{\ifmmode{\mathbb E}\else{$\mathbb E$}\fi} %natural numbers
\def\N{\ifmmode{\mathbb N}\else{$\mathbb N$}\fi} %natural numbers%
\def\R{\ifmmode{\mathbb R}\else{$\mathbb R$}\fi} %real numbers
\def\Q{\ifmmode{\mathbb Q}\else{$\mathbb Q$}\fi} %rational numbers
\def\C{\ifmmode{\mathbb C}\else{$\mathbb C$}\fi} %complex numbers
\def\H{\ifmmode{\mathbb H}\else{$\mathbb H$}\fi} %complex numbers
\def\Z{\ifmmode{\mathbb Z}\else{$\mathbb Z$}\fi} %integers
\def\P{\ifmmode{\mathbb P}\else{$\mathbb P$}\fi} %real numbers
\def\T{\ifmmode{\mathbb T}\else{$\mathbb T$}\fi} %real numbers
\def\SS{\ifmmode{\mathbb S}\else{$\mathbb S$}\fi} %real numbers
\def\DD{\ifmmode{\mathbb D}\else{$\mathbb D$}\fi} %real numbers

\DeclareSymbolFont{yhlargesymbols}{OMX}{yhex}{m}{n}

\DeclareMathAccent{\widetriangle}{\mathord}{yhlargesymbols}{"E6}

\def\E{\ifmmode{\mathbb E}\else{$\mathbb E$}\fi} %natural numbers
\def\N{\ifmmode{\mathbb N}\else{$\mathbb N$}\fi} %natural numbers%
\def\R{\ifmmode{\mathbb R}\else{$\mathbb R$}\fi} %real numbers
\def\Q{\ifmmode{\mathbb Q}\else{$\mathbb Q$}\fi} %rational numbers
\def\C{\ifmmode{\mathbb C}\else{$\mathbb C$}\fi} %complex numbers
\def\H{\ifmmode{\mathbb H}\else{$\mathbb H$}\fi} %complex numbers
\def\Z{\ifmmode{\mathbb Z}\else{$\mathbb Z$}\fi} %integers
\def\P{\ifmmode{\mathbb P}\else{$\mathbb P$}\fi} %real numbers
\def\T{\ifmmode{\mathbb T}\else{$\mathbb T$}\fi} %real numbers
\def\SS{\ifmmode{\mathbb S}\else{$\mathbb S$}\fi} %real numbers
\def\DD{\ifmmode{\mathbb D}\else{$\mathbb D$}\fi} %real numbers

\newcommand{\ben}{\begin{enumerate}}
\newcommand{\een}{\end{enumerate}}
\newcommand{\be}{\begin{equation}}
\newcommand{\ee}{\end{equation}}
\newcommand{\bea}{\begin{eqnarray}}
\newcommand{\eea}{\end{eqnarray}}
\newcommand{\beastar}{\begin{eqnarray*}}
\newcommand{\eeastar}{\end{eqnarray*}}
\newcommand{\bc}{\begin{center}}
\newcommand{\ec}{\end{center}}

\theoremstyle{theorem}
\newtheorem{thm}{Theorem}[section]
\newtheorem{cor}[thm]{Corollary}
\newtheorem{lem}[thm]{Lemma}
\newtheorem{prop}[thm]{Proposition}
\newtheorem{'thm'}[thm]{'Theorem'}

\theoremstyle{definition}
\newtheorem{defn}[thm]{Definition}
\newtheorem{rem}[thm]{Remark}

\newtheorem{exam}[thm]{Example}

\newtheorem{proof-sketch}[thm]{Proof-Sketch}

\newtheorem{lem-defn}[thm]{Lemma-Definition}
\newtheorem{prop-defn}[thm]{Proposition-Definition}
\newtheorem{thm-defn}[thm]{Theorem-Definition}
\newtheorem{assump}[thm]{Assumption}

\newtheorem*{thm*}{Theorem}

\numberwithin{equation}{section}

\def\R{{\mathbb R}}

\def\E{{\mathbb E}}
\def\Z{{\mathbb Z}}
\def\C{{\mathbb C}}
\def\R{{\mathbb R}}
\def\P{{\mathbb P}}

\def\N{{\mathbb N}}

\def\11{{\mathbb I}}

\def\sgn{{\text{\rm sgn}}}

\def\C{\mathbb{C}}
\def\Z{\mathbb{Z}}

\def\T{\mathbb{T}}

\def\Q{\mathbb{Q}}

\def\E{\ifmmode{\mathbb E}\else{$\mathbb E$}\fi} %natural numbers
\def\N{\ifmmode{\mathbb N}\else{$\mathbb N$}\fi} %natural numbers
\def\R{\ifmmode{\mathbb R}\else{$\mathbb R$}\fi} %real numbers
\def\Q{\ifmmode{\mathbb Q}\else{$\mathbb Q$}\fi} %rational numbers
\def\C{\ifmmode{\mathbb C}\else{$\mathbb C$}\fi} %complex numbers
\def\H{\ifmmode{\mathbb H}\else{$\mathbb H$}\fi} %complex numbers
\def\Z{\ifmmode{\mathbb Z}\else{$\mathbb Z$}\fi} %integers
\def\P{\ifmmode{\mathbb P}\else{$\mathbb P$}\fi} %real numbers
\def\SS{\ifmmode{\mathbb S}\else{$\mathbb S$}\fi} %real numbers
\def\DD{\ifmmode{\mathbb D}\else{$\mathbb D$}\fi} %real numbers

\def\R{{\mathbb R}}

\def\E{{\mathbb E}}
\def\Z{{\mathbb Z}}
\def\C{{\mathbb C}}
\def\R{{\mathbb R}}

\def\N{{\mathbb N}}
\def\CK{{\mathcal K}}

\def\darr#1{\raise1.5ex\hbox{$\leftrightarrow$}
\mkern-16.5mu #1}

\def\roughly#1{\raise.3ex\hbox{$#1$\kern-.75em
\lower1ex\hbox{$\sim$}}}

\def\opname#1{\mathop{\kern0pt{\rm #1}}\nolimits}

\def\dim{\opname{dim}}

\def\Incl{\operatorname{Incl}}
\def\Eval{\operatorname{Eval}}

\begin{document}

\quad \vskip1.375truein

\bibliographystyle{plain}

%--------Meta Data: Fill in your info------
\title[{Kuranishi chart categories and higher cocycle conditions}]
{Kuranishi chart categories and higher cocycle conditions}

\author{Taesu Kim}
\address{}
\thanks{}

%\date{March, 2021; Revised in April, 2021}
\begin{abstract}
Given an $L_\infty$-Kuranishi space introduced in \cite{Kim1}, we propose a notion called the Kuranishi chart category. Using the nerve of this category, together with a choice of atlas and a simplicial description of the covering of the underlying topological space, we formulate a higher homotopical version of the bundle-component cocycle condition. We show that this condition is always satisfied, by virtue of a property of the higher homotopy theory of $L_\infty[1]$-morphisms developed in \cite{Kim2}, concerning quasi-isomorphisms. As a consequence, the rigid cocycle condition of Fukaya-Oh-Ohta-Ono Kuranishi spaces is replaced by more flexible, homotopy-theoretic compatibility.
\end{abstract}

\keywords{$L_{\infty}[1]$-algebras, $L_{\infty}$-Kuranishi spaces, Kuranishi chart categories, Simplicial sets, Hypercoverings, Higher homotopies, Higher cocycle conditions}
\subjclass[2020]{Primary 18N50; Secondary 55U35, 53D35}

\maketitle

\tableofcontents

\section{Introduction}
In \cite{Kim1}, a new definition of Kuranishi spaces associated with $L_{\infty}[1]$-structures is introduced, providing the existing theory with a flexible point of view based on $L_{\infty}[1]$-homotopy theory. There, the cocycle condition for the coordinate changes is imposed only on the base maps, and not on the bundle component, or the $L_{\infty}[1]$-component  (cf. \cite[Remark 3.2 (iii)]{Kim1}). In relation to this point, we can demonstrate that the coordinate changes in \cite{Kim1} admit yet another layer of flexibility in terms of higher homotopy theory, and we declare that $L_{\infty}$-compatibilities always hold. The purpose of this paper is to make these arguments precise and rigorous.

We first briefly recall the definitions of the Fukaya-Oh-Ohta-Ono charts and coordinate changes following \cite{FOOO1}. Let $X$ be a compact metrizable space. To each point $p \in X$, we assign a chart $\mathscr{U}_p := (U_p, E_p, s_p, \Gamma_p, \psi_p)$, where $\pi: E_p \rightarrow U_p$ is a vector bundle over a smooth manifold $U_p$ and $s_p$ is its section, and suppose that a finite group $\Gamma_p$ acts on this data. Then there exists a neighborhood of each point in $X$ that is homeomorphic to the quotient $s_p^{-1}(0)/\Gamma_p$.

To each pair of points $p , q \in X,$ with $q \in X$ and $p \in \mathrm{Im}\psi_q,$ an FOOO \textit{coordinate change} from $\mathscr{U}_p$  to  $ \mathscr{U}_q$ is given by a tuple $\Phi_{pq}=(U_{pq} , \phi_{pq}, \widetilde{\phi}_{pq}).$ Here $U_{pq} \subset U_p$ is an open subset, while $\phi_{pq}: U_{pq} \hookrightarrow U_q$ is an embedding and $\widetilde{\phi}_{pq} :  E_p|_{U_{pq}} \hookrightarrow E_q$ is a bundle embedding. They are required to satisfy natural compatibility conditions with respect to the sections $s_p, s_{q}$ and the Kuranishi maps $\psi_p, \psi_q,$ together with the tangent bundle condition (cf. \cite[Definition 3.2]{FOOO1} and \cite[Definition 2.27]{Kim1}).

This definition, however, has been a source of rigidity, preventing the theory from admitting a categorical structure. In \cite{Kim1}, to resolve this, the bundle component is replaced by an $L_{\infty}[1]$-quasi-isomorphism. In particular, the tangent bundle condition yields the quasi-isomorphism property. Moreover, the Whitehead theorem for $L_{\infty}[1]$-algebras proves useful in dealing with several technical issues therein.

In this paper, we further claim that the cocycle condition for the FOOO coordinate changes can be replaced by a relaxed version accordingly. Recall that to obtain the Fukaya-Oh-Ohta-Ono Kuranishi space, we impose the cocycle condition on a collection of charts and their coordinate changes:
\[
\Phi_{pr}|_{U_{pqr}} = \Phi_{qr} \circ \Phi_{pq}|_{U_{pqr}}
\]
for $q \in \mathrm{Im}\, \psi_p$, $r \in \psi_q \big(s_q^{-1}(0) \cap U_{qr}\big)$, and the commonly defined region $U_{pqr}$. In particular, the bundle component corresponds to the equation between the two bundle morphisms
\begin{equation}\label{agjalksfdlks}
\widetilde{\phi}_{pr}|_{U_{pqr}} = \widetilde{\phi}_{qr} \circ \widetilde{\phi}_{pq}|_{U_{pqr}}.
\end{equation}
We can relax this rigid notion of an on-the-nose cocycle condition to a higher-homotopical counterpart. Roughly speaking, (\ref{agjalksfdlks}) is rewritten as an equality between $L_{\infty}$-morphisms up to filling homotopies.

To make the above statement rigorous, we employ the higher homotopy theory of $L_{\infty}[1]$-morphisms developed in \cite{Kim2}. More concretely, when patching the $L_{\infty}$-information at each chart for an $L_{\infty}$-Kuranishi atlas/space, we need a framework for systematically handling multiple intersections of Kuranishi charts together with coordinate changes, their homotopies, homotopies of homotopies, and so on. We denote by $N\left(\widehat{\mathcal{U}}\right)_{\bullet}$ the simplicial set that naturally captures the intersection data among the bases of the charts for a given atlas $\widehat{\mathcal{U}}$, by assigning a simplex $\alpha$ indexing the intersected open subset $U_{\alpha}$. A \textit{Kuranishi hypercovering} is then defined as the hypercovering that this simplicial set induces on the topological space covered by the atlas $\widehat{\mathcal{U}}$.

We then associate an $L_{\infty}$-Kuranishi space $\mathfrak{X}=\left(X, \left[\widehat{\mathcal{U}}\right]\right)$ with a simplicially enriched category $\mathcal{K}_{\mathfrak{X}}$ called \textit{the chart category} of ${\mathfrak{X}}$ whose objects are given by all Kuranishi charts that belong to an atlas of the same equivalence class as $\left[\widehat{\mathcal{U}}\right].$ For a pair $\mathcal{U}_p$ and $\underline{\mathcal{U}}_q \in \text{Ob}(\mathcal{K}_{\mathfrak{X}}),$ with $\mathrm{Im}\psi_p \cap \mathrm{Im}\psi_q \neq \emptyset,$ their morphism space is given by
\begin{equation}\nonumber
\text{Mor}_{\mathcal{K}_{\mathfrak{X}}}(\mathcal{U}_p, \underline{\mathcal{U}}_q):= \coprod\limits_{k=0}^{\infty} M_{pq}^k,
\end{equation}
where $M_{pq}^k$'s are roughly speaking a compatible family of $k$-standard simplex parameterized coordinate changes. (See Subsection 4.3. for the precise description.)

There exists a filtration-like structure $\left\{N_{\bullet}\left(\mathcal{K}_{\mathfrak{X}}^{(l)}\right)\right\}_l$ of $N_{\bullet}\left(\mathcal{K}_{\mathfrak{X}}\right)$ according to the base dimension of each chart, together with the naturally defined embedding of simplicially enriched categories 
\[
\mathscr{I}^{(l)} : N_{\bullet}\left(\mathcal{K}_{\mathfrak{X}}^{(l)}\right), \hookrightarrow N_{\bullet}\left(\mathcal{K}_{\mathfrak{X}}^{(l+1)}\right)
\] 
for each $l.$ The higher compatibility is then written as a map from the simplicial set  $N\left(\widehat{\mathcal{U}}\right)_{\bullet}$ to the simplicial nerve construction of $\mathcal{K}_{\mathfrak{X}}$. For $l \geq 1$, we consider a simplicial map
\begin{equation}\nonumber
\mathscr{G}_{\bullet}^{(l)} : N\left(\widehat{\mathcal{U}}\right)_{\bullet} \rightarrow N_{\bullet}\left(\mathcal{K}_{\mathfrak{X}}^{(l)}\right),
\end{equation}
and call it a \textit{higher cocycle condition} of the Kuranishi space $\mathfrak{X}$ if the family satisfies the following compatibilities:
\begin{enumerate}[label=(\roman*)]
\item $\mathscr{G}^{(l)}_{m-1}(\partial_j \alpha)\big|_{U_{\alpha}} = \partial_j \mathscr{G}^{(l)}_m(\alpha)$, $j = 0, \cdots, m$,
\item $\mathscr{G}^{(l)}_{m+1}(\sigma_j \alpha) = \sigma_j\mathscr{G}^{(l)}_m(\alpha)$, $j = 0, \cdots, m$,
\item $\mathscr{G}^{(l+1)}_{\bullet} = \mathscr{I}^{(l)} \circ \mathscr{G}^{(l)}_{\bullet}$.
\end{enumerate}

Indeed, the homotopy theory of $L_{\infty}[1]$-morphisms becomes virtually trivial with respect to quasi-isomorphisms, in the sense that any simplex of intersecting charts whose vertices are assigned quasi-isomorphisms can be filled by a higher homotopy (cf.\ Corollary \ref{anhp}). By virtue of this fact, the family $\left\{\mathscr{G}_{\bullet}^{(l)}\right\}_{l \geq 0}$ can be constructed recursively, and we have:

\begin{thm}[Higher cocycle conditions]
Given a Kuranishi space and a choice of its atlas, higher cocycle conditions always exist.
\end{thm}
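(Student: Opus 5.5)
The plan is to build the maps $\mathscr{G}^{(l)}_\bullet$ by a double induction: an outer induction on $l$, the filtration level given by base dimension of the charts, and an inner induction on the simplicial degree $m$ of $N\left(\widehat{\mathcal{U}}\right)_\bullet$, i.e.\ skeleton by skeleton. In degree $0$, each vertex $p$ of $N\left(\widehat{\mathcal{U}}\right)_0$ is sent to the chart $\mathcal{U}_p$, viewed as an object of $\mathcal{K}^{(l)}_{\mathfrak{X}}$ for $l$ large enough to contain it. In degree $1$, each edge $\alpha=(p\to q)$ with $U_\alpha\neq\emptyset$ is sent to the coordinate change $\Phi_{pq}$ of the atlas. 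This is an element of $M^0_{pq}$: its base map is an embedding and its bundle component is an $L_\infty[1]$-quasi-isomorphism, by the definition of $L_\infty$-Kuranishi atlases in \cite{Kim1}. Degenerate $1$-simplices $\sigma_0 p$ are sent to identities, which makes condition (ii) hold in low degree.

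For the inductive step, suppose $\mathscr{G}^{(l)}$ has been defined on the $(m-1)$-skeleton, compatibly with faces, degeneracies and restrictions. If $\alpha$ is degenerate, $\alpha=\sigma_j\beta$, we are forced to set $\mathscr{G}_m(\alpha)=\sigma_j\mathscr{G}_{m-1}(\beta)$. Well-definedness follows from the Eilenberg--Zilber lemma, since every degenerate simplex is uniquely $s_I\beta$ with $\beta$ nondegenerate. If $\alpha$ is nondegenerate, the already chosen values $\mathscr{G}_{m-1}(\partial_j\alpha)|_{U_\alpha}$ for $j=0,\dots,m$ fit together into a map $\partial\Delta^m\to N_\bullet(\mathcal{K}^{(l)}_{\mathfrak{X}})$ over $U_\alpha$. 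Because the simplicial nerve is adjoint to $\mathfrak{C}$, this is a compatible family of simplex-parameterized coordinate changes in the spaces $M^k_{pq}$, for $k\le m-2$. The underlying base maps are strictly compatible, since the base cocycle condition of \cite{Kim1} holds on $U_\alpha$, and all the $L_\infty[1]$-components at the vertices are quasi-isomorphisms. Corollary \ref{anhp} then says that any such boundary datum whose vertices are quasi-isomorphisms can be filled by a higher homotopy. This gives an element of $M^{m-1}_{p_0p_m}$ extending the boundary, and we set $\mathscr{G}_m(\alpha)$ to be it. Conditions (i) and (ii) hold by construction.

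For condition (iii), we construct $\mathscr{G}^{(l)}$ first and define $\mathscr{G}^{(l+1)}:=\mathscr{I}^{(l)}\circ\mathscr{G}^{(l)}$ on simplices whose charts already lie at level $\le l$. The induction at level $l+1$ is then only run on the new simplices, which involve a chart of base dimension $l+1$, relative to the part already fixed. This is a relative version of the same skeletal extension. It works because $\mathscr{I}^{(l)}$ is an embedding of simplicially enriched categories, so fillings chosen at level $l$ remain valid fillings at level $l+1$. Compactness of $X$ makes the atlas finite, so each level needs only finitely many choices.

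The main obstacle I expect is checking that the boundary data really meet the hypotheses of Corollary \ref{anhp}. The faces are defined on larger opens $U_{\partial_j\alpha}$ and have to be restricted to $U_\alpha$. One must check that restriction is functorial on the $M^k$ and commutes with the face maps of the nerve, and that the homotopies obtained from fillings remain quasi-isomorphism-valued, so that the next inductive step can apply Corollary \ref{anhp} again. I would try first to show that quasi-isomorphism is a property of vertices alone: any $k$-simplex of homotopies whose vertices are quasi-isomorphisms has all its evaluations quasi-isomorphisms, by the Whitehead theorem for $L_\infty[1]$-algebras.
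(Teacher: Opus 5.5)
Your skeleton-by-skeleton construction is the paper's. Vertices go to charts, edges go to the atlas coordinate changes extended constantly in the simplex variable, and higher simplices are filled because data whose vertices are $L_\infty[1]$-quasi-isomorphisms can always be filled. The filling step, however, has a genuine gap on the base side, and your treatment of condition (iii) departs from Definition \ref{defhtpgl}.

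\textbf{The base component.} You claim the base maps are strictly compatible on $U_\alpha$ by the base cocycle condition. But Definition \ref{kstr}(iii) imposes $\phi_{qr}\circ\phi_{pq}=\phi_{pr}$ only on the zero locus $s_p^{-1}(0)\cap\cdots$, not on the open set.
\begin{itemize}
\item A $2$-simplex of $N(\mathcal{K}_{\mathfrak{X}})$ needs an element of $M^1_{pr}$. Its base component is a smooth map $\mathscr{W}\times\Delta^1\to U_r$ whose endpoint restrictions are $\Phi_{qr}\circ\Phi_{pq}$ and $\Phi_{pr}$ on all of $\mathscr{W}$. Higher simplices need the corresponding higher base homotopies.
\item Corollary \ref{anhp} concerns $L_\infty[1]$-morphisms only and provides none of this.
\item The missing ingredient is Assumption \ref{contrassmp}. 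In the $m=2$ and $m=3$ steps of the proof of Theorem \ref{htpyexist}, the paper obtains the base homotopy $\Phi^0_\alpha$ from the contractibility of $\overline{U}_\alpha$.
\item The same assumption is what makes $U_\alpha$ an admissible domain at all, since $\mathscr{S}(\cdot,\cdot)$ consists of contractible open sets. Without it, restricting the face data to $U_\alpha$ does not even land in the $M^k$.
\end{itemize}

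\textbf{The $L_\infty[1]$ component.} Extending a prescribed boundary is, strictly speaking, the relative statement Proposition \ref{pphhe}, not Corollary \ref{anhp}. Its hypothesis concerns only the vertex morphisms. In the nerve these are composites of restricted coordinate changes, hence quasi-isomorphisms. So the quasi-isomorphism part of the obstacle you anticipate does not arise. In any case, each $\Eval$ is a quasi-isomorphism, so by two-out-of-three all vertex evaluations of a homotopy are quasi-isomorphisms once one is; no Whitehead theorem is needed.

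\textbf{Condition (iii).} In Definition \ref{defhtpgl}, every $\mathscr{G}^{(l)}_\bullet$ is defined on all of $N(\widehat{\mathcal{U}})_\bullet$. Condition (iii) requires $\mathscr{G}^{(l+1)}(\alpha)=\mathscr{I}^{(l)}\mathscr{G}^{(l)}(\alpha)$ for \emph{every} $\alpha$.
\begin{itemize}
\item There are therefore no ``new simplices'' to fill at level $l+1$. Any simplex you did fill there, using non-identity morphisms of a chart of base dimension $l+1$, would lie outside the image of $\mathscr{I}^{(l)}$ and violate (iii).
\item The real difficulty is at small $l$. In $\mathcal{K}^{(l)}_{\mathfrak{X}}$, a chart of base dimension $>l$ has no morphisms except its identity. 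So ``$p\mapsto\mathcal{U}_p$ for $l$ large enough'' leaves $\mathscr{G}^{(l)}$ undefined on edges at the remaining levels.
\item The paper's proof instead keeps every $\mathscr{G}^{(l)}$ globally defined. At $m=0$ it substitutes a chart $\overline{\mathcal{U}}_p$ from an equivalent atlas with minimal $\dim\overline{U}_p$. It then runs the same construction for each $l$ and reads (iii) off the definition of $\mathscr{I}^{(l)}$.
\end{itemize}
Your filtered scheme with partially defined maps proves a reformulated statement, not (iii) as written.

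Finally, compactness does not make the atlas finite here, since $N(\widehat{\mathcal{U}})_0=X$ carries one chart per point. Finiteness is not needed anyway, because the choices are made simplex by simplex.
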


The cost of this approach is that we must explicitly assign a filling homotopy to each simplex to complete the picture. Thus, the $L_{\infty}[1]$-compatibilities are governed by data rather than conditions.

We outline the structure of this paper. Section 2 reviews the content of \cite{Kim1}. Namely, the definition of $L_{\infty}$-Kuranishi spaces and their category is introduced. Section 3 is devoted to the construction of the hypercoverings and the Kuranishi chart category associated to an $L_{\infty}$-Kuranishi space/atlas. In Section 4, we rigorously formulate the cocycle conditions for the $L_{\infty}[1]$-component of coordinate changes. In Appendix A, we briefly recall the higher homotopy theory of $L_{\infty}[1]$-morphisms as developed in \cite{Kim2}. In Appendix B, we revisit a more detailed description of the structures placed on neighborhoods of the zero locus following \cite{Kim1}.

\section{$L_{\infty}$-Kuranishi spaces and their category}

In this section, we recall the definition of $L_{\infty}$-Kuranishi spaces in \cite{Kim1} and show that their collection form a category where the category of smooth manifolds embeds.

\subsection{$L_{\infty}$-Kuranishi charts}

We define Kuranishi charts by associating $L_{\infty}[1]$-algebras to presymplectic neighborhoods of the zero locus of the Kuranishi section.

\begin{defn}[$L_\infty$-Kuranishi charts]\label{kurdef}
Let $X$ be a compact metrizable space. An $L_\infty$-\textit{Kuranishi chart} of $X$ is given by a tuple
\begin{equation}\nonumber
\mathcal{U} = (U, E, s, \Gamma, \psi),
\end{equation}
where
\begin{enumerate}
\item[--] $U = (U, \beta)$ is a pair of a smooth manifold with a closed two-form $\beta \in Z^2(U).$
\item[--] $\pi : E \rightarrow U$ is a (finite rank) vector bundle.
\item[--] $s : U \rightarrow E$ is a smooth section.
\item[--] $\Gamma$ is a finite group acting on $U$ that restricts to the zero set of $s$, that is, $\Gamma \cdot s^{-1}(0) \subset s^{-1}(0).$
\item[--] $\psi : {s^{-1}(0)}/{\Gamma} \overset{\simeq}{\hookrightarrow} X$ is a homeomorphism onto the image.
\end{enumerate}
The \textit{dimension} of $\mathcal{U}$ is defined by $\dim \mathcal{U} := \dim U - \text{rk}E.$

We require that the chart  $\mathcal{U}$ be endowed with the following structures:
\begin{itemize}
\item[--] $U$ has a decomposition 
\begin{equation}\label{wstri}
U = \bigcup\limits_i \mathcal{S}_i,
\end{equation}
into (possibly non-connected) submanifolds,
\[
\mathcal{S}_i := \{x \in U \mid \text{rk} (\ker\beta_x) =i \}, \ 0 \leq i \leq \dim U,
\]
together with their tubular neighborhoods for each $i$.

\item[--] To each zero point $x \in s^{-1}(0),$ we assign:
\begin{enumerate}[label = (\roman*)]
\item A presymplectic open neighborhood $W_x$ of $x$ in $U$ with $W_x \simeq B^n,$
\item A local $ L_{\infty}[1]$-algebra $\mathcal{C}_x$
\[
\mathcal{C}_x := \overbrace{\bigwedge\nolimits^{-\bullet}\Gamma(E^*|_{W_x})}^{\text{Koszul}} \oplus \overbrace{\Omega^{\bullet + 1}_{\mathrm{aug}}(\mathcal{F}_x)}^{\text{de Rham}},
\]
\end{enumerate}
whose detailed descriptions are provided in Appendix B.
\end{itemize}
\end{defn}

\subsection{Morphisms of charts and Kuranishi atlases}

We define morphisms of Kuranishi charts, with embeddings as a special case, which yields the construction of a global object called a Kuranishi atlas.

Let $f : X \rightarrow X'$ be a continuous map between compact topological spaces. 

\begin{defn}[Chart morphisms]
A \textit{chart morphism} between $L_{\infty}$-Kuranishi charts ${\mathcal{U}} = (U,E,\Gamma,s,\psi)$ and ${\mathcal{U}}' = (U',E',\Gamma',s',\psi')$ of $X$ and $X',$ respectively, is defined by a pair
\[
\Phi = \left(\phi, \widehat{\phi}\right) : {\mathcal{U}} \rightarrow {\mathcal{U}}' ,
\]
given by:
\begin{enumerate}
\item[--] $\phi : U \rightarrow U', \text{ a } (\Gamma, \Gamma')\text{-equivariant map of manifolds,}$
\item[--] $\widehat{\phi} = \left\{\widehat{\phi}_x :  \mathcal{C}'_{\phi(x)} \rightarrow \mathcal{C}_x \right\}_{x \in s^{-1}(0)}$ is a family of $L_{\infty}[1]$-morphisms,
\end{enumerate}
satisfying 
\begin{enumerate}[label = (\roman*)]
\item $\psi' \circ \phi = f \circ \psi$ on $s^{-1}(0),$
\item $\phi(W_x) \subset W'_{\phi(W_x)},$
\item $\widehat{\phi}_x$ factors through $\mathcal{C}'_{\phi(x), \phi},$ that is, we have $\widehat{\phi}_x = \widehat{\phi}_x^{\mathrm{c}} \circ \widehat{\varepsilon}_{\phi(x), \phi}$ for some $L_{\infty}[1]$-morphism, $\widehat{\phi}^{\mathrm{c}}_x :  \mathcal{C}'_{\phi(x), \phi} \rightarrow \mathcal{C}_x.$
\end{enumerate}
Here, $\mathcal{C}_{\phi(x),\phi}'$ stands for \textit{the completion} of $\mathcal{C}'_{\phi(x)}$ \textit{at the image of} $\phi$, induced from the \textit{completed} V-algebra at the image (see Definition\ref{ladef} for details). We remark that for surjective $\phi$, we have an isomorphism $\mathcal{C}'_{\phi(x), \phi} \simeq \mathcal{C}'_{\phi(x)}$ (cf.\ Lemma \ref{surjcp}).

On the other hand, $\widehat{\varepsilon}_{\phi(x), \phi} : \mathcal{C}'_{\phi(x)} \rightarrow \mathcal{C}'_{\phi(x), \phi}$ is the $L_{\infty}[1]$-morphism given by considering elements in $\mathcal{C}'_{\phi(x)}$ naturally as those in $\mathcal{C}'_{\phi(x), \phi}$, which can be easily shown to define an $L_{\infty}[1]$-morphism. (See Definition \ref{varep} and Lemma \ref{vecpf}.)
\end{defn}

\begin{defn}[Embedding of charts]\label{ourcemb}
A chart morphism $\left(\phi, \widehat{\phi}\right)$ is called an \textit{embedding} if 
\begin{enumerate}
\item[--]$\phi$ is an (equivariant) embedding of manifolds, 
\item[--] $\widehat{\phi}^{\mathrm{c}}_{x} : \mathcal{C}'_{\phi(x), \phi} \rightarrow \mathcal{C}_x$ is a quasi-isomorphism of $L_{\infty}[1]$-algebras for each $x.$
\end{enumerate}
\end{defn}

A \textit{coordinate change} of $L_{\infty}$-Kuranishi charts provides a main example of embedding:

\begin{defn}[Coordinate changes]\label{kstr}
For two points $p, q \in X$ with $\mathrm{Im}\psi_p \cap \mathrm{Im}\psi_q \neq \emptyset,$ we define a \textit{coordinate change} $\Phi_{pq} : \mathcal{U}_p \rightarrow \mathcal{U}_q$ by a tuple
\[
\Phi_{pq} := \left(U_{pq}, \phi_{pq}, \left\{\widehat{\phi}_{pq,x}\right\}\right),
\]
where $U_{pq} \subset U_p$ is an open submanifold, and
\[
\left(\phi_{pq}, \widehat{\phi}_{pq}\right) : \mathcal{U}_p|_{U_{pq}} \rightarrow \mathcal{U}_q
\]
is an embedding of $L_{\infty}$-Kuranishi charts from $\mathcal{U}_p|_{U_{pq}},$ that is, the chart restricted to $U_{pq}.$ They are required to satisfy:

\begin{enumerate}[label = (\roman*)]
\item $\Phi_{pp} = \mathrm{id}_{\mathcal{U}_p},$
\item $\psi_q \circ \phi_{pq} = \psi_p$ on $s_{p}^{-1}(0) \cap U_{pq},$
\item $\phi_{qr} \circ \phi_{pq} = \phi_{pr}$ on $s_p^{-1}(0) \cap \phi_{qr}^{-1}(U_{pq}) \cap U_{pr},$
\item $\psi_p\big(s_p^{-1}(0) \cap U_{pq}\big) =\mathrm{Im} \psi_p \cap\mathrm{Im} \psi_q,$
\end{enumerate}
\end{defn}

\begin{rem}
Here the cocycle condition is imposed {on the base maps alone,} and not on the $L_{\infty}[1]$-component. The reason for this is that $L_{\infty}$-compatibilities always hold. 
\end{rem}

\begin{defn}
A pair of the compact topological space $X$ and a collection of Kuranishi charts with coordinate changes
\[
\left(X, \widehat{\mathcal{U}}\right),
\]
where $ \widehat{\mathcal{U}}=\left(\left\{\widehat{\mathcal{U}}_p\right\}, \left\{\Phi_{pq}\right\}\right),$ is called an $L_{\infty}$\textit{-Kuranishi atlas}. For technical reasons, we assume that $\max\limits_{p \in X} \dim U_p < \infty$ with the compactness of $X$.
\end{defn}

\subsection{Definition of $L_{\infty}$-Kuranishi spaces and their categorical structures}

By considering equivalence relations on atlases, we define $L_{\infty}$-Kuranishi spaces. We also define morphisms between them, obtaining the category of $L_{\infty}$-Kuranishi spaces.

Two atlases are said to be equivalent, denoted $\left(X, \widehat{\mathcal{U}}\right) \sim \left(X, \widehat{\mathcal{U}}'\right)$, or simply $\widehat{\mathcal{U}} \sim \widehat{\mathcal{U}}'$, if we have an \textit{equality between the expanded atlases}
\[
\widehat{\mathcal{U}}^0_1 \times \mathbb{R}^{m_1} = \widehat{\mathcal{U}}^0_2 \times \mathbb{R}^{m_2},
\]
for some $m_1, m_2 \geq 0$, where $\widehat{\mathcal{U}}^0_i$ is an open subcharts of $\widehat{\mathcal{U}}_i, \ i = 0,1.$ Since we are mostly interested in the abstract categorical structure, we omit the full details regarding them. Their precise definitions can be found in \cite[Definition 3.7]{Kim1}. By \cite[Lemma 3.8 (iii)]{Kim1}, this defines an equivalence relation.

\begin{defn}[$L_{\infty}$-Kuranishi spaces]
We define an $L_{\infty}$-\textit{Kuranishi space} to be an equivalence class with respect to the relation $\sim$
\[
\mathfrak{X} := \left(X, \left[\widehat{\mathcal{U}}\right]\right).
\]
\end{defn}

Let $\mathfrak{X} = \left(X, \left[\widehat{\underline{\mathcal{U}}}\right]\right)$ and $\mathfrak{Y} = \left(Y,\left[\underline{\widehat{\mathcal{U}}}'\right]\right)$ be $L_{\infty}$-Kuranishi spaces.
We consider two atlases $\left(X, \widehat{\mathcal{U}}\right)$ and $\left(X', \widehat{\mathcal{U}}'\right)$ such that $\widehat{\mathcal{U}} \sim \underline{\widehat{\mathcal{U}}}$ and $\widehat{\mathcal{U}}' \sim \underline{\widehat{\mathcal{U}}}'$ with $\widehat{\mathcal{U}} = \left(\{\widehat{\mathcal{U}}_p\}, \left\{\Phi_{pq}\right\}\right) $ and $\widehat{\mathcal{U}}' = \left(\{\widehat{\mathcal{U}}'_{p'}\}, \left\{\Phi'_{p'q'}\right\}\right).$

\begin{defn}[Pre-morphisms]\label{morphismkur}
A \textit{pre-morphism} is defined by the following tuple
\[
\overline{F} := \left(\widehat{\mathcal{U}}, \widehat{\mathcal{U}}', f, \{f_p\}, \left\{\widehat{f}_{p,x}\right\}\right).
\]
$f : X \rightarrow Y$ is a continuous map between the zero loci, while $\left\{\left(f_p, \left\{\widehat{f}_{p,x}\right\}\right)\right\}$ is a collection of chart morphisms. Then $\overline{F}$ is required to satisfy the following compatibilities with respect to the coordinate change $\Phi_{pq}=\left(\phi_{pq}, \left\{\widehat{\phi}_{pq,x}\right\}\right)$:
For $p,q \in X$ with Im$\psi_p \cap\mathrm{Im} \psi_q \neq \emptyset,$ we require
\begin{enumerate}[label = (\roman*)]
\item $\phi'_{f(p)f(q)} \circ f_p = f_q \circ \phi_{pq}$ on $s_{p}^{-1}(0) \cap U_{pq},$
\item $\widehat{\phi}_{pq,x} \circ \widehat{f}_{q, \phi_{pq}(x)} = \widehat{f}_{p,x} \circ \widehat{\phi}'_{f(p)f(q),f_p(x)}$ for each $x \in s_p^{-1}(0) \cap U_{pq}$ up to $L_{\infty}[1]$-homotopy.
\end{enumerate}
\end{defn}

We can define an equivalence relation on the set of pre-morphisms as in \cite[Definition 3.12]{Kim1} and \cite[Lemma 3.13]{Kim1}. 

\begin{defn}[Morphism of Kuranishi spaces]
We define a \textit{morphism} from $\mathfrak{X} = \left(X, \left[\widehat{\mathcal{U}}\right]\right)$ to $\mathfrak{X}' = \left(X', \left[\widehat{\mathcal{U}}'\right]\right)$
by an equivalence class of a pre-morphism $\overline{F}$ from $\mathfrak{X}$ to $\mathfrak{X}':$
\[F := \left[\overline{F}\right] : \mathfrak{X} \rightarrow \mathfrak{X}'.\]
\end{defn}

\begin{defn}[Composition of morphisms]
Let $\mathfrak{X} = \left(X, \left[\widehat{\mathcal{U}}\right]\right),$ $\mathfrak{X}' = \left(X', \left[\widehat{\mathcal{U}'}\right]\right),$ and $\mathfrak{X}'' = \left(X'', \left[\widehat{\mathcal{U}''}\right]\right)$ be Kuranishi spaces. Let $F : \mathfrak{X} \rightarrow \mathfrak{X}'$ and $G : \mathfrak{X}' \rightarrow \mathfrak{X}''$ be morphisms between them represented by 
\begin{equation}\nonumber
\begin{cases}
\overline{F} = \left(\widehat{\mathcal{U}}, \widehat{\mathcal{U}'}, f, \left\{f_{p}\right\}, \left\{\widehat{f}_{p,x}\right\}\right),\\
\overline{G} = \left(\underline{\widehat{\mathcal{U}'}}, \widehat{\mathcal{U}''}, g, \left\{g_{f(p)}\right\}, \left\{\widehat{g}_{f(p),y}\right\}\right),
\end{cases}
\end{equation}
respectively with $\left[ \widehat{\mathcal{U}'}\right] = \left[\underline{\widehat{\mathcal{U}'}}\right]$.

In fact, we may assume that ${\widehat{\mathcal{U}'}} = \underline{\widehat{\mathcal{U}'}}$ and that $f_{p}$ is surjecive for each $p$; if not, we take the \textit{extension} of the pre-morphisms having equivalent charts as components. (See \cite[(3.6) \& Definition 3.15]{Kim1} for its precise definition). We then define the \textit{composition} $G \circ F$ to be the following equivalence class:
\begin{equation}\label{morcom}
G \circ F := \left[\left(\widehat{\mathcal{U}}, \widehat{\mathcal{U}}'', g\circ f, \left\{{g}_{f(p)} \circ {f}_{p}\right\}, \left\{{\widehat{f}}_{p,x} \circ {\widehat{g}}_{f(p),f_p(x)}\right\} \right)\right].
\end{equation}
\end{defn}

\begin{prop}\cite[Proposition 3.16]{Kim1}\label{ppidx}
The composition is well-defined and associative with the identity given by
\begin{equation}\label{idxm}
\mathrm{id}_{\mathfrak{X}}:= \left[\left(\widehat{\mathcal{U}}, \widehat{\mathcal{U}}, \mathrm{id}_X, \left\{\mathrm{id}_p\right\}, \left\{\widehat{\mathrm{id}}_{p,x}\right\}\right)\right]
\end{equation}
of each $\mathfrak{X} = \left(X, \left[\widehat{\mathcal{U}}\right]\right).$
\end{prop}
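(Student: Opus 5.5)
The plan is to verify three things in turn: that the tuple in (\ref{morcom}) is again a pre-morphism in the sense of Definition \ref{morphismkur}; that its equivalence class does not depend on the representatives $\overline{F}$, $\overline{G}$ or on the extensions used to arrange $\widehat{\mathcal{U}'} = \underline{\widehat{\mathcal{U}'}}$ with $f_p$ surjective; and finally associativity and the identity laws.

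\emph{The composite is a pre-morphism.}
\begin{enumerate}[label=(\alph*)]
\item The maps $g_{f(p)}\circ f_p$ are $(\Gamma_p,\Gamma''_{gf(p)})$-equivariant, since equivariance composes.
\item The Kuranishi-map condition $\psi''\circ g_{f(p)}\circ f_p = (g\circ f)\circ \psi_p$ follows by pasting the two commutative squares for $f_p$ and $g_{f(p)}$.
\item The condition on the neighbourhoods $W_x$ is transitive, so it also holds for the composite.
\item The factorization condition (iii) uses surjectivity of $f_p$. By Lemma \ref{surjcp}, the completion of $\mathcal{C}''$ at the image of $g_{f(p)}\circ f_p$ coincides with the completion at the image of $g_{f(p)}$. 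Hence $\widehat{f}_{p,x}\circ \widehat{g}_{f(p),f_p(x)}$ factors through $\widehat{\varepsilon}$ via $\widehat{f}_{p,x}\circ \widehat{g}^{\mathrm{c}}_{f(p),f_p(x)}$. Here one checks, using Definition \ref{varep} and Lemma \ref{vecpf}, that the natural maps $\widehat{\varepsilon}$ are compatible with this identification.
\item The base compatibility (i) with coordinate changes again follows by pasting squares.
\item Compatibility (ii) holds only up to $L_\infty[1]$-homotopy. I would rewrite $\widehat{\phi}_{pq,x}\circ\widehat{f}_{q}\circ\widehat{g}_{f(q)}$ first via the homotopy supplied by $\overline{F}$ and then via the one supplied by $\overline{G}$. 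This uses that $L_\infty[1]$-homotopy is an equivalence relation preserved by pre- and post-composition with $L_\infty[1]$-morphisms, which is available from the homotopy theory recalled in Appendix A (from \cite{Kim2}).
\end{enumerate}

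\emph{Independence of choices.} This is the step I expect to be the main obstacle. I would first show that replacing $\overline{F}$ by an extension in the sense of \cite[Definition 3.15]{Kim1}, that is, by expanded or open-subchart atlases, changes the composite only by an extension. The point is that restricting to open subcharts and taking products with $\mathbb{R}^m$ commute with composition of base maps and of $L_\infty[1]$-morphisms, and that the Koszul and de Rham parts of the $\mathcal{C}_x$ are functorial under these operations. Then, given two pairs of equivalent pre-morphisms, I would pass to a common refinement of the atlases, where both composites are represented and become equal up to homotopy. The equivalence relation of \cite[Definition 3.12]{Kim1} is generated by such refinements together with homotopies of the $L_\infty[1]$-components, and both of these are respected by composition. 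The delicate part is tracking that common refinements can be chosen so that surjectivity of the middle chart maps is preserved.

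\emph{Associativity and identity.} Associativity follows directly: on representatives with compatible middle atlases, composition of continuous maps, smooth maps and $L_\infty[1]$-morphisms is strictly associative. The only order reversal is in the $L_\infty$-components, where $\widehat{f}\circ(\widehat{g}\circ\widehat{h}) = (\widehat{f}\circ\widehat{g})\circ\widehat{h}$ holds on the nose. The identity laws hold because composing with $\left(\mathrm{id}_X,\{\mathrm{id}_p\},\{\widehat{\mathrm{id}}_{p,x}\}\right)$ returns the same tuple. The identity charts are surjective, so no extension is needed, and by Lemma \ref{surjcp} the completion is trivial. This shows that (\ref{idxm}) is a two-sided identity.
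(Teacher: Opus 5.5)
\paragraph{Verdict.} The paper gives no proof of Proposition~\ref{ppidx}. It is quoted from \cite[Proposition 3.16]{Kim1}, and the two notions its proof actually rests on are not reproduced here: the equivalence relation on pre-morphisms \cite[Definition 3.12]{Kim1} and the extension construction \cite[(3.6), Definition 3.15]{Kim1}. So there is nothing internal to compare against, and your proposal has to stand on its own. Your check that (\ref{morcom}) is again a pre-morphism is fine in outline. So is your reduction of associativity and the unit laws to strict associativity of composing representatives. But the step you yourself flag, independence of choices, is the real content of ``well-defined'', and what you give there is a plan, not an argument.

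\paragraph{The gap: independence of choices.} You assert that passing to open subcharts and expanding by $\mathbb{R}^m$ ``commute with composition''. For the $L_\infty[1]$-components this fails on the nose.
\begin{itemize}
\item By Lemma~\ref{itavsteai}, the local algebra of an open subchart is related to that of the ambient chart only through the completion $\mathcal{C}'_{o(x),o}$ and a quasi-isomorphism $\widehat{o}_x$. It is not an identification.
\item For the expansions by $\mathbb{R}^m$, whose local algebras the paper does not describe at all, you have not said what ``functorial'' means.
\item An extension replaces charts of the middle atlas in exactly these ways, as you say yourself. So it changes $\widehat{f}_{p,x}$ and $\widehat{g}_{f(p),y}$ by composition with such maps, and the composite in (\ref{morcom}) is recovered only up to $L_\infty[1]$-homotopy.
\item Whether that is harmless depends on the relation of \cite[Definition 3.12]{Kim1} absorbing such homotopies. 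You assume this (``generated by refinements together with homotopies'') rather than derive it.
\item You also leave open the existence of a common refinement on which both pairs of representatives live, with surjective middle chart maps and matching middle atlas. That is precisely what well-definedness needs.
\item The same issue affects the unit. You need identity pre-morphisms on equivalent atlases to be equivalent. For $\mathrm{id}_{\mathfrak{X}'}\circ F$ you must first extend $F$ to make $f_p$ surjective, so ``returns the same tuple'' holds only modulo this equivalence.
\end{itemize}
As written, you have an associative, unital composition of suitably compatible representatives, not of classes.

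\paragraph{Corrections to the first part.}
\begin{itemize}
\item In (d), Lemma~\ref{surjcp} is not the relevant fact; it says that completing along a surjective map changes nothing. What you actually use is that the ideal $I_\phi$ of Definition~\ref{ladef} depends only on $\mathrm{Im}\,\phi$. Since $\mathrm{Im}(g_{f(p)}\circ f_p)=\mathrm{Im}\,g_{f(p)}$ when $f_p$ is onto, the two completions and the two maps $\widehat{\varepsilon}$ coincide literally. Hence $\widehat{f}_{p,x}\circ\widehat{g}^{\mathrm{c}}_{f(p),f_p(x)}$ is the required factor.
\item In (f), Appendix A does not supply what you cite. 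Proposition~\ref{pphhe} and Corollary~\ref{anhp} concern quasi-isomorphisms only, whereas $\widehat{f}_{p,x}$ and $\widehat{g}_{f(p),y}$ are arbitrary $L_\infty[1]$-morphisms. You need separately that homotopy, say through $\Omega^*(\Delta^1)\otimes C$, is transitive and stable under pre- and post-composition. This is standard, but it is not what this paper proves.
\end{itemize}
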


The above data give rise to a category denoted by $\mathbf{Kur}$ that consists of:
\begin{equation}\nonumber
\begin{cases}
\text{Ob}(\textbf{Kur}) =\{L_{\infty}\text{-Kuranishi spaces}\}\\
\text{Mor}(\textbf{Kur}) = \{\text{Equivalence classes of pre-morphisms} \}.
\end{cases}
\end{equation}

Indeed, $\mathbf{Kur}$ contains the category of smooth manifolds as a subcategory, allowing us to treat Kuranishi spaces and smooth manifolds on equal footing.
\begin{thm}\cite[Propositions 3.16 \& 3.18]{Kim1}
$L_{\infty}$-Kuranishi spaces form a category that admits a natural embedding from the category of smooth manifolds.
\end{thm}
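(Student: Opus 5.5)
The statement combines two things: Proposition \ref{ppidx}, which gives the categorical structure, and the embedding of smooth manifolds. The plan is to take the first as the category axioms and spend the work on constructing and checking the embedding functor $\iota : \mathbf{Man} \to \mathbf{Kur}$.

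\emph{Step 1: the category.} For the category itself, I would just cite Proposition \ref{ppidx}. It states that composition (\ref{morcom}) is well defined on equivalence classes of pre-morphisms and is associative, with identities (\ref{idxm}). Together with the obvious fact that pre-morphisms between fixed $\mathfrak{X}$ and $\mathfrak{X}'$ form a set, this gives the category $\mathbf{Kur}$.

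\emph{Step 2: the functor on objects.} To a smooth manifold $M$ (compact, say, or one restricts to compact ones to match the standing compactness of $X$), I assign the Kuranishi space $\iota(M) = (M, [\widehat{\mathcal{U}}_M])$. The atlas $\widehat{\mathcal{U}}_M$ is the single global chart $(M, 0, 0, \{1\}, \mathrm{id})$ with $\beta = 0$. Here $E$ is the zero bundle, so the Koszul part is just $C^\infty(W_x)$. The chart has all coordinate changes equal to the identity. For each $x$ I take a coordinate ball $W_x$, and the associated $\mathcal{C}_x$ is the local algebra from Appendix B.

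\emph{Step 3: the functor on morphisms.} A smooth map $f : M \to N$ is sent to the class of the pre-morphism $(\widehat{\mathcal{U}}_M, \widehat{\mathcal{U}}_N, f, \{f\}, \{\widehat{f}_x\})$. Here $\widehat{f}_x$ is pullback of functions and forms, $f^* : \mathcal{C}^N_{f(x)} \to \mathcal{C}^M_x$, which is a strict $L_\infty[1]$-morphism.

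I then have to check the three chart-morphism conditions. Condition (i) is trivial. For (ii), $W$ must be chosen compatibly, e.g. after shrinking, which should be allowed by the choice-independence built into the equivalence relation. Condition (iii), the factorization through the completion $\mathcal{C}'_{f(x),f}$, requires checking that pullback extends to the completed algebra at the image (Definition \ref{ladef}, Lemma \ref{vecpf}). Since all coordinate changes are identities, the compatibilities of Definition \ref{morphismkur} hold strictly.

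\emph{Step 4: functoriality and faithfulness.} Functoriality follows because $(g\circ f)^* = f^*\circ g^*$ strictly, matching the formula (\ref{morcom}), and because $\mathrm{id}^*$ is the identity, matching (\ref{idxm}).

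The main obstacle is the embedding claim, namely faithfulness and, if intended, fullness. For faithfulness I would show that $f$ is recovered from the equivalence class. The underlying continuous map $X\to Y$ is an invariant of the class, since the equivalences of \cite[Definition 3.12]{Kim1} do not alter $f$. So $\iota(f) = \iota(g)$ forces $f = g$. Injectivity on objects up to isomorphism would similarly use that $M$ is the zero locus. Fullness I would not expect to hold verbatim, so I would interpret ``embedding'' as a faithful functor. That is the version I would prove, citing \cite[Proposition 3.18]{Kim1} for the remaining details about expanded atlases $\times\mathbb{R}^m$ not creating new morphisms.
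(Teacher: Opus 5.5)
Your Step 1 matches the paper exactly. The paper gives no proof of its own: it cites \cite[Propositions 3.16 \& 3.18]{Kim1}, and the first of these is Proposition \ref{ppidx}. For the embedding the paper offers no argument, so Steps 2--4 are your own reconstruction. The object-level choice ($E=0$, trivial $\Gamma$, $\beta=0$) is sound: there is one stratum, $\mathcal{F}_x$ has the single leaf $W_x$, and the de Rham part is the augmented de Rham complex of a ball.

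Functoriality in Step 4, however, is not yet proved. The composition (\ref{morcom}) is only defined after two things have been arranged: the middle atlases must coincide, and every $f_p$ must be surjective. Otherwise one first passes to the extension of \cite[(3.6) \& Definition 3.15]{Kim1}. For your global charts $f_p=f$, which in general is not surjective (e.g.\ the inclusion of a point). So $\iota(g)\circ\iota(f)$ is computed from an extended representative of $\iota(f)$, whose source charts are replaced by equivalent ones and whose $L_\infty[1]$-component is correspondingly modified. The identity $(g\circ f)^*=f^*\circ g^*$ shows at most that your naive composite is itself a pre-morphism. You still have to prove it is equivalent, in the sense of \cite[Definition 3.12]{Kim1}, to the composite the definition actually produces. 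Your Step 3 also makes the middle atlases differ before (\ref{morcom}) can even be applied: $\iota(f)$ needs $W^M_x\subset f^{-1}(W^N_{f(x)})$, while $\iota(g)$ needs $W^N_y\subset g^{-1}(W^P_{g(y)})$.

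That shrinking is the second gap. With fixed balls, condition (ii) $f(W_x)\subset W'_{f(x)}$ fails for some smooth maps, e.g.\ a diffeomorphism of $M$ fixing $x$ that expands $W_x$ beyond itself. So $\iota$ is not defined on morphisms unless atlases differing only in the choice of the $W_x$ are equivalent. The same fact is needed for $\iota(M)$ to be independent of choices. The paper does not supply it:
\begin{itemize}
\item Lemma \ref{lemdw} gives only an isomorphism of de Rham algebras.
\item Equivalence of atlases means equality of expanded open subcharts.
\item For open subcharts one only has the quasi-isomorphism of Lemma \ref{itavsteai}, not an identification.
\end{itemize}
Faithfulness likewise rests on the equivalence of pre-morphisms never changing the underlying map $f$, and that has to be taken from \cite[Definition 3.12]{Kim1}. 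Once these inputs are stated explicitly, your outline is a reasonable reconstruction of \cite[Proposition 3.18]{Kim1}. As written, though, the closing appeal to that proposition ``for the remaining details'' carries the whole embedding half.
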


\section{Hypercoverings for $L_\infty$-Kuranishi atlases}\label{hypks}

In Sections 3 and 4, we address the question raised in the introduction as to why the $L_{\infty}$-compatibilities in the definition of a Kuranishi atlas are not required. To this end, we work with \textit{hypercoverings} rather than \v{C}ech coverings (cf.\ \cite{DHI}). In preparation for Section 4, we introduce these in the present section.

\subsection{Simplicial set $N\left({\widehat{\mathcal{U}}}\right)_{\bullet}$}
We propose a method for incorporating simplicial structures into a Kuranishi atlas. Let $\left(X, \widehat{\mathcal{U}}\right)$ be a Kuranishi atlas. We associate to it a family of sets $N(\widehat{\mathcal{U}})_{\bullet}$, defined as follows:
\begin{itemize}
\item[--] $N\left(\widehat{\mathcal{U}}\right)_0 := X,$
\item[--] $N\left(\widehat{\mathcal{U}}\right)_1 := \left\{\alpha := (\alpha_0, \alpha_1) \in N\left(\widehat{\mathcal{U}}\right)_0^{\times 2} \mid \mathrm{Im}  \psi_{\alpha_0} \cap \mathrm{Im} \psi_{\alpha_1} \neq \emptyset, \ \exists \text{coord. change } \Phi_{\alpha} \right\},$
\item[--] $N\left(\widehat{\mathcal{U}}\right)_2 := \left\{ \alpha := (\alpha_0, \alpha_1, \alpha_2) \in N\left(\widehat{\mathcal{U}}\right)_1^{\times 3} \mid \partial_{t-1} \alpha_s = \partial_s \alpha_t, 0 \leq s < t \leq 2 \right\},$
\item[--]
\begin{equation}\nonumber
\begin{split}
N\left(\widehat{\mathcal{U}}\right)_{k \geq 3} := \Bigl\{ \alpha = (\alpha_0, \alpha_1, \cdots, \alpha_k) & \in N\left(\widehat{\mathcal{U}}\right)_{k-1}^{\times k+1}\\
& \mid \partial_{t-1} \alpha_s = \partial_s \alpha_t, 0 \leq s < t \leq k \Bigr\}.
\end{split}
\end{equation}
\end{itemize}

For $\alpha \in N\left(\widehat{\mathcal{U}}\right)_{\bullet},$ we denote by $v_i := v_i(\alpha)$ its $i$-th vertex. Here, $\partial_i$ denotes the face map that extracts the $i$-th component.

We denote
\begin{equation}\label{ualpha}
U_{\alpha} := \bigcap\limits_{\substack{v_l(\beta_i) = v_k(\alpha),\\ \beta_i \in N(\widehat{\mathcal{U}})_l,\\ 0 \leq l <k}} \phi^{-1}_{v_0(\alpha)v_0(\beta_i)}(U_{\beta_i}) \subset U_{v_0(\alpha)}.
\end{equation}

For low degrees, the sets $U_{\alpha}$ are given, for example, as follows:
\begin{enumerate}
\item $|\alpha| = 0,$ $U_{\alpha} = U_{p} \text{ for some } p \in X.$
\item $|\alpha| = 1,$ $U_{\alpha} \subset U_{\alpha_0}$ is the open subset associated with the coordinate change, satisfying
\begin{equation}\label{indhyp}
\psi_{\alpha_0}\big( U_{\alpha} \cap s_{\alpha_0}^{-1}(0) \big) = \mathrm{Im}  \psi_{\alpha_0} \cap \mathrm{Im} \psi_{\alpha_1},
\end{equation}
which follows from the definition.
\item $|\alpha| = 2,$ $U_{\alpha}:= \phi^{-1}_{v_0(\alpha)v_1(\alpha)}(U_{\alpha_2}) \cap U_{\alpha_1}$ is an open subset of $U_{v_0(\alpha)}.$ Observe that $U_{\alpha_2} \subset U_{v_1(\alpha)}$ and $U_{\alpha_1} \subset U_{v_0(\alpha)}.$ We can also verify that
\begin{equation}\nonumber
\psi_{v_0(\alpha)}\big( U_{\alpha} \cap s_{v_0(\alpha)}^{-1}(0) \big) = \mathrm{Im}  \psi_{v_0(\alpha)} \cap \mathrm{Im} \psi_{v_1(\alpha)} \cap \mathrm{Im}  \psi_{v_2(\alpha)}
\end{equation}
(cf. Lemma \ref{llpi}).
\end{enumerate}

\begin{assump}\label{contrassmp}
For our discussion of hypercoverings, we assume that all $U_{\alpha}$'s are \textit{contractible} open subsets, indexed by the simplices $\alpha$ of the simplicial set $N\left(\widehat{\mathcal{U}}\right)_{\bullet}.$
\end{assump}

\begin{lem}\label{llpi}
We have
\begin{equation}\nonumber
\psi_{v_0(\alpha)} \big(U_{\alpha} \cap s^{-1}_{v_0(\alpha)}(0)\big) = \bigcap\limits_{i = 0, \cdots, k} \mathrm{Im} \psi_{v_i{(\alpha)}}.
\end{equation}
\end{lem}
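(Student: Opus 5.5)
We argue by induction on the dimension $k=|\alpha|$, using the recursive description of $U_{\alpha}$ in (\ref{ualpha}) and the axioms of coordinate changes in Definition \ref{kstr}. The base cases are already recorded above. For $k=0$ the claim is $\psi_p(U_p\cap s_p^{-1}(0))=\mathrm{Im}\psi_p$, which is trivial. For $k=1$ it is exactly (\ref{indhyp}), i.e.\ condition (iv) of Definition \ref{kstr}.

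For the inductive step, take $\alpha\in N(\widehat{\mathcal{U}})_k$ with vertices $v_0,\dots,v_k$, and write $p=v_0(\alpha)$. Unwinding (\ref{ualpha}), $U_{\alpha}$ is obtained in two stages. First one intersects the open sets $U_{\alpha_j}\subset U_p$ coming from the faces of $\alpha$ that contain $v_0$, in particular $\partial_k\alpha$. Then one intersects with $\phi_{pv_1}^{-1}$ of the open set attached to the face $\partial_0\alpha$, which lives in $U_{v_1}$. In the notation of the case $|\alpha|=2$, this reads
\[
U_{\alpha}=U_{\partial_k\alpha}\cap\phi_{pv_1}^{-1}\bigl(U_{\partial_0\alpha}\bigr)\cap(\text{further faces}).
\]
It therefore suffices to prove two facts and then intersect.

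\emph{Step 1.} The zero locus of each factor maps under $\psi_p$ onto the intersection of the $\mathrm{Im}\psi_{v_i}$ over the vertices of the corresponding face. For faces containing $v_0$ this is the induction hypothesis. For $\partial_0\alpha$ we use the transport identity
\[
\psi_p\bigl(s_p^{-1}(0)\cap U_{pv_1}\cap\phi_{pv_1}^{-1}(V)\bigr)=\mathrm{Im}\psi_p\cap\psi_{v_1}\bigl(V\cap s_{v_1}^{-1}(0)\bigr),
\]
valid for $V\subset U_{v_1}$. The inclusion $\subseteq$ follows from $\psi_{v_1}\circ\phi_{pv_1}=\psi_p$ (Definition \ref{kstr}(ii)) together with the fact that $\phi_{pv_1}$ preserves zero loci. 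The inclusion $\supseteq$ follows from (iv): a point $y\in\mathrm{Im}\psi_p\cap\mathrm{Im}\psi_{v_1}$ has a preimage $x\in s_p^{-1}(0)\cap U_{pv_1}$. Then $\psi_{v_1}(\phi_{pv_1}(x))=y$, and injectivity of $\psi_{v_1}$ modulo $\Gamma_{v_1}$, combined with $\Gamma$-invariance of the relevant open sets, gives $\phi_{pv_1}(x)\in V$ after moving $x$ within its $\Gamma_p$-orbit if necessary.

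\emph{Step 2.} $\psi_p$ restricted to $s_p^{-1}(0)/\Gamma_p$ is injective. Hence $\psi_p$ of an intersection of $\Gamma_p$-invariant subsets of the zero locus equals the intersection of the images. Combining Steps 1 and 2 gives $\bigcap_{i=0}^k\mathrm{Im}\psi_{v_i}$, since every vertex of $\alpha$ occurs in some face. For coordinate changes indexed by non-adjacent vertices we use the base cocycle condition (iii), $\phi_{v_1v_j}\circ\phi_{pv_1}=\phi_{pv_j}$ on zero loci, so that the pullbacks in (\ref{ualpha}) are compatible regardless of the path of vertices used.

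The main obstacle is the equivariance bookkeeping in Step 1 and Step 2. The map $\psi_p$ is injective only on $\Gamma_p$-orbits, so passing images through intersections requires the $U_{\alpha}$ to be $\Gamma$-invariant, or at least saturated on the zero locus. My first attempt would be to check this saturation inductively from (\ref{ualpha}), using the equivariance of $\phi_{pq}$. If that check fails, I would replace each $U_{\alpha}$ by its $\Gamma_p$-saturation, which does not change the images under $\psi_p$.
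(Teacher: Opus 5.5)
Your argument matches the paper's proof: induction on $k$, writing $U_{\alpha}$ as an intersection of pullbacks of lower-dimensional $U_{\beta}$'s, commuting $\psi_{v_0(\alpha)}$ with that intersection by injectivity, transporting along coordinate changes via $\psi_q\circ\phi_{pq}=\psi_p$, and closing with the induction hypothesis. Your transport identity (keeping the factor $\mathrm{Im}\,\psi_p$, with the reverse inclusion from Definition \ref{kstr}(iv)) and your $\Gamma$-saturation remark are in fact more careful than the paper, which drops $\mathrm{Im}\,\psi_{v_0(\alpha)}$ at that step and treats $\psi$ as injective on $s^{-1}(0)$ itself; note only that in the paper's convention the face omitting $v_0$ is $\partial_k\alpha$, not $\partial_0\alpha$ (cf.\ Lemma \ref{lemu}(v)), so your face labels are swapped relative to the paper's.
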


\begin{proof}
Since $s^{-1}_{v_0(\alpha)}(0) \subset \phi^{-1}_{v_0(\alpha)v_0(\beta_i)} \big( s^{-1}_{v_0(\beta_i)}(0) \big)$ for all $\beta_i,$ we obtain
\begin{equation}\nonumber
s^{-1}_{v_0(\alpha)}(0) = s^{-1}_{v_0(\alpha)}(0) \cap \bigcap\limits_{\substack{v_l(\beta_i) = v_k(\alpha),\\ \beta_i \in N(\widehat{\mathcal{U}})_l, \\ 0 \leq l <k}} \phi^{-1}_{v_0(\alpha)v_0(\beta_i)} \big( s^{-1}_{v_0(\beta_i)}(0) \big).
\end{equation}
(In particular, for $\beta'_i$ with $v_0(\beta'_i) = v_0(\alpha),$ we have $s^{-1}_{v_0(\alpha)}(0) = \phi^{-1}_{v_0(\alpha)v_0(\beta'_i)} \big( s^{-1}_{v_0(\beta'_i)}(0) \big).$)
Hence, we obtain
\beastar
\psi_{v_0(\alpha)} \big(U_{\alpha} \cap s^{-1}_{v_0(\alpha)}(0)\big)
& = & \psi_{v_0(\alpha)} \big(U_{\alpha} \cap s^{-1}_{v_0(\alpha)}(0) \cap \bigcap\limits_{(\cdots)} \phi^{-1}_{v_0(\alpha)v_0(\beta_i)} \big( s^{-1}_{v_0(\beta_i)}(0) \big) \\
&\overset{(1)}{=} & \psi_{v_0(\alpha)} \big( \bigcap\limits_{(\cdots)} \phi^{-1}_{v_0(\alpha)v_0(\beta_i)} \big( U_{\beta_i} \cap s^{-1}_{v_0(\beta_i)}(0) \big) \big)\\
&\overset{(2)}{=} & \bigcap\limits_{(\cdots)} \psi_{v_0(\alpha)} \circ \phi^{-1}_{v_0(\alpha)v_0(\beta_i)}
\big( U_{\beta_i} \cap s^{-1}_{v_0(\beta_i)}(0) \big) \\
&= &  \bigcap\limits_{(\cdots)} \psi_{v_0(\beta_i)} \big( U_{\beta_i} \cap s^{-1}_{v_0(\beta_i)}(0) \big) \\
& \overset{(3)}{=} & \bigcap\limits_{(\cdots)} \bigcap\limits_{j = 0, \dots, |\beta_i|} \mathrm{Im} \psi_{v_j(\beta_i)}
= \bigcap\limits_{i = 0, \cdots, k} \mathrm{Im} \psi_{v_i(\alpha)},
\eeastar
where (1) follows from (\ref{ualpha}), i.e., the definition of $U_{\alpha},$ while equality $(2)$ follows from the injectivity of the map $\psi_{v_0(\alpha)}$, and equality $(3)$ follows from the induction hypothesis (\ref{indhyp}).
\end{proof}

We consider the face maps
$$
\partial_i : N\left(\widehat{\mathcal{U}}\right)_k \rightarrow N\left(\widehat{\mathcal{U}}\right)_{k-1}, \ i = 0, \cdots, k,\quad
\partial_i(\alpha_0, \cdots, \alpha_k) : = \alpha_i,
$$
and the degeneracy maps
$$
\sigma_i : N\left(\widehat{\mathcal{U}}\right)_k \rightarrow N\left(\widehat{\mathcal{U}}\right)_{k+1}, \ i = 0, \cdots, k,
$$
defined by
$$
\alpha = (\alpha_0, \cdots, \alpha_k) \mapsto
\begin{cases}
(\alpha, \alpha, \sigma_0 \alpha_1, \cdots, \sigma_0 \alpha_k) & \text{ if } i = 0,\\
(\sigma_{i-1} \alpha_0, \cdots, \sigma_{i-1} \alpha_{i-1}, \alpha, \alpha, \sigma_{i} \alpha_{i+1}, \cdots, \sigma_{i} \alpha_k) & \text{ if } 1 \leq i \leq k-2,\\
(\sigma_{k-1} \alpha_0, \cdots, \sigma_{k-1} \alpha_{k-1}, \alpha, \alpha) & \text{ if } i =k-1,\\
(\alpha, \sigma_{k} \alpha_0, \cdots, \sigma_{k} \alpha_{k}, \alpha) & \text{ if } i =k.\\
\end{cases}
$$

\begin{lem}\label{lemu} The following properties hold:
\begin{enumerate}[label = (\roman*)]
\item The degeneracy maps are well-defined.
\item $v_0(\alpha) = v_0(\sigma_j \alpha)$ and $v_k(\alpha) = v_{k+1}(\sigma_j \alpha)$ for each $0 \leq j \leq k.$
\item $U_{\alpha} = U_{\sigma_j \alpha}$ for each $0 \leq j \leq k.$
\item $U_{\alpha} \subset U_{\partial_j \alpha},$ for each $0 \leq j < k.$
\item $U_{\alpha} \subset \phi_{v_0(\alpha)v_1(\alpha)}^{-1} (U_{\partial_k \alpha}).$
\item $\left(N\left(\widehat{\mathcal{U}}\right)_{\bullet}, \{\partial_j\}, \{\sigma_j\}\right)$ is a simplicial set.
\end{enumerate}
\end{lem}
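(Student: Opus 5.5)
The plan is to argue by induction on the simplicial degree $k$. The point is that a $k$-simplex $\alpha=(\alpha_0,\dots,\alpha_k)$ is by definition the tuple of its faces, subject to the matching relations $\partial_{t-1}\alpha_s=\partial_s\alpha_t$. So every claim about $\alpha$ can be reduced to the corresponding claim about its faces in degree $k-1$. For $k=0,1$ everything is checked by hand from Definition \ref{kstr}:
\begin{itemize}
\item the degenerate edge $\sigma_0 p=(p,p)$ is admissible because of the identity coordinate change $\Phi_{pp}=\mathrm{id}_{\mathcal{U}_p}$;
\item for that edge $U_{pp}=U_p$.
\end{itemize}
These two facts are the base cases for (i)--(iii). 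The vertices $v_i(\alpha)$ are also defined recursively, namely $v_i(\alpha)=v_i(\partial_k\alpha)$ for $i<k$ and $v_k(\alpha)=v_{k-1}(\partial_0\alpha)$. I will use this description throughout.

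For (i), I must check that $\sigma_j\alpha$ again satisfies $\partial_{t-1}(\sigma_j\alpha)_s=\partial_s(\sigma_j\alpha)_t$. I will do this case by case on the positions of $s,t$ relative to the two repeated copies of $\alpha$. Each case reduces to one of two things: the matching relations already satisfied by $\alpha$, or the simplicial identities $\partial_i\sigma_j=\sigma_{j-1}\partial_i$, $\partial_j\sigma_j=\partial_{j+1}\sigma_j=\mathrm{id}$, $\partial_i\sigma_j=\sigma_j\partial_{i-1}$ in degree $k-1$, which are available by the induction hypothesis for (vi). Part (ii) then follows directly from the recursive description of vertices. Part (vi) amounts to the simplicial identities in degree $k$. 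The face–face identities $\partial_i\partial_j=\partial_{j-1}\partial_i$ are exactly the defining matching condition. The identities involving degeneracies are read off componentwise from the formula for $\sigma_j$, again using the lower-degree identities.

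For (iii) and (iv), I will compare the index sets of the intersection (\ref{ualpha}).
\begin{itemize}
\item For $j<k$ we have $v_0(\partial_j\alpha)=v_0(\alpha)$. Every lower simplex $\beta_i$ contributing to $U_{\partial_j\alpha}$ also contributes to $U_\alpha$, so $U_\alpha$ is an intersection over a larger family inside the same $U_{v_0(\alpha)}$. This gives (iv).
\item For (iii), the extra indices appearing for $\sigma_j\alpha$ are degeneracies of simplices already indexing $U_\alpha$. By the induction hypothesis, $U_{\sigma\beta}=U_\beta$, and the relevant transition maps are $\phi_{pp}=\mathrm{id}$. Hence the extra indices contribute nothing new, while (ii) guarantees the ambient chart $U_{v_0}$ is unchanged.
\end{itemize}

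The main obstacle will be (v). There the base point changes, since $v_0(\partial_k\alpha)=v_1(\alpha)$. Each term $\phi^{-1}_{v_1v_0(\beta)}(U_\beta)$ of $U_{\partial_k\alpha}$ must be pulled back by $\phi_{v_0v_1}$ and compared with the term $\phi^{-1}_{v_0v_0(\beta)}(U_\beta)$ appearing in $U_\alpha$. My first attempt will be to use the base-map cocycle condition (iii) of Definition \ref{kstr}, $\phi_{v_1v_0(\beta)}\circ\phi_{v_0v_1}=\phi_{v_0v_0(\beta)}$, which would give the inclusion term by term. However, that condition is stated only on zero loci. If the pointwise identity off $s^{-1}(0)$ is not available, I will instead take the intersection formula (\ref{ualpha}) as already containing the pulled-back sets. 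That is, I will note that the index set for $\alpha$ includes the simplices $\partial_k\alpha$ and the $\beta$'s through $v_1$, composed with $\phi_{v_0v_1}$. Then (v) holds by construction, with Lemma \ref{llpi} confirming consistency on the zero locus.
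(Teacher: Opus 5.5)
Your outline of (i)--(iii) and (vi) matches the paper's argument. Both use a joint induction on $k$, reduce (i) and (vi) to the matching condition and the lower-degree simplicial identities, and obtain (iii) because the new indexing simplices of $\sigma_j\alpha$ contribute nothing new. Those new indices also include $\alpha$ itself, contributing $\phi^{-1}_{v_0(\alpha)v_0(\alpha)}(U_\alpha)=U_\alpha$; your list omits it, but your remark $\phi_{pp}=\mathrm{id}$ covers it.

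For (v), your fallback is better than the paper's proof and should simply be the proof. In the paper's convention, $\partial_k\alpha$ is the face $(v_1(\alpha),\dots,v_k(\alpha))$, a $(k-1)$-simplex with $v_{k-1}(\partial_k\alpha)=v_k(\alpha)$. So it literally is one of the indexing simplices in (\ref{ualpha}), and it contributes the term $\phi^{-1}_{v_0(\alpha)v_1(\alpha)}(U_{\partial_k\alpha})$. Compare the case $k=2$, where $U_\alpha=\phi^{-1}_{v_0v_1}(U_{\alpha_2})\cap U_{\alpha_1}$. Hence (v) is immediate. The paper instead discards the $l=k-1$ terms and rebuilds the inclusion through $\phi^{-1}_{v_0(\alpha)v_0(\beta)}=\phi^{-1}_{v_0(\alpha)v_1(\alpha)}\circ\phi^{-1}_{v_1(\alpha)v_0(\beta)}$ on all of $U_\alpha$. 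That is the off-zero-locus cocycle identity which Definition \ref{kstr}(iii) does not supply, so your objection is correct. Drop the clause about the ``$\beta$'s through $v_1$, composed with $\phi_{v_0v_1}$'': the terms in (\ref{ualpha}) use $\phi_{v_0(\alpha)v_0(\beta)}$ directly, and you do not need them.

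Two points need repair.

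\emph{Your recursive description of vertices is reversed.} Several features fix the convention. Statement (iv) compares subsets of $U_{v_0(\alpha)}$ for $j<k$, (v) uses $\phi_{v_0(\alpha)v_1(\alpha)}$, and a $1$-simplex $(\alpha_0,\alpha_1)$ carries $\Phi_\alpha:\mathcal{U}_{\alpha_0}\to\mathcal{U}_{\alpha_1}$. Together these force $\partial_0$ to drop the last vertex and $\partial_k$ to drop the first. The correct recursion is therefore $v_i(\alpha)=v_i(\partial_0\alpha)$ for $i<k$ and $v_k(\alpha)=v_{k-1}(\partial_k\alpha)$, which is the paper's characterization via $\partial_0\circ\cdots\circ\partial_0$ and $\partial_1\circ\cdots\circ\partial_k$. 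As written, your recursion contradicts your own identity $v_0(\partial_k\alpha)=v_1(\alpha)$ in (v). Statement (ii) survives once this is made consistent.

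\emph{The index-set comparison in (iv) fails for $j=0$.} Here $\partial_0\alpha=(v_0,\dots,v_{k-1})$ ends at $v_{k-1}(\alpha)$. The simplices indexing $U_{\partial_0\alpha}$ therefore end at $v_{k-1}$ and are not among those indexing $U_\alpha$. The paper has the same slip, asserting $v_{k-1}(\partial_j\alpha)=v_k(\alpha)$ for all $j<k$. The fix is a short induction:
\begin{enumerate}
\item For each face $\beta$ of $\partial_0\alpha$ indexing $U_{\partial_0\alpha}$, append $v_k$ to get a face $\beta^{+}$ of $\alpha$. It has dimension $<k$, ends at $v_k(\alpha)$, and satisfies $v_0(\beta^{+})=v_0(\beta)$.
\item Then $\beta^{+}$ indexes $U_\alpha$.
\item The $j=0$ case of (iv) in lower dimension gives $U_{\beta^{+}}\subset U_{\beta}$; the base case is $U_{v_{k-1}v_k}\subset U_{v_{k-1}}$.
\item Hence $\phi^{-1}_{v_0(\alpha)v_0(\beta)}(U_{\beta^{+}})\subset\phi^{-1}_{v_0(\alpha)v_0(\beta)}(U_\beta)$. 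Every term of $U_{\partial_0\alpha}$ contains a term of $U_\alpha$, so $U_\alpha\subset U_{\partial_0\alpha}$.
\end{enumerate}
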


\begin{proof} We prove each statement in turn:
\begin{enumerate}[label = (\roman*)]
\item This follows from a straightforward computation verifying that $\partial_{t-1} \alpha_s = \partial_s \alpha_t$ holds for all $0 \leq s < t \leq k.$
\item The $0$-th vertex and the $k$-th vertex of the $k$-simplex $\alpha$ are characterized by the repeated compositions of face maps $\partial_0 \circ \cdots \circ \partial_0(\alpha)$ and $\partial_1 \circ \cdots \circ \partial_k(\alpha),$ respectively. One checks that $\partial_0 \circ \cdots \circ \partial_0(\alpha) = \partial_0 \circ \cdots \circ \partial_0(\sigma_j \alpha)$ and $\partial_1 \circ \cdots \circ \partial_k(\alpha) = \partial_1 \circ \cdots \circ \partial_{k+1}(\sigma_j \alpha)$ for all $j.$
\item We proceed by induction. It is straightforward to verify the statement for $k=2.$ By (ii), the additional indexing simplex $\beta_j'$ in (\ref{ualpha}) needed for $\sigma_j \alpha$ (as compared with $\alpha$) is either $\alpha$ itself, or a degenerate simplex of smaller degree $\leq k.$ In the former case, we have $\phi^{-1}_{v_0(\alpha) v_0(\alpha)} (U_{\alpha})= U_{\alpha}.$ In the latter case, for $\beta'_j$ with $|\beta'_j| \leq k-1,$ we have $v_0(\sigma_j \beta'_j) = v_0(\beta'_j)$ and $\phi^{-1}_{v_0(\alpha)v_0(\sigma_j \beta'_j)} (U_{\sigma_j \beta'_j}) = \phi^{-1}_{v_0(\alpha)v_0(\beta'_j)} (U_{\beta'_j})$ by the induction hypothesis. Taking the intersection of all these components, we obtain $U_{\alpha} = U_{\sigma_j \alpha}.$
\item We have $v_0(\partial_j \alpha) = v_0(\alpha)$ and $v_{k-1}(\partial_j \alpha) = v_k(\alpha),$ so
\beastar
U_{\alpha} & = &  \bigcap\limits_{\substack{v_l(\beta_i)
= v_k(\alpha),\\ \beta_i \in  N(\widehat{\mathcal{U}})_l,\\ 0 \leq l <k}} \phi^{-1}_{v_0(\alpha)v_0(\beta_i)}(U_{\beta_i}) \subset \bigcap\limits_{\substack{v_l(\beta_i) = v_k(\alpha),\\ \beta_i \in N(\widehat{\mathcal{U}})_l,\\ 0 \leq l <k-1}} \phi^{-1}_{v_0(\alpha)v_0(\beta_i)}(U_{\beta_i}) \\
&= & \bigcap\limits_{\substack{v_l(\beta_i) = v_{k-1}(\partial_j \alpha),\\ \beta_i \in  N(\widehat{\mathcal{U}})_l,\\ 0 \leq l <k-1}} \phi^{-1}_{v_0(\partial_j \alpha)v_0(\beta_i)}(U_{\beta_i}) = U_{\partial_j \alpha}.
\eeastar
\item We have $v_0(\partial_k \alpha) = v_1(\alpha)$ and $v_{k-1}(\partial_k \alpha) = v_k(\alpha),$ hence
\beastar
U_{\alpha} & = & \bigcap\limits_{\substack{v_l(\beta_i) = v_k(\alpha),\\ \beta_i \in  N(\widehat{\mathcal{U}})_l,\\ 0 \leq l <k}} \phi^{-1}_{v_0(\alpha)v_0(\beta_i)}(U_{\beta_i}) \subset \bigcap\limits_{\substack{v_l(\beta_i) = v_k(\alpha),\\ \beta_i \in  N(\widehat{\mathcal{U}})_l,\\ 0 \leq l <k-1}} \phi^{-1}_{v_0(\alpha)v_0(\beta_i)}(U_{\beta_i})\\
&= &  \bigcap\limits_{\substack{v_l(\beta_i) = v_k(\alpha),\\ \beta_i \in  N(\widehat{\mathcal{U}})_l,\\ 0 \leq l <k-1}} \phi^{-1}_{v_0(\alpha)v_1 (\alpha)} \circ \phi^{-1}_{v_1(\alpha)v_0(\beta_i)}(U_{\beta_i})\\
& \overset{*}{=} & \phi^{-1}_{v_0(\alpha)v_1(\alpha)} \left( \bigcap\limits_{\substack{v_l(\beta_i)
= v_{k-1}(\partial_k \alpha),\\ \beta_i \in  N(\widehat{\mathcal{U}})_l,\\ 0 \leq l <k-1}}
\phi^{-1}_{v_0(\partial_k \alpha)v_0(\beta_i)}(U_{\beta_i}) \right)\\
& = &  \phi^{-1}_{v_0(\alpha)v_1(\alpha)}(U_{\partial_k \alpha}),
\eeastar
where equality $*$ holds because $\phi^{-1}_{v_0(\alpha)v_1(\alpha)}$ is injective.
\item The simplicial identities can be verified in a routine manner, and we omit the details.
\end{enumerate}
\end{proof}

\subsection{Kuranishi hypercoverings}
The simplicial set $N\left(\widehat{\mathcal{U}}\right)_{\bullet}$ introduced in the previous subsection serves as a family of parameters for systematically covering the underlying topological space. This is precisely the role played by Kuranishi hypercoverings.

\begin{defn}\label{khdef}
Let $X$ be a topological space. Given a simplicial set $S_{\bullet}$, we consider a family of subsets $\{V_{\alpha}\}_{\alpha \in S_{\bullet}}$ of $X$ indexed by the simplices of $S_{\bullet}$. We call this a \textit{hypercovering} of $X$ if the following hold:
\begin{enumerate}[label=(\roman*)]
\item $\bigcup\limits_{\alpha \in S_0} V_{\alpha} = X$,
\item $V_{\partial_i \alpha} \supset V_{\alpha}$,
\item $V_{\sigma_i \alpha} = V_{\alpha}$,
\item $V_{\alpha_0} \cap V_{\alpha_1} = \bigcup\limits_{\substack{ \alpha \in S_1, \\ \partial_i \alpha = \alpha_i, \ i = 1,2}} V_{\alpha}$,
\item $\bigcap\limits_{i = 0, \cdots, k} V_{\alpha_i} = \bigcup\limits_{\substack{ \alpha \in S_k, \\ \partial_i \alpha = \alpha_i }} V_{\alpha}$ for all $\alpha_1, \cdots, \alpha_k \in S_k$ with $\partial_{t-1} \alpha_s = \partial_s \alpha_t, \ 0 \leq s < t \leq k,$ and $2 \leq k$.
\end{enumerate}
\end{defn}

\begin{exam}[Hypercovering induced by $N\left(\widehat{\mathcal{U}}\right)_{\bullet}$]
Given a Kuranishi atlas $(X, \widehat{\mathcal{U}})$, let $\alpha$ be a $k$-simplex of $N\left(\widehat{\mathcal{U}}\right)_{\bullet}$, as in the previous subsection. Consider the family of subsets
\begin{equation}\nonumber
V_{\alpha} := \psi_{v_0(\alpha)}\big( U_{\alpha} \cap s_{v_0(\alpha)}^{-1}(0)\big) \subset V_{v_0(\alpha)}.
\end{equation}
Note that $V_{\alpha} = \bigcap\limits_{i = 0, \cdots, k} \mathrm{Im}\,\psi_{v_i(\alpha)}$ by Lemma \ref{llpi}.
\end{exam}

\begin{prop}\label{kurhyp}
$\{V_{\alpha}\}$ is a hypercovering of $X$.
\end{prop}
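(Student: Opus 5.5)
The plan is to verify conditions (i)--(v) of Definition \ref{khdef} one at a time. The single tool for all of them is the identity $V_{\alpha} = \bigcap_{i=0}^{k} \mathrm{Im}\,\psi_{v_i(\alpha)}$ from Lemma \ref{llpi}. This identity turns every statement about the $V_\alpha$ into a statement about intersections of chart images in $X$. Together with Lemma \ref{lemu}, it should make most of the conditions formal.

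\textbf{Conditions (i)--(iii).} For (i), a $0$-simplex is a point $p \in X$ with $V_p = \mathrm{Im}\,\psi_p$. Since $p \in \mathrm{Im}\,\psi_p$, these sets cover $X$.

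For (ii), the vertex set of $\partial_i\alpha$ is contained in the vertex set of $\alpha$, so the intersection defining $V_{\partial_i\alpha}$ has fewer terms and $V_{\partial_i\alpha} \supset V_\alpha$. One could also derive this from Lemma \ref{lemu}(iv),(v), using $\psi_{v_1(\alpha)}\circ\phi_{v_0(\alpha)v_1(\alpha)} = \psi_{v_0(\alpha)}$ on zero sets for the face $\partial_k$. The vertex description is cleaner, so I will use it.

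For (iii), the vertices of $\sigma_i\alpha$ are those of $\alpha$ with one repeated, so the intersections agree. This is also immediate from Lemma \ref{lemu}(ii),(iii), since $v_0$ is unchanged and $U_{\sigma_i\alpha} = U_\alpha$.

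\textbf{Condition (iv).} Take $0$-simplices $\alpha_0,\alpha_1$, i.e.\ points $p,q$. If $\mathrm{Im}\,\psi_p \cap \mathrm{Im}\,\psi_q = \emptyset$, both sides are empty. Otherwise the definition of an atlas supplies a coordinate change $\Phi_{pq}$, so $(p,q) \in N(\widehat{\mathcal{U}})_1$. The right-hand side of (iv) then contains $V_{(p,q)} = \mathrm{Im}\,\psi_p\cap\mathrm{Im}\,\psi_q$, which is exactly the left-hand side. The reverse inclusion follows from (ii).

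Here I must be careful about the face/vertex convention. For $(\alpha_0,\alpha_1)$, $\partial_i$ extracts the $i$-th component, whereas the hypercovering axiom indexes faces the other way. I expect only a relabeling, and the $1$-simplex $(q,p)$ exists as well whenever $(p,q)$ does.

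\textbf{Condition (v).} This is the main step. Let $\alpha_0,\dots,\alpha_k$ be compatible $(k-1)$-simplices. By construction of $N(\widehat{\mathcal{U}})_k$, the tuple $\alpha := (\alpha_0,\dots,\alpha_k)$ is itself a $k$-simplex; the compatibility $\partial_{t-1}\alpha_s = \partial_s\alpha_t$ is precisely the defining condition. So the right-hand union contains $V_\alpha$, and the only question is whether $V_\alpha$ equals $\bigcap_i V_{\alpha_i}$.

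To see this, note that the vertex set of $\alpha$ is the union of the vertex sets of its faces $\alpha_i$. Applying Lemma \ref{llpi} to each side gives
\[
\bigcap_i V_{\alpha_i} = \bigcap_i \bigcap_j \mathrm{Im}\,\psi_{v_j(\alpha_i)} = \bigcap_{l=0}^{k} \mathrm{Im}\,\psi_{v_l(\alpha)} = V_\alpha .
\]
Uniqueness of $\alpha$ means the union consists of $V_\alpha$ alone, so equality holds.

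The obstacle I anticipate is bookkeeping rather than substance. I need to check that the vertices $v_l(\alpha)$, defined through iterated faces, really are the union of the vertices of the $\alpha_i$; this needs $k\ge 2$, so that each vertex lies in some face. I also need the nonemptiness conventions to match. For $k=1$ the existence of a coordinate change is required, which is why (iv) is treated separately. For $k\ge2$ no extra condition is imposed, so $V_\alpha$ may be empty, which is harmless.
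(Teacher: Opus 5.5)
Your proposal is correct and follows the paper's proof: each axiom is reduced, via Lemma \ref{llpi}, to a comparison of vertex sets, and (iv) rests on the existence of coordinate changes together with Definition \ref{kstr}(iv). The only difference is cosmetic. The paper phrases (v) as an induction on $k$, whereas you apply Lemma \ref{llpi} directly to each face $\alpha_i$; also, the face-convention mismatch you feared in (iv) does not arise, since Definition \ref{khdef} uses the same convention $\partial_i\alpha=\alpha_i$.
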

\begin{proof}
We verify conditions (i) through (v) of Definition \ref{khdef}. Condition (i) follows from the definition of a Kuranishi atlas.
Condition (ii) follows from
$$
\big\{v_0(\partial_i \alpha), \cdots, v_k(\partial_i \alpha)\big\} \subset \big\{v_0(\alpha), \cdots, v_k(\alpha)\big\},
$$
and (iii) from
$$
\big\{v_0(\sigma_i \alpha), \cdots, v_k(\sigma_i \alpha)\big\} = \big\{v_0(\alpha), \cdots, v_k(\alpha)\big\}.
$$
Condition (iv) follows from Definition \ref{kstr} (iv). We remark that all the $V_{\alpha}$ on the right-hand side of (iv) coincide; that is, $V_{\alpha}$ for a $1$-simplex $\alpha$ is independent of such choices, even though different $\alpha \in N(\widehat{\mathcal{U}})_k$ may occur. The same observation applies to our proof of condition (v). We prove (v) by induction, assuming the analogous equality (the induction hypothesis) holds for each $V_{\alpha_i}$, namely,
\begin{equation}\nonumber
V_{\alpha_i} = \bigcap\limits_{j = 0, \cdots, k-1}V_{\partial_j \alpha_i}
= \bigcap\limits_{j = 0, \cdots, k-1}\mathrm{Im}\,\psi_{v_j(\partial_j \alpha_i)}.
\end{equation}
Then
\begin{equation}\nonumber
\bigcup\limits_{\substack{i = 0, \cdots, k, \\ j = 0, \cdots, k-1}} \big\{v_j(\alpha_i)\big\}
= \big\{v_0(\alpha), \cdots, v_k(\alpha)\big\}
\end{equation}
implies the desired equality.
\end{proof}

\subsection{Kuranishi chart category}\label{ksctr}

In this subsection, we prepare for a rigorous definition of higher cocycle conditions in Section 4 by introducing a simplicially enriched category associated to a Kuranishi space, whose objects are Kuranishi charts that combine to form an atlas equivalent to the given one. This construction provides the foundation for a precise formulation of higher cocycle conditions.

Given a Kuranishi space $\mathfrak{X}= \left(X, \left[\widehat{\mathcal{U}}\right]\right),$ we define the \emph{Kuranishi chart category} $\mathcal{K}_{\mathfrak{X}}$ as follows.

Let $\{U_p\}_{p \in X}$ be a collection of open subsets of $X,$ and let
$U_\alpha$ be as in (\ref{ualpha}). The objects of $\mathcal{K}_{\mathfrak{X}}$ are given by
\begin{equation}\nonumber
\text{Ob}(\mathcal{K}_{\mathfrak{X}}) := \left\{\mathcal{U}_p  \mid p \in X, \ \mathcal{U}_p \in {\widehat{\mathcal{U}}}_1 \text{ with } \left[{\widehat{\mathcal{U}}}_1\right] = \left[\widehat{\mathcal{U}}\right]\right\}.
\end{equation}

For a pair $\mathcal{U}_p, \underline{\mathcal{U}}_q \in \text{Ob}(\mathcal{K}_{\mathfrak{X}}),$ we consider the set
\begin{equation}\label{spq}
\begin{split}
\mathscr{S}(\mathcal{U}_p, \underline{\mathcal{U}}_q) := \begin{cases} \big\{ \mathscr{W} \subset U_p
\mid \mathscr{W} \text{ is a contractible open subset satisfying }\\
\quad \quad \quad \quad \quad \quad \quad \quad \quad \quad \quad \quad \psi_p(s^{-1}_p(0) \cap \mathscr{W}) \subset \underline{\psi}_q\big(\underline{s}_q^{-1}(0)\big)\big\},\\
\emptyset  \quad \quad \quad \quad \text{ if no such $\mathscr{W}$ exists}.
\end{cases}
\end{split}
\end{equation}

For $p, q$ with $V_p \cap V_q \neq \emptyset,$ the morphism space is given by
\begin{equation}\nonumber
\text{Mor}_{\mathcal{K}_{\mathfrak{X}}}(\mathcal{U}_p, \underline{\mathcal{U}}_q):= \coprod\limits_{k=0}^{\infty} M_{pq}^k,
\end{equation}
where $M_{pq}^k$ is given by
\begin{equation}\label{mpqk}
M_{pq}^k := \begin{cases}
\big\{ (\mathscr{W}_{pq}, \Phi_{pq}^k, \{\widehat{\Phi}_{pq,x}^{k}\}) \mid  \text{(a)--(c) below hold} \big\} &\text{ if } \dim U_p = \dim \underline{U}_q,\\
\emptyset &\text{ otherwise.}
\end{cases}
\end{equation}
\begin{enumerate}
\item[(a)] $\mathscr{W}_{pq} \in \mathscr{S}(\mathcal{U}_p, \underline{\mathcal{U}}_q)$ is an open subset as in (\ref{spq}).
\item[(b)]
\[
\Phi_{pq}^k : \mathscr{W}_{pq} \times \Delta^k \rightarrow \underline{U}_q
\]
is a smooth map satisfying:
\begin{enumerate}[label = (\roman*)]
\item $\Phi^0_{pq}$ is an embedding.
\item $\Phi_{pq}^k((s^{-1}_p(0) \cap \mathscr{W}_{pq}) \times \Delta^k) \subset \underline{s}^{-1}_q(0)$.
\item $ \underline{\psi}_q  \circ \Phi^0_{pq} = \psi_p$ on $s_p^{-1}(0) \cap \mathscr{W}_{pq}.$
\item $\Phi^k_{pq}$ restricts to a \textit{surjection} $\Phi_{pq}^k|_{W_x \times \Delta^k} : W_x \times \Delta^k \twoheadrightarrow W'_{\Phi_{pq}^0(x)}$ for each $x \in s^{-1}_p(0) \cap \mathscr{W}_{pq}$ and $k \geq 0.$
\item $\Phi_{pq}^{k} \circ (\mathrm{id}_{\mathscr{W}_{pq}} \times d_i) = \Phi_{pq}^{k-1}, \text{ for all } i.$
\end{enumerate}
\item[(c)]
\[
\widehat{\Phi}_{pq,x}^{k} :  \mathcal{C}'_{q,\Phi_{pq}^0(x)} (=\mathcal{C}'_{q,\Phi_{pq}^0(x), \Phi_{pq}^k})  \rightarrow \Omega^* (\Delta^k) \otimes \mathcal{C}_{p,x}\]
is an $L_{\infty}[1]$-morphism, and an $L_{\infty}[1]$-$k$-homotopy in the sense of Example \ref{hhex}, for each $x \in s^{-1}_p(0) \cap \mathscr{W}_{pq},$ satisfying: $\widehat{\Phi}^0_{pq,x}$ is a quasi-isomorphism for each $x \in s^{-1}_p(0) \cap \mathscr{W}_{pq}.$
\end{enumerate}
Here, $\Phi_{pq}^0$ should not be confused with the coordinate changes of Kuranishi spaces.

We next define the composition of morphisms,
\begin{equation}\nonumber
\circ : \text{Mor}_{\mathcal{K}_{\mathfrak{X}}}\big( \underline{\mathcal{U}}_q,  \underline{\mathcal{U}}'_r \big) \times \text{Mor}_{\mathcal{K}_{\mathfrak{X}}}\big(\mathcal{U}_p, \underline{\mathcal{U}}_q \big)  \rightarrow \text{Mor}_{\mathcal{K}_{\mathfrak{X}}}\big(\mathcal{U}_p, \underline{\mathcal{U}}'_r \big),
\end{equation}
by specifying the composition
\begin{equation}\nonumber
\begin{split}
\circ : M_{qr}^{k} \times M_{pq}^k &\rightarrow M_{pr}^k.\\
\end{split}
\end{equation}
For each composable pair
\[
\big( (\mathscr{W}_{qr}, \Phi_{qr}^k, \widehat{\Phi}_{qr}^{k}),  (\mathscr{W}_{pq}, \Phi_{pq}^k, \widehat{\Phi}_{pq}^{k}) \big) \in  M_{qr}^{k} \times M_{pq}^k,
\]
we set
\begin{equation}\label{eqcb}
\begin{split}
(\mathscr{W}_{qr}, \Phi_{qr}^k, \widehat{\Phi}_{qr}^{k}) \circ & (\mathscr{W}_{pq}, \Phi^k_{pq}, \widehat{\Phi}_{pq}^{k})\\
& := \big( \mathscr{W}_{pqr}, \Phi^k_{qr} \circ (\Phi^k_{pq}|_{\mathscr{W}_{pqr}}),  \widehat{\Phi}_{qr}^{k} \circ (\widehat{\Phi}_{pq}^{k}|_{\mathscr{W}_{pqr}})\big).
\end{split}
\end{equation}
We now explain the meaning of each argument on the right-hand side of (\ref{eqcb}).

For the base, we define
\begin{equation}\nonumber
\mathscr{W}_{pqr} : = (\Phi_{pq}^{0})^{-1}(\mathscr{W}_{qr}) \cap \mathscr{W}_{pq},\\
\end{equation}
which lies in $ \mathscr{S}(\mathcal{U}_p, {\underline{\mathcal{U}}}'_r)$ by Assumption \ref{contrassmp} on contractibility. Indeed,
\begin{equation}\nonumber
\begin{split}
 \psi_p\left(s^{-1}_p(0) \cap \mathscr{W}_{pqr}\right) & = \psi_p\left(s^{-1}_p(0) \cap (\Phi_{pq}^{0})^{-1}(\mathscr{W}_{qr}) \cap \mathscr{W}_{pq}\right) \\
&= \psi_p\left( (\Phi_{pq}^{0})^{-1}(\underline{s}^{-1}_q(0) \cap \mathscr{W}_{qr}) \cap s^{-1}_p(0) \cap \mathscr{W}_{pq}\right)\\
&= \psi_p\left( (\Phi_{pq}^{0})^{-1}(\underline{s}^{-1}_q(0) \cap \mathscr{W}_{qr})\right) \cap \psi_p\left(s^{-1}_p(0) \cap \mathscr{W}_{pq}\right)\\
&= \underline{\psi}_q\left(\underline{s}^{-1}_q(0) \cap \mathscr{W}_{qr}) \cap \psi_p(s^{-1}_p(0) \cap \mathscr{W}_{pq}\right)\\
&\subset \underline{\psi}_q\left(\underline{s}_q^{-1}(0)\right),
\end{split}
\end{equation}
using axioms (i) through (iii) of (b) above, together with the homeomorphism property of $\psi_p$ and $\underline{\psi}_{q}.$

For a fixed vector $\vec{t} \in \Delta^k,$ we write
\[
\Phi^k_{pq,\vec{t}} := \Phi^k_{pq}\left(\cdot, \vec{t}\right): \mathscr{W}_{pq} \rightarrow \underline{U}_q,
\]
and, fixing a basis $\left\{\gamma_i\right\}_i$ of $\Omega^*\left(\Delta^k\right)$ together with
\[
\left\{\widehat{\Phi}_{pq,x,i}^{k} : \mathcal{C}'_{q,\Phi_{pq}^0(x)} (=  \mathcal{C}'_{q,\Phi_{pq}^0(x), \Phi_{pq}^k}) \rightarrow \mathcal{C}_{p,x}\right\}_i,
\]
we write
\[
\widehat{\Phi}_{pq,x}^{k} = \sum\limits_i \gamma_i \otimes \widehat{\Phi}_{pq,x,i}^{k}.
\]

We then define
\begin{equation}\label{phpkc}
\begin{split}
\Phi^k_{qr} \circ \left(\Phi^k_{pq}|_{\mathscr{W}_{pqr}}\right) \left(x, \vec{t}\right) &:= \left(\left(\Phi^k_{qr,\vec{t}} \circ \Phi^k_{pq,\vec{t}}|_{\mathscr{W}_{pqr}}\right)(x), \vec{t}\right),\\
\widehat{\Phi}^k_{pq,x} \circ \left(\widehat{\Phi}^{k}_{qr, \Phi_{pq}^0(x)}  |_{\mathscr{W}_{pqr}}\right) (\xi)
&:= \sum\limits_i \gamma_i \otimes \left(\widehat{\Phi}^{k}_{pq,x,i} \circ \widehat{\Phi}^{k}_{qr, \Phi^0_{pq}(x),i}|_{\mathscr{W}_{pqr}} \right)(\xi)
\end{split}
\end{equation}
for $\left(x, \vec{t}\right) \in \mathscr{W}_{pqr} \times \Delta^k$ and
$\xi \in \mathcal{C}'_{r, \Phi_{qr}^0(x)}.$

For each object $\mathcal{U}_p,$ the identity morphism $\left(U_p, \mathrm{id}_{p}^k, \left\{\widehat{\mathrm{id}}^{k}_{p,x}\right\}\right) \in M_{pp}^k$ is defined as follows: we set
$$
\mathrm{id}_{p,x}^k : U_p \times \Delta^k \rightarrow U_p,\quad
\mathrm{id}_{p,x}^k (x,\vec{t}) = x, \ \text{ for each } \vec{t} \in \Delta^{k},
$$
and define the map
$$
\widehat{\mathrm{id}}^{k}_{p,x} : \mathcal{C}_{p,x} \rightarrow \Omega^* (\Delta^k) \otimes \mathcal{C}_{p,x}
$$
by
$$
\widehat{\mathrm{id}}^{k}_{p,x} =
\begin{cases}
0 &\text{ if } k \geq 1,\\
\widehat{\mathrm{id}}_{\mathcal{C}_{p,x}} &\text{ if } k = 0.
\end{cases}
$$
One readily checks that the composition defined above is associative, and that the identity morphism is indeed an identity for this composition.

The following lemma is immediate.
\begin{lem}\label{lss}
$\text{Mor}_{\mathcal{K}_{\mathfrak{X}}}\left( {\mathcal{U}}_{{p}},  {\mathcal{U}}_{{q}} \right)$ is a simplicial set, with face and degeneracy maps given, for each $k \geq 0,$ by
\[
\partial_i : M^k_{pq} \rightarrow  M_{pq}^{k-1}; \quad
\partial_i\left(\mathscr{W}_{pq}, \Phi_{pq}^k, \widehat{\Phi}_{pq}^k\right) =
 \left(\mathscr{W}_{pq}, \Phi_{pq}^k \circ d_i,  d^*_i \circ \widehat{\Phi}_{pq}^k\right), \ i = 0, \cdots, k,
\]
and
\[
\sigma_i : M_{pq}^k  \rightarrow  M_{pq}^{k+1}; \quad
\sigma_i\left(\mathscr{W}_{pq}, \Phi_{pq}^k, \widehat{\Phi}_{pq}^k\right) = \left(\mathscr{W}_{pq}, \Phi_{pq}^k \circ s_i, s^*_i \circ \widehat{\Phi}_{pq}^k\right), \ i = 0, \cdots, k.
\]
Here, $d_i: \Delta^{k-1} \rightarrow \Delta^{k}$ and $s_i : \Delta^{k+1} \rightarrow \Delta^{k}$ are the standard coface and codegeneracy maps on the standard simplices, and $d_i^* : \Omega^*(\Delta^{k}) \rightarrow \Omega^*(\Delta^{k-1})$ and $s_i^* : \Omega^*(\Delta^{k}) \rightarrow \Omega^*(\Delta^{k+1})$ are the induced maps on de Rham complexes. Moreover, the compositions are compatible with this simplicial structure.
\end{lem}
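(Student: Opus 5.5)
The statement to prove is Lemma \ref{lss}. The plan is to check directly that the face and degeneracy maps preserve the conditions (a)--(c) defining $M^k_{pq}$. The simplicial identities will then be inherited from the cosimplicial object $[k]\mapsto \Delta^k$ and the simplicial dg algebra $[k]\mapsto\Omega^*(\Delta^k)$. Finally, compatibility with composition will follow from the fact that composition (\ref{phpkc}) is defined pointwise in $\vec t$ and linearly in the form coefficients $\gamma_i$.

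First, well-definedness. Take $(\mathscr{W}_{pq},\Phi^k_{pq},\widehat\Phi^k_{pq})\in M^k_{pq}$. Condition (a) is untouched, since $\mathscr{W}_{pq}$ is unchanged. For (b), precomposition with $d_i$ or $s_i$ preserves smoothness and the vanishing-locus condition (ii). Conditions (i) and (iii) only involve $\Phi^0$, which is recovered by iterated faces. Condition (v) gives $\Phi^k\circ d_i=\Phi^{k-1}$ under the identification of the lower-degree components, so the $\Phi^0$ of the new element equals the old $\Phi^0$. For surjectivity (iv), I will note that $s_i$ is surjective, so the degeneracy case is immediate. For the face case, $\Phi^{k}\circ d_i$ restricted to $W_x\times\Delta^{k-1}$ equals $\Phi^{k-1}$ by (v), and this is surjective by (iv) in degree $k-1$. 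For (c), $d_i^*$ and $s_i^*$ are dg algebra maps, so $d_i^*\otimes\mathrm{id}$ and $s_i^*\otimes\mathrm{id}$ are strict $L_\infty[1]$-morphisms $\Omega^*(\Delta^k)\otimes\mathcal{C}_{p,x}\to\Omega^*(\Delta^{k\mp1})\otimes\mathcal{C}_{p,x}$. Hence their composites with $\widehat\Phi^k_{pq,x}$ are $L_\infty[1]$-morphisms, and these are exactly the faces and degeneracies of $k$-homotopies in Example \ref{hhex}. The degree-zero component stays $\widehat\Phi^0_{pq,x}$ (iterated restriction to a vertex), so it is still a quasi-isomorphism.

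Second, the simplicial identities. Since $\partial_i(\Phi)=\Phi\circ(\mathrm{id}\times d_i)$, we have $\partial_i\partial_j(\Phi)=\Phi\circ(\mathrm{id}\times d_j d_i)$. The cosimplicial identities $d_jd_i=d_id_{j-1}$ for $i<j$, and the analogous ones involving $s_j$, then give $\partial_i\partial_j=\partial_{j-1}\partial_i$ and the rest. By contravariance, the same holds for $d^*,s^*$ acting on $\Omega^*(\Delta^\bullet)$.

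Third, compatibility with composition, meaning $\partial_i(g\circ f)=\partial_i g\circ\partial_i f$ and likewise for $\sigma_i$. On the base, $(\Phi^k_{qr,\vec t}\circ\Phi^k_{pq,\vec t})$ evaluated at $\vec t=d_i(\vec u)$ is $\Phi^{k-1}_{qr,\vec u}\circ\Phi^{k-1}_{pq,\vec u}$. The domain $\mathscr{W}_{pqr}$ depends only on $\Phi^0$, which faces preserve. On the $L_\infty$ side, I apply $d_i^*$ to $\sum_i\gamma_i\otimes(\cdots)$ and must show it agrees with composing the separately restricted components. I expect this to be the main obstacle. Formula (\ref{phpkc}) composes coefficientwise in a chosen basis, and $d_i^*$ is multiplicative but does not send basis elements to basis elements. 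My first attempt will be to read (\ref{phpkc}) as the $\Omega^*(\Delta^k)$-linear composition and then use the fact that $d_i^*$ is an algebra map, so that it commutes with this extended composition. If that reading fails, I will choose the bases $\{\gamma_i\}$ compatibly, for example the Whitney/polynomial forms in barycentric coordinates, which are mapped to basis elements or zero by $d_i^*$ and $s_i^*$.
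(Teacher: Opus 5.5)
The paper gives no argument for this lemma beyond calling it immediate, and in the proof of Theorem \ref{kcpx} it calls compatibility with composition evident. Your direct verification is therefore the intended route. With the definitions as written, however, two of its steps fail.

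\textbf{The composition step.} You rightly single this out, but your fallback cannot rescue formula (\ref{phpkc}). That formula is a \emph{diagonal} composite: the $\gamma_i$-coefficient of $\widehat{\Phi}^k_{pq,x}$ is composed only with the $\gamma_i$-coefficient of $\widehat{\Phi}^k_{qr}$.
\begin{itemize}
\item It is not a degree-$0$ map. In any homogeneous basis, the coefficient of a $1$-form $\gamma_i$ in a degree-$0$ map into $\Omega^*(\Delta^k)\otimes\mathcal{C}_{p,x}$ has degree $-1$. So $\gamma_i$ tensored with the composite of two such coefficients has degree $-1$ whenever the homotopy components are nonzero.
\item Choosing bases does not help. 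Write the two morphisms as $\sum_j\gamma_j\otimes A_j$ and $\sum_j\gamma_j\otimes B_j$. Any $d_i^*$ that sends two basis elements to the same element produces cross terms $A_jB_{j'}$, $j\neq j'$, in the composite of the faces. These terms are absent from the face of the composite.
\item The barycentric bases you name do exactly this. In the monomial basis $\{t_1^b\}$, every $t_1^b$ restricts to $1$ at the vertex $t_1=1$. They are also not sent to basis elements by $s_i^*$, e.g.\ $s_0^*t_0=t_0+t_1$. And polynomial or Whitney forms do not span the smooth forms anyway.
\end{itemize}
So your first option is not a reading of (\ref{phpkc}) but a replacement for it. It is the right replacement, and you should commit to it explicitly. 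Define the composite as $\widetilde{\Phi}_{pq,x}\circ\widehat{\Phi}^k_{qr,\Phi^0_{pq}(x)}$, where $\widetilde{\Phi}_{pq,x}$ is the $\Omega^*(\Delta^k)$-linear extension of $\widehat{\Phi}^k_{pq,x}$. Compatibility then reduces to the identity $(\rho\otimes\mathrm{id})\circ\widetilde{F}=\widetilde{(\rho\otimes\mathrm{id})\circ F}\circ(\rho\otimes\mathrm{id})$ for the dg-algebra maps $\rho=d_i^*,s_i^*$, together with your pointwise-in-$\vec t$ argument on the base.

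\textbf{Degeneracies and condition (b)(v).} Your well-definedness paragraph never checks (b)(v) for $\sigma_i$, and it fails there. Condition (v) says that all $k+1$ faces of $\Phi^k_{pq}$ coincide with one and the same $\Phi^{k-1}_{pq}$. Take the base component $\Phi^1$ of an element of $M^1_{pq}$; it is a loop of maps based at $\Phi^0$. Since $s_0d_0=s_0d_1=\mathrm{id}$ and $s_0d_2=d_1s_0$, the map $\Phi^1\circ(\mathrm{id}\times s_0)$ has faces $\Phi^1$, $\Phi^1$ and $\Phi^0\circ s_0$. Hence $\sigma_0\Phi^1\in M^2_{pq}$ only if $\Phi^1$ is constant in $t$.

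More generally, $\sigma_i\Phi^k$ satisfies (v) only if $\Phi^k$ is already a degeneracy of $\Phi^{k-1}$. As defined, $\coprod_k M^k_{pq}$ is closed under the $\sigma_i$ only if all its elements are constant in $\vec t$. No more careful check can fix this; the definition itself has to change. For example, drop (v), and impose the conditions that refer to $\Phi^0_{pq}$ and $\widehat{\Phi}^0_{pq,x}$ on every vertex restriction of $\Phi^k_{pq}$ and $\widehat{\Phi}^k_{pq,x}$. Such conditions are automatically stable under $\partial_i$ and $\sigma_i$, because every vertex of a face or degeneracy of a simplex is a vertex of that simplex. This also repairs your quasi-isomorphism claim, which silently uses a vertex compatibility for $\widehat{\Phi}^k_{pq,x}$ that (c) does not state.
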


It follows from the preceding lemma that $\mathcal{K}_{\mathfrak{X}}$ is a simplicially enriched category. In fact, it enjoys a particularly well-behaved property, which will play a central role in the arguments of the next section.

\begin{thm}\label{kcpx}
$\mathcal{K}_{\mathfrak{X}}$ is a simplicially enriched category, and moreover a Kan-complex-enriched category. That is, the morphism space $\mathrm{Mor}_{\mathcal{K}_{\mathfrak{X}}}( \mathcal{U}_{p}, \mathcal{U}_{q})$ is a Kan complex for each pair $p,q \in X$.
\end{thm}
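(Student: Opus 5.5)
We prove the Kan condition by treating the two components of a simplex in $M_{pq}^\bullet$ separately: the base map $\Phi_{pq}^k$ and the $L_\infty[1]$-component $\widehat{\Phi}^k_{pq,x}$. Simplicial enrichment is already given by Lemma \ref{lss}; what remains is that every horn $\Lambda^k_j \to \mathrm{Mor}_{\mathcal{K}_{\mathfrak{X}}}(\mathcal{U}_p,\mathcal{U}_q)$ extends to a $k$-simplex, for $0 \le j \le k$.

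First fix the combinatorics. The face maps of Lemma \ref{lss} do not change $\mathscr{W}_{pq}$, so the $k$ faces of a horn share one open set $\mathscr{W}=\mathscr{W}_{pq}$. A horn is then a family $\{(\mathscr{W},\Phi^{k-1}_i,\widehat{\Phi}^{k-1}_i)\}_{i\neq j}$ satisfying the usual compatibilities $\partial_s(\cdot)_t=\partial_{t-1}(\cdot)_s$.

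\textbf{Base component.} We need a smooth $\Phi^k:\mathscr{W}\times\Delta^k\to \underline{U}_q$ with prescribed restrictions to the faces $\mathscr{W}\times d_i(\Delta^{k-1})$, $i\neq j$. Two features make this possible:
\begin{itemize}
\item $\mathscr{W}\times\Lambda^k_j$ is a retract of $\mathscr{W}\times\Delta^k$.
\item Condition (b)(v) forces all faces to reduce, through iterated faces, to the same $\Phi^0$.
\end{itemize}
We take $\Phi^k := \Phi^{\Lambda}\circ(\mathrm{id}\times r)$, where $r:\Delta^k\to\Lambda^k_j$ is a retraction, smoothed near the corners by a collar or a standard reparametrization so that smoothness is preserved. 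The resulting map still sends the zero locus into $\underline{s}_q^{-1}(0)$, since each face does. It also still surjects onto $W'_{\Phi^0(x)}$, since it contains a face that already surjects. The new face $\partial_j$ is then $\Phi^k\circ d_j$, so it automatically satisfies (b).

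\textbf{$L_\infty[1]$-component.} This is the main obstacle. For each $x$, we have a compatible family of $L_\infty[1]$-$(k-1)$-homotopies $\mathcal{C}'_{q,\Phi^0(x)}\to\Omega^*(\Delta^{k-1})\otimes\mathcal{C}_{p,x}$ indexed by $i\neq j$. In other words, we have an $L_\infty[1]$-morphism into $\Omega^*(\Lambda^k_j)\otimes\mathcal{C}_{p,x}$, where $\Omega^*(\Lambda^k_j)$ denotes the polynomial/smooth forms on the horn, given as an equalizer of forms on faces. We must lift it along the restriction $\Omega^*(\Delta^k)\to\Omega^*(\Lambda^k_j)$.

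The plan is to invoke the higher homotopy theory of \cite{Kim2} recalled in Appendix A, namely the fact that the simplicial set of $L_\infty[1]$-morphisms $\mathcal{C}'\to\Omega^*(\Delta^\bullet)\otimes\mathcal{C}$ is a Kan complex. The argument for that runs as follows:
\begin{itemize}
\item The restriction $\Omega^*(\Delta^k)\to\Omega^*(\Lambda^k_j)$ is a surjective quasi-isomorphism of commutative dgas; this is the Sullivan extendability of forms.
\item Tensoring with $\mathcal{C}_{p,x}$ yields a surjective quasi-isomorphism (a trivial fibration) of $L_\infty[1]$-algebras.
\item Lifting morphisms out of the quasi-free object $\mathcal{C}'$ along such fibrations is done arity by arity. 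At each Taylor component, the obstruction is a cocycle in the kernel, which is acyclic, and so it can be killed.
\end{itemize}

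Since $\widehat{\Phi}^0$ is the given quasi-isomorphism and is unchanged, condition (c) is preserved. We must also check that the lift can be chosen smoothly, or at least consistently, in $x\in s_p^{-1}(0)\cap\mathscr{W}$; here the contractibility of $\mathscr{W}$ from Assumption \ref{contrassmp} is used. Finally, we check that the resulting triple lies in $M^k_{pq}$ and restricts to the given horn, which completes the Kan filling.
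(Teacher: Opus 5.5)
\textbf{The gap is in the base component.} For $k\ge 2$ there is no \emph{smooth} retraction $r\colon\Delta^k\to\Lambda^k_j$. Every edge of $\Delta^k$ at the vertex $v_j$ lies in the horn, so $r|_{\Lambda^k_j}=\mathrm{id}$ forces $Dr(v_j)=\mathrm{id}$. By the inverse function theorem, $r$ would then map a neighbourhood of $v_j$ in $\Delta^k$ onto a set containing a nonempty open subset of the affine span, which cannot lie in $\Lambda^k_j$. So the filler cannot be $\Phi^{\Lambda}\circ(\mathrm{id}\times r)$ with $r$ smooth. Any ``smoothing near the corners'' that keeps the prescribed faces loses the inclusion $\Phi^k(\mathscr{W}\times\Delta^k)\subset\Phi^{\Lambda}(\mathscr{W}\times\Lambda^k_j)$.

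That inclusion is exactly what your verification of (b)(ii) and (b)(iv) rests on (``since each face does''), and nothing in your argument replaces it. Condition (b)(ii) is the serious one. $\underline{s}_q^{-1}(0)$ is an arbitrary, in general singular, closed subset of $\underline{U}_q$. So the usual device of extending in a chart or an ambient $\mathbb{R}^N$ and projecting back is unavailable.

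Condition (b)(iv) is also at risk. It asks that the image of $W_x\times\Delta^k$ be exactly the open set $W'_{\Phi^0(x)}$, which a perturbation can destroy. Condition (c) in turn uses (b)(iv) to identify $\mathcal{C}'_{q,\Phi^0(x),\Phi^k}$ with $\mathcal{C}'_{q,\Phi^0(x)}$.

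As written, then, your base horn-filler is not shown to lie in $M^k_{pq}$. You need either an extension argument that genuinely respects the zero locus, or a change of framework: piecewise-smooth base maps, or simplices required to be collared near their faces, so that composing with a retraction stays smooth. The paper's proof does not supply this either. For the base it only cites the contractibility of $\mathscr{W}$, which concerns the source and does not address extending $\Delta^k$-families subject to (b)(ii) and (b)(iv).

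\textbf{Your $L_\infty[1]$ argument is correct and takes a different route from the paper.} The paper appeals to Proposition~\ref{pphhe}. That result fills a \emph{full} compatible boundary of homotopies between \emph{quasi-isomorphisms}, and produces some model $\mathfrak{C}^{(n+1)}$ of $\Delta^{n+1}\times C$ rather than $\Omega^*(\Delta^{n+1})\otimes C$ itself. You instead lift along the surjective quasi-isomorphism $\Omega^*(\Delta^k)\otimes\mathcal{C}_{p,x}\to\Omega^*(\Lambda^k_j)\otimes\mathcal{C}_{p,x}$. This fills horns directly, lands in exactly the target required by (c), and needs no quasi-isomorphism hypothesis. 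At each arity the obstruction lies in $\mathrm{Hom}\bigl(S^n\mathcal{C}'_{q,\Phi^0(x)},\,K\otimes\mathcal{C}_{p,x}\bigr)$, where $K=\ker\bigl(\Omega^*(\Delta^k)\to\Omega^*(\Lambda^k_j)\bigr)$; this complex is acyclic over a field.

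One correction: (c) imposes no compatibility between different $x$, and the algebras $\mathcal{C}_{p,x}$ vary with $x$. So the lifts can simply be chosen pointwise, and the contractibility of $\mathscr{W}$ plays no role at that step.
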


\begin{proof}
The first statement follows from Lemma \ref{lss}. It remains only to verify that the compositions (\ref{phpkc}) are compatible with the face and degeneracy maps, which is evident from their construction. The second statement follows from the contractibility of each $\mathscr{W}$ in (\ref{mpqk}) (for the base component), together with Proposition \ref{pphhe} (for the $L_{\infty}[1]$-component).
\end{proof}

\section{Higher homotopies and the simplicial nerve
$N(\mathcal{K}_{{\mathfrak{X}}})$}
In this section, we introduce the simplicial nerve construction $N(\CK_{\mathfrak{X}})$ associated with the chart category $\CK_{\mathfrak{X}}$ arising from a given Kuranishi space $\mathfrak{X}$. Here, the standard notion of a cocycle condition is replaced by a more relaxed version, called a \textit{higher cocycle condition}, which explicitly encodes the higher homotopy information.

\subsection{Simplicial nerve construction $N(\mathcal{K}_{\mathfrak{X}})$}
As a preliminary step toward the higher cocycle condition, we consider higher homotopies, defined by exploiting the simplicial set structure on the chart category that is induced by the simplicially enriched category $\mathcal{K}_{\mathfrak{X}}.$

\begin{defn}[$m$-homotopies for morphisms of $\mathcal{K}_{\mathfrak X}$]\label{mhtpy}
Let
\[
\Phi_i = \left\{\left(\mathscr{W}_{pq}, \Phi_{i,pq}^{k}, \left\{\widehat{\Phi}_{i, pq,x}^{k}\right\}\right)\right\}_{k \geq 0}, \ i =0, \cdots, m
\]
be morphisms of $\mathcal{K}_{\mathfrak{X}}.$ We say they are $m$\textit{-homotopic} if there exists a family of tuples
\[
\overline{\Phi}^{(m)} = \left\{\left(\mathscr{W}_{pq}, \overline{\Phi}^k_{pq}, \left\{\widehat{\overline{\Phi}}^{k}_{pq,x}\right\}\right)\right\}_{k \geq 0}
\]
consisting of:
\begin{enumerate}
\item[--]
a smooth map
\[
\overline{\Phi}_{pq}^k : \mathscr{W}_{pq} \times \Delta^k \times \Delta^m \rightarrow U_q,
\]
\item[--]
for each $x \in s^{-1}_p(0) \cap \mathscr{W}_{pq},$ an $\Omega^* (\Delta^m)$-family of $L_{\infty}[1]$-morphisms and $L_{\infty}[1]$-$k$-homotopies,
\[
\widehat{\overline{\Phi}}_{pq,x}^{k} : \mathcal{C}'_{q, \Phi_{pq}^0(x), \Phi^k_{pq}} (= \mathcal{C}'_{q,\Phi_{pq}^0(x)}) \rightarrow \Omega^* (\Delta^m) \otimes \Omega^* (\Delta^k)  \otimes \mathcal{C}_{p,x},
\]
in the sense of Example \ref{hhex}, satisfying:
\begin{enumerate}[label=(\roman*)]
\item conditions analogous to those imposed on morphisms in Definition \ref{ksctr};
\item $\left(\overline{\Phi}^k_{pq}, \left\{\widehat{\overline{\Phi}}^{k}_{pq,x}\right\}\right)\bigg|_{v_i(\Delta^m)} = \left(\Phi_{i, pq}^{k}, \left\{\widehat{{\Phi}}_{i, pq,x}^{k}\right\}\right), \ i =0, \cdots, m,$ where $v_i(\Delta^m)$ denotes the $i$-th vertex of $\Delta^m.$
\end{enumerate}
\end{enumerate}
We call $\overline{\Phi}^{(m)}$ an $m$\textit{-homotopy} of the morphisms $\Phi_0, \cdots, \Phi_m.$
\end{defn}

\begin{prop}
Given any morphisms $\Phi_0, \cdots, \Phi_m$ of $\mathcal{K}_{\mathfrak{X}}$ as in Definition \ref{mhtpy}, an $m$\textit{-homotopy} $\overline{\Phi}^{(m)}$ exists for every $m \geq 1$.
\end{prop}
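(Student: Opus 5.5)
We construct the $m$-homotopy $\overline{\Phi}^{(m)}$ separately on the base and on the $L_{\infty}[1]$-component, and then check the conditions of Definition \ref{mhtpy}. The base is handled by contractibility; the algebraic part is handled by the fact that homotopy theory of $L_\infty[1]$-quasi-isomorphisms is essentially trivial (Corollary \ref{anhp}, Proposition \ref{pphhe}).

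\emph{Base component.} The morphisms $\Phi_0,\dots,\Phi_m$ share the domain $\mathscr{W}_{pq}$, and $\mathscr{W}_{pq}$ is contractible by (\ref{spq}). The target $U_q$ can be replaced by the union of the presymplectic balls $W'_{\Phi^0_{pq}(x)}\simeq B^n$ around the relevant zeros, and this union is contractible after shrinking, using Assumption \ref{contrassmp}. Hence the maps
\[
\Phi^k_{i,pq} : \mathscr{W}_{pq}\times\Delta^k\to U_q, \quad i=0,\dots,m,
\]
are mutually homotopic through maps compatible with the face maps in $\Delta^k$.

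We build $\overline{\Phi}^k_{pq}$ by induction on $k$ and on the skeleta of $\Delta^m$. On the vertices $v_i(\Delta^m)$ we prescribe $\Phi_{i,pq}^k$. We extend over each higher face of $\Delta^m$ by a smooth filling, for instance straight-line interpolation in a ball chart, together with a collar to keep smoothness. We then impose condition (v) of (b), $\overline{\Phi}^{k}_{pq}\circ(\mathrm{id}\times d_i\times \mathrm{id}) = \overline{\Phi}^{k-1}_{pq}$, using the extension property of the pair $(\Delta^k\times\Delta^m, \partial\Delta^k\times\Delta^m\cup\Delta^k\times\partial\Delta^m)$. The conditions (ii)--(iv) on zeros and surjectivity must be preserved; we arrange this by holding the maps fixed on $s_p^{-1}(0)\cap\mathscr{W}_{pq}$. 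This is possible because all $\Phi^0_{i,pq}$ restrict to the zero locus in the same way, via $\underline{\psi}_q^{-1}\circ\psi_p$.

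\emph{$L_\infty[1]$-component.} For each $x\in s_p^{-1}(0)\cap\mathscr{W}_{pq}$, the $\widehat{\Phi}^0_{i,pq,x}$ are quasi-isomorphisms $\mathcal{C}'_{q,\Phi^0_{pq}(x)}\to\mathcal{C}_{p,x}$ by (c). Corollary \ref{anhp} says that any simplex whose vertices are assigned quasi-isomorphisms can be filled by a higher homotopy. Applying it yields an $\Omega^*(\Delta^m)$-family
\[
\widehat{\overline{\Phi}}^0_{pq,x}: \mathcal{C}'_{q,\Phi^0_{pq}(x)}\to\Omega^*(\Delta^m)\otimes\mathcal{C}_{p,x}
\]
restricting to $\widehat{\Phi}^0_{i,pq,x}$ at the vertices. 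For $k\ge1$ the data $\widehat{\Phi}^k_{i,pq,x}$ together with the already constructed lower-$k$ families give a horn, or boundary, in the simplicial set of $L_\infty[1]$-homotopies of Example \ref{hhex}, now valued in $\Omega^*(\Delta^m)\otimes\Omega^*(\Delta^k)\otimes\mathcal{C}_{p,x}$. This simplicial set is Kan by Proposition \ref{pphhe}, and in the quasi-isomorphism case it is contractible by Corollary \ref{anhp}, so every such boundary fills. We fill inductively in $k$, compatibly with faces.

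\emph{Main obstacle.} The hard step is the $L_\infty$ filling for $k\ge1$. We need the relevant mapping space to be not merely Kan but to have trivial homotopy groups, so that whole boundaries, not only horns, can be filled. This is exactly the content of Corollary \ref{anhp}, and we rely on it there.

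A secondary difficulty is keeping the algebraic fillings compatible with the chosen base fillings through the completions $\mathcal{C}'_{q,\Phi^0_{pq}(x),\Phi^k_{pq}}$. Fixing the base on the zero locus, and using Lemma \ref{surjcp} together with the surjectivity condition (iv), identifies these completions with $\mathcal{C}'_{q,\Phi^0_{pq}(x)}$. As a result the two constructions decouple.
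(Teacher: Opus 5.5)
Your overall strategy is the paper's. Its proof is the one-line remark that the base maps are handled by contractibility (Assumption \ref{contrassmp}) and the $L_{\infty}[1]$-components by Corollary \ref{anhp}.

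The trouble is the step you add to preserve conditions (ii)--(iv) of (b). You hold the maps fixed on $s_p^{-1}(0)\cap\mathscr{W}_{pq}$, on the grounds that all $\Phi^0_{i,pq}$ agree there via $\underline{\psi}_q^{-1}\circ\psi_p$. But condition (iii) constrains only $k=0$. Even then it fixes $\Phi^0_{i,pq}(x)$ only up to the isotropy group of $\underline{\mathcal{U}}_q$, since $\underline{\psi}_q$ is injective only on the orbit space. For $k\geq 1$, the $\Phi^k_{i,pq}$ need only send the zero locus into $\underline{s}_q^{-1}(0)\cap W'_{\Phi^0_{pq}(x)}$, with faces $\Phi^{k-1}_{i,pq}$. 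So for different $i$ they can be genuinely different families inside the zero set, and they cannot be held fixed. Keeping (ii) along the $m$-homotopy would require interpolating them inside $\underline{s}_q^{-1}(0)\cap W'_{\Phi^0_{pq}(x)}$, compatibly with faces. Contractibility of $\mathscr{W}_{pq}$ or of $\underline{U}_q$ does not give this. Fixing the maps on the zero set also says nothing about (iv), which requires all of $W_x\times\Delta^k\times\Delta^m$ to land in the ball $W'_{\Phi^0_{pq}(x)}$. The paper's one-line proof is silent on these constraints too. So the defect lies in your added bookkeeping rather than in the shared strategy, but as written that step does not work.

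Two smaller points. First, the contractible target you want is the $|\alpha|=0$ case of Assumption \ref{contrassmp} ($U_\alpha=U_q$, at least for charts of the chosen atlas). A union of presymplectic balls around the zeros need not be contractible, nor need it contain the images of the $\Phi^k_{i,pq}$. Second, the boundary filling you need for $k\geq 1$ is Proposition \ref{pphhe}, not Corollary \ref{anhp}. The corollary only prescribes vertices, which is all that Definition \ref{mhtpy}(ii) demands. Both results yield a filler in \emph{some} model of $\Delta^n\times C$. Landing in $\Omega^*(\Delta^m)\otimes\Omega^*(\Delta^k)\otimes\mathcal{C}_{p,x}$ is therefore an extra identification, one the paper also makes tacitly.
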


\begin{proof}
This follows from Corollary \ref{anhp}, together with Assumption \ref{contrassmp} that the open subsets $U_{pq}$ are all contractible.
\end{proof}

We briefly recall the notion of simplicial nerves. Let $\mathcal{C}$ be a simplicially enriched category. The $n$-simplices of the \textit{simplicial nerve} $N(\mathcal{C})$ of $\mathcal{C}$ are determined by
\begin{equation}\nonumber
\text{Hom}_{sSet} \big(\Delta^n, N(\mathcal{C}) \big) := \text{Hom}_{Cat_{\Delta}}\big( \mathfrak{C}[\Delta^n], \mathcal{C} \big).
\end{equation}

Here, $\mathfrak{C}[\Delta^n]$ is the category with
\begin{enumerate}
\item[--] $\text{Ob}(\mathfrak{C}[\Delta^n]) = \{0, \cdots, n\},$
\item[--] $\text{Mor}_{\mathfrak{C}[\Delta^n]}(i,j) := \begin{cases}
\emptyset \quad & i > j,\\
N(P_{i,j}) \quad & i \leq j,
\end{cases}$
\end{enumerate}
where $P_{i,j}$ is the (ordinary) nerve of the partially ordered set
\begin{equation}\nonumber
P_{i,j} := \big\{ I \subset \{0, \cdots, n\} \mid \text{if } i, j \in I \text{ and }  k \in I, \text{ then } i \leq k \leq j \big\},
\end{equation}
ordered by inclusion. In fact, $N(P_{i,j}) \simeq (\Delta^1)^{j-i-1}$ (i.e., a cube) as simplicial sets (cf.\ \cite{Lurie}).

\begin{cor}
$N(\mathcal{K}_{\mathfrak{X}})$ is an $\infty$-category, where $N(\cdot)$ denotes the simplicial nerve construction.
\end{cor}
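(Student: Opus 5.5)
The plan is to invoke the standard result of Cordier–Porter, as presented in \cite{Lurie}: the simplicial (homotopy coherent) nerve of a simplicially enriched category whose mapping spaces are all Kan complexes is a quasi-category, i.e.\ an $\infty$-category. Since Theorem \ref{kcpx} establishes exactly this Kan-enrichment for $\mathcal{K}_{\mathfrak{X}}$, the corollary should follow formally. The remaining work is to check that the hypotheses of that result are met on the nose.

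Concretely, I would proceed as follows.

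\begin{enumerate}[label=(\arabic*)]
\item I recall that $N(\mathcal{K}_{\mathfrak{X}})$ is defined by $\mathrm{Hom}_{sSet}(\Delta^n, N(\mathcal{K}_{\mathfrak{X}})) = \mathrm{Hom}_{Cat_\Delta}(\mathfrak{C}[\Delta^n], \mathcal{K}_{\mathfrak{X}})$. Via the adjunction $\mathfrak{C} \dashv N$, an inner horn $\Lambda^n_i \to N(\mathcal{K}_{\mathfrak{X}})$ with $0<i<n$ corresponds to a simplicial functor $\mathfrak{C}[\Lambda^n_i] \to \mathcal{K}_{\mathfrak{X}}$.

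\item I use the explicit comparison between $\mathfrak{C}[\Lambda^n_i]$ and $\mathfrak{C}[\Delta^n]$. The two have the same objects, and all mapping spaces coincide except $\mathrm{Mor}(0,n)$. There the inclusion $\mathfrak{C}[\Lambda^n_i](0,n) \hookrightarrow (\Delta^1)^{n-1}$ is the cube with the interior and one face (the one opposite the $i$-th vertex direction) removed. Because $0<i<n$, this inclusion is an anodyne map, obtained by filling an open box.

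\item Extending the functor therefore reduces to extending a map from this cube-minus-face into $\mathrm{Mor}_{\mathcal{K}_{\mathfrak{X}}}(\mathcal{U}_{v_0}, \mathcal{U}_{v_n})$ over the full cube. This is possible because that mapping space is a Kan complex by Theorem \ref{kcpx}, and Kan complexes have the extension property against anodyne maps.

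\item Finally, I check that the extension is compatible with composition. No new composites land in $\mathrm{Mor}(0,n)$ from elsewhere, since composition into $(0,n)$ comes only from pairs $(0,j),(j,n)$, and those faces already lie in the horn data. Hence the extended assignment is a simplicial functor $\mathfrak{C}[\Delta^n] \to \mathcal{K}_{\mathfrak{X}}$, which gives the required horn filler.
\end{enumerate}

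The main obstacle is not the formal argument but ensuring that $\mathcal{K}_{\mathfrak{X}}$ is genuinely a simplicially enriched category in the strict sense, so that the standard result applies. Two issues need attention.

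\begin{itemize}
\item Composition (\ref{eqcb}) shrinks the domain to $\mathscr{W}_{pqr}$, so I must confirm that composition is strictly associative and unital at the level of these tuples. Lemma \ref{lss} and the remark that associativity is "readily checked" provide this. I would also verify that the identity tuple lies in $M^k_{pp}$.

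\item The mapping spaces are indexed only where $V_p\cap V_q\neq\emptyset$. Where no morphisms exist, I take the empty Kan complex.
\end{itemize}

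Given these points, the corollary is immediate from Theorem \ref{kcpx} together with \cite[Proposition 1.1.5.10]{Lurie}.
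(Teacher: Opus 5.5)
Your proposal is correct and matches the paper's argument: the paper's proof is the one-line appeal to Theorem \ref{kcpx} together with \cite[Proposition 1.1.5.10]{Lurie}, and your steps (1)--(4) faithfully unpack Lurie's proof (the hypothesis $0<i<n$ is what guarantees that only $\mathrm{Mor}(0,n)$ changes, while the open-box inclusion into $(\Delta^1)^{n-1}$ is anodyne for any removed face). Your caution about strict unitality is well placed: as literally written, the paper sets the identity to $0$ in positive simplicial degrees, which is not compatible with the face maps of Lemma \ref{lss} and should instead be the degenerate simplex $1\otimes\widehat{\mathrm{id}}$; but this concerns the input Theorem \ref{kcpx}, not the deduction of the corollary.
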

\begin{proof}
This follows from Theorem \ref{kcpx} and \cite[Proposition 1.1.5.10]{Lurie}.
\end{proof}

The simplices of the simplicial set $N(\mathcal{K}_\mathfrak{X})$ in low degrees are given as follows.
In degree $0$, $N_0(\mathcal{K}_\mathfrak{X}) = \mathrm{Ob}(\mathcal{K}_{\mathfrak{X}}).$ In degree $1$, $N_1(\mathcal{K}_\mathfrak{X}) = \coprod\limits_{\mathcal{U}_p, \underline{\mathcal{U}}_q \in \mathrm{Ob}(\mathcal{K}_{\mathfrak{X}})} \text{Mor}_{\mathcal{K}_\mathfrak{X}}(\mathcal{U}_p, \underline{\mathcal{U}}_q).$ $N_2(\mathcal{K}_\mathfrak{X})$ consists of a pair of morphisms together with a $1$-homotopy between them. $N_3(\mathcal{K}_\mathfrak{X})$ consists of a diagram of five $1$-homotopies, filled in by two $2$-homotopies. $N_{\geq 4}(\mathcal{K}_\mathfrak{X})$ is constructed inductively in a similar manner, which we omit.

\subsection{Definition of higher cocycle condition}
We now define the \textit{higher cocycle condition} for a Kuranishi atlas, motivated by \cite[Definition 4.5.3]{Tu1}.

Denote
\[
\mathcal{O}^{(l)}:= \big\{ \mathcal{U}_p \in \mathrm{Ob}(\mathcal{K}_{\mathfrak{X}}) \mid \mathrm{dim}\,U_p \leq l \big\}.
\]

We define $\mathcal{K}_{\mathfrak{X}}^{(l)}$ to be the subcategory of $\mathcal{K}_{\mathfrak{X}}$ given by
\[
\mathrm{Ob}(\mathcal{K}_{\mathfrak{X}}^{(l)}) := \mathrm{Ob}(\mathcal{K}_{\mathfrak{X}})
\]
and
\[
\mathrm{Mor}_{\mathcal{K}_{\mathfrak{X}}^{(l)}}(\mathcal{U}_p, \underline{\mathcal{U}}_q) := \begin{cases}
\mathrm{Mor}_{\mathcal{K}_{\mathfrak{X}}}(\mathcal{U}_p, \underline{\mathcal{U}}_q) &\text{ if } \mathcal{U}_p, \underline{\mathcal{U}}_q \in \mathcal{O}^{(l)},\\
\{\mathrm{id}_{\mathcal{U}_p}\} &\text{ if } \mathcal{U}_p = \underline{\mathcal{U}}_q \notin \mathcal{O}^{(l)},\\
\emptyset &\text{ otherwise.}
\end{cases}
\]

Observe that the natural inclusion of categories
\[
\mathcal{K}^{(l)}_{\mathfrak{X}} \;\hookrightarrow\; \mathcal{K}^{(l+1)}_{\mathfrak{X}}
\]
induces an embedding of simplicial sets
\[
\mathscr{I}^{(l)} : N_{\bullet}\left(\mathcal{K}^{(l)}_{\mathfrak{X}}\right) \;\hookrightarrow\; N
_{\bullet}\left(\mathcal{K}^{(l+1)}_{\mathfrak{X}}\right).
\]

\begin{defn}[Higher cocycle condition]\label{defhtpgl}
Let $\mathfrak{X}$ be an $L_{\infty}$-Kuranishi space, with an atlas $\widehat{\mathcal{U}}$ representing it, together with a hypercovering. Let $ N\left(\widehat{\mathcal{U}}\right)_{\bullet}$ be the simplicial set defined in Section \ref{hypks}. We call the existence of a family of degree-preserving maps
\begin{equation}\nonumber
\big\{ \mathscr{G}_{\bullet}^{(l)} : N\left(\widehat{\mathcal{U}}\right)_{\bullet} \rightarrow N_{\bullet}\left(\mathcal{K}_{\mathfrak{X}}^{(l)}\right) \big\}_{l \geq 1}
\end{equation}
a \textit{higher cocycle condition} of $\mathfrak{X}$ if it satisfies
\begin{enumerate}[label = (\roman*)]
\item $ \mathscr{G}^{(l)}_{m-1}(\partial_j \alpha)|_{U_{\alpha}} = \partial_j \mathscr{G}^{(l)}_m(\alpha), \ j = 0, \cdots, m,$
\item  $\mathscr{G}^{(l)}_{m+1}(\sigma_j \alpha) = \sigma_j\mathscr{G}^{(l)}_m(\alpha), \ j = 0, \cdots, m,$
\item $\mathscr{G}^{(l+1)}_\bullet = \mathscr{I}^{(l)} \circ \mathscr{G}^{(l)}_\bullet,$
\end{enumerate}
for each $l \geq 1,$ where $(\cdot)|_{U_{\alpha}}$ in (i) denotes the obvious restriction to the open subset $U_{\alpha}.$
\end{defn}

We now state a key theorem of this section.
\begin{thm}[Existence of higher cocycle conditions]\label{htpyexist}
Higher cocycle conditions exist for any Kuranishi space equipped with a hypercovering.
\end{thm}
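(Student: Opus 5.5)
The plan is to build the maps $\mathscr{G}^{(l)}_\bullet$ by a double induction: an outer induction on the dimension bound $l$ and an inner induction on the simplicial degree $m$. The extension at each step will come from the Kan property of the morphism spaces (Theorem \ref{kcpx}) together with the filling statement for quasi-isomorphisms (Corollary \ref{anhp}).

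\emph{Base and outer induction.} For $l$ at least $\max_{p}\dim U_p$ (finite by the standing assumption on atlases), we have $\mathcal{K}^{(l)}_{\mathfrak{X}}=\mathcal{K}_{\mathfrak{X}}$. So I will construct a single simplicial map $\mathscr{G}_\bullet : N(\widehat{\mathcal{U}})_\bullet \to N_\bullet(\mathcal{K}_{\mathfrak{X}})$ and then descend through the filtration. Condition (iii) forces $\mathscr{G}^{(l)}$ to be the corestriction of $\mathscr{G}^{(l+1)}$ whenever the image lies in $N_\bullet(\mathcal{K}^{(l)}_{\mathfrak{X}})$. For simplices involving charts of dimension $>l$, I instead build $\mathscr{G}^{(l)}$ from the bottom up: start at the smallest $l$ and extend, using the fact that $\mathscr{I}^{(l)}$ is an inclusion of simplicial sets. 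Because $M^k_{pq}=\emptyset$ unless $\dim U_p=\dim\underline{U}_q$, I expect to handle a simplex $\alpha$ whose vertices have different base dimensions by first replacing its vertex charts with equivalent (stabilized) charts $\mathcal{U}_{v_i(\alpha)}\times\mathbb{R}^{m_i}$ of a common dimension. These are still objects of $\mathcal{K}_{\mathfrak{X}}$, since the objects range over all atlases equivalent to $\widehat{\mathcal{U}}$. This is where the index $l$ genuinely enters: a simplex is first realized in $\mathcal{K}^{(l)}$ for the least $l$ admitting such a common stabilization, and (iii) then propagates it upward.

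\emph{Inner induction on $m$.} In degree $0$, send $p\mapsto\mathcal{U}_p$, stabilized as above. In degree $1$, send $\alpha=(\alpha_0,\alpha_1)$ to the morphism $(U_\alpha,\phi_\alpha,\{\widehat{\phi}_{\alpha,x}\})\in M^0$ given by the coordinate change. This is legitimate because $U_\alpha$ is contractible (Assumption \ref{contrassmp}) and $\widehat\phi^{\mathrm c}_{\alpha,x}$ is a quasi-isomorphism (Definition \ref{ourcemb}). Degenerate simplices are sent to degenerate simplices; this is consistent by Lemma \ref{lemu}(iii), which gives $U_{\sigma_j\alpha}=U_\alpha$, and it yields (ii) automatically. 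Suppose $\mathscr{G}$ has been defined on the $(m-1)$-skeleton, compatibly with faces. For a nondegenerate $\alpha\in N(\widehat{\mathcal{U}})_m$, the restrictions $\mathscr{G}_{m-1}(\partial_j\alpha)|_{U_\alpha}$ make sense by Lemma \ref{lemu}(iv),(v), and they agree on common faces by the simplicial identities. Hence they define a map $\partial\Delta^m\to N(\mathcal{K}_{\mathfrak{X}})$. By adjunction this is a simplicial functor $\mathfrak{C}[\partial\Delta^m]\to\mathcal{K}_{\mathfrak{X}}$, and extending it to $\mathfrak{C}[\Delta^m]$ amounts to filling one cube boundary $\partial(\Delta^1)^{m-1}\to\mathrm{Mor}(\mathcal{U}_{v_0},\mathcal{U}_{v_m})$.

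\emph{Main obstacle.} The hard step is producing this filler. Kan-ness alone fills horns, not whole boundaries, so I will instead use the stronger statement in Corollary \ref{anhp}: any simplex, and hence any cube, whose vertices are quasi-isomorphisms admits a filling $L_\infty[1]$-homotopy. For the base component, contractibility of $U_\alpha$ gives a filling smooth map $U_\alpha\times(\Delta^1)^{m-1}\to U_{v_m}$. This map must also satisfy the constraints (b)(ii)--(iv): preservation of zero loci and surjectivity onto $W'$. I would arrange these by building the filler as a homotopy relative to the zero locus, using the base cocycle condition of Definition \ref{kstr}(iii), so that on $s^{-1}(0)$ the filler can be taken constant. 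Verifying that the $L_\infty$-filler and the base filler are compatible is the delicate point. There the factorization through the completion $\mathcal{C}'_{\phi(x),\phi}$ and the surjectivity requirement (b)(iv) (cf.\ Lemma \ref{surjcp}) will be needed. Finally, (i) holds by construction, since each filler restricts on its boundary to the previously chosen data.
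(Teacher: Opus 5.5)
Your degree-by-degree induction has the same skeleton as the paper's proof: vertices go to charts, edges to coordinate changes, and higher simplices are filled using contractibility of $U_\alpha$ for the base and the quasi-isomorphism results for the $L_\infty[1]$-part. Two steps, however, do not go through as you have written them.

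\textbf{The filler.} The step you single out as the crux is not justified by the result you cite. You correctly recast the inductive step as a \emph{relative} extension problem: extending given data on $\partial(\Delta^1)^{m-1}$ into $\mathrm{Mor}(\mathcal{U}_{v_0},\mathcal{U}_{v_m})$. You also correctly note that Kan-ness does not solve it. But Corollary \ref{anhp} is not stronger in this respect. It only produces \emph{some} $n$-homotopy with prescribed vertices $f_0,\dots,f_n$, with no control over its faces. A Kan complex in which every finite family of vertices spans a simplex can still have nontrivial homotopy groups; the nerve of a nontrivial group is an example. So ``the vertices are quasi-isomorphisms'' does not imply that a given cube boundary bounds.

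The relative statement you need is Proposition \ref{pphhe}: compatible $n$-homotopies on all faces of $\Delta^{n+1}$ extend to an $(n+1)$-homotopy. You would apply it skeleton by skeleton over a triangulation of the cube. You must also check that the model $\mathfrak{C}^{(n+1)}$ it produces can be taken to be $\Omega^*(\Delta^{n+1})\otimes\mathcal{C}_{p,x}$, since condition (c) in the definition of $M^k_{pq}$ requires exactly that.

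The base component has the same defect. Contractibility of the \emph{source} $U_\alpha$ does not let you extend a map given on $U_\alpha\times\partial(\Delta^1)^{m-1}$, because the obstruction lives in the homotopy of the target. You need the relevant maps to land in a contractible region of $U_{v_m}$ while preserving (b)(i) and (b)(iv). Keeping the filler constant on the zero locus via Definition \ref{kstr}(iii) is a good idea, since it makes the sources $\mathcal{C}'_{q,\Phi^0(x)}$ independent of the vertex. It does not, however, touch the obstruction off the zero locus.

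\textbf{The filtration.} Condition (iii) must hold for every $l\geq 1$, so $\mathscr{G}^{(l)}=\mathscr{I}^{(l-1)}\circ\cdots\circ\mathscr{I}^{(1)}\circ\mathscr{G}^{(1)}$. Hence every simplex of $N(\widehat{\mathcal{U}})_\bullet$ must already be sent into $N_\bullet(\mathcal{K}^{(1)}_{\mathfrak{X}})$. Condition (iii) does not merely constrain $\mathscr{G}^{(l)}$ ``whenever the image lies in'' $N_\bullet(\mathcal{K}^{(l)}_{\mathfrak{X}})$. Realizing a simplex first at the least $l$ admitting a common stabilization, and propagating upward, therefore leaves $\mathscr{G}^{(l)}$ undefined on part of $N(\widehat{\mathcal{U}})_\bullet$ for smaller $l$.

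Moreover, a simplicial map assigns one chart to each vertex $p$ once and for all, so you cannot stabilize vertex charts separately for each simplex containing them. A global choice, say stabilizing every chart to $D=\max_p\dim U_p$, forces you to replace each coordinate change by an equidimensional embedding satisfying (b)(i)--(iv), with $\widehat{\Phi}^0$ a quasi-isomorphism. That needs at least tubular neighborhoods and trivialized normal bundles over the contractible $U_\alpha$, and you do not construct it. Without it, your degree-$1$ assignment is not a morphism of $\mathcal{K}_{\mathfrak{X}}$ at all, because $M^k_{pq}=\emptyset$ when $\dim U_p\neq\dim\underline{U}_q$. Also, $\mathcal{K}^{(l)}_{\mathfrak{X}}\neq\mathcal{K}_{\mathfrak{X}}$ for every finite $l$, since stabilized charts of arbitrarily large dimension are objects.

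For comparison, the paper neither stabilizes nor inducts on $l$. It picks minimal-dimensional vertex representatives and runs the degree-wise construction separately for each $l$. Edges are extended constantly in $\Delta^k$, fillers are justified by contractibility together with Theorem \ref{kcpx} and Corollary \ref{anhp}, and (iii) is asserted from the definition of $\mathscr{I}^{(l)}$. Its text is no more explicit on the points above, so you have to supply them yourself rather than cite them.
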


\begin{proof}
Given a Kuranishi space with a hypercovering and a choice of representing atlas, we construct a map $\mathscr{G}^{(l)}_{\bullet} :  N\left(\widehat{\mathcal{U}}\right)_{\bullet} \rightarrow N_{\bullet}\left(\mathcal{K}_\mathfrak{X}^{(l)}\right)$ for each $l,$ inductively on the degree:
\begin{enumerate}
\item For $m=0,$ we assign to each element of $N\left(\widehat{\mathcal{U}}\right)_0$ (that is, to each point $p \in X$) a Kuranishi chart, as follows:
\begin{equation}\nonumber
\mathscr{G}^{(l)}_0(p) :=
\begin{cases}
{\mathcal{U}}_p  &\text{ if } {\mathcal{U}}_p \in \mathrm{Ob}\left(\mathcal{K}_\mathfrak{X}^{(l)}\right),\\
\overline{\mathcal{U}}_p  &\text{ if } {\mathcal{U}}_p \notin \mathrm{Ob}\left(\mathcal{K}_\mathfrak{X}^{(l)}\right),
\end{cases}
\end{equation}
where, in the second case, $\overline{\mathcal{U}}_p$ is taken to be a chart from an atlas $\widehat{\overline{\mathcal{U}}}$ such that (i) $\widehat{\mathcal{U}} \sim \widehat{\overline{\mathcal{U}}}$ and (ii) $\dim\overline{U}_p$ is smallest among all such $\widehat{\overline{\mathcal{U}}}.$
\item Suppose $\mathscr{G}^{(l)}_0(\alpha)$ has been defined for each $\alpha \in N\left(\widehat{\mathcal{U}}\right)_{0}.$  For $m=1$ and $\alpha \in N\left(\widehat{\mathcal{U}}\right)_1,$ we set
\begin{equation}\nonumber
\mathscr{G}^{(l)}_1(\alpha) := \left\{\big(\overline{U}_{\alpha}, {\Phi}^k_{v_0(\alpha)v_1(\alpha)}, \{\widehat{\Phi}^k_{v_0(\alpha)v_1(\alpha),x}\}\big)\right\}_{k \geq 0}
\end{equation}
with
\begin{equation}\label{htp1}
\begin{split}
\overline{U}_{\alpha} &:= \overline{U}_{v_0(\alpha)v_1(\alpha)},\\
\Phi_{v_0(\alpha)v_1(\alpha)}^{k} &: \overline{U}_{\alpha} \times \Delta^{k} \rightarrow \overline{U}_{v_1(\alpha)}, \text{ and }\\ \widehat{\Phi}_{v_0(\alpha)v_1(\alpha),x}^{k} &: \overline{\mathcal{C}}'_{v_1(\alpha), \Phi_{v_0(\alpha),v_1(\alpha)}^0(x)} \rightarrow \Omega^* (\Delta^{k}) \otimes \overline{\mathcal{C}}_{v_0(\alpha), x},\\  &\quad \quad \quad \quad \quad \quad \quad \text{ for each }x \in \overline{s}_{v_0(\alpha)}^{-1}(0) \cap \overline{U}_{\alpha}.
\end{split}
\end{equation}
(See Example \ref{hhex} for the $L_{\infty}[1]$-structure on the target of $\widehat{\Phi}_{v_0(\alpha)v_1(\alpha),x}^{k}$ in (\ref{htp1}).) Each map is defined as follows:
\begin{enumerate}[label = (\roman*)]
\item $\left(\overline{U}_{\alpha}, {\Phi}^0_{v_0(\alpha)v_1(\alpha)}, \left\{\widehat{\Phi}^0_{v_0(\alpha)v_1(\alpha),x}\right\}\right)$ is the coordinate change for the Kuranishi atlas $\overline{\mathcal{U}}.$
\item For $k \geq 1,$ we set
\begin{equation}\nonumber
\begin{split}
\Phi_{v_0(\alpha)v_1(\alpha)}^k (y, \vec{t}) &:= \Phi_{v_0(\alpha)v_1(\alpha)}^0(y),\\ &\text{ for each } y \in \overline{U}_{\alpha} \text{ and }\vec{t} \in \Delta^k,\\
\widehat{\Phi}_{v_0(\alpha)v_1(\alpha),x}^k(\xi) &:= \widehat{\Phi}_{v_0(\alpha)v_1(\alpha),x}^0(\xi),\\ &\text{ for each } x \in \overline{s}^{-1}_{v_0(\alpha)}(0) \cap \overline{U}_{\alpha} \text{ and }\xi \in \overline{\mathcal{C}}'_{v_1(\alpha), \Phi_{v_0(\alpha),v_1(\alpha)}^0(x)}.
\end{split}
\end{equation}
\end{enumerate}
\item Suppose $\mathscr{G}^{(l)}_m$ has been defined for $\alpha \in N\left(\widehat{\mathcal{U}}\right)_{m \leq 1}.$  For $m=2$ and $\alpha = (\alpha_0, \alpha_1, \alpha_2) \in N\left(\widehat{\mathcal{U}}\right)_2,$
 we set \begin{equation}\nonumber
\mathscr{G}^{(l)}_2(\alpha) := \left\{\left(\overline{U}_{\alpha}, \Phi_{\alpha}, \left\{\widehat{\Phi}_{\alpha, x}\right\}\right)\right\}_{k \geq 0}
\end{equation}
with
\begin{equation}\nonumber
\begin{split}
\Phi_{\alpha}^{k} &: \overline{U}_{\alpha} \times \Delta^{k} \rightarrow \overline{U}_{v_2(\alpha)}, \text{ and}\\ \widehat{\Phi}_{\alpha,x}^{k} &: \overline{\mathcal{C}}'_{v_2(\alpha), \Phi_{\alpha}^0(x)} \rightarrow \Omega^* (\Delta^{k}) \otimes \overline{\mathcal{C}}_{v_0(\alpha), x},\\ 
& \quad \quad \quad \quad \quad \quad \quad \quad \quad \quad \quad \quad \text{ for each } x \in \overline{s}_{v_0(\alpha)}^{-1}(0) \cap \overline{U}_{\alpha}.
\end{split}
\end{equation}
Each component is defined as follows:
\begin{enumerate}[label = (\roman*)]
\item We set
\[
\overline{U}_{\alpha} := \overline{U}_{v_0(\alpha)v_2(\alpha)} \cap (\Phi_{v_0(\alpha)v_1(\alpha)}^0)^{-1} (\overline{U}_{v_1(\alpha)v_2(\alpha)}),
\]
which is contractible by Assumption \ref{contrassmp}.
\item $\Phi^0_{\alpha}$ is a homotopy between
\begin{equation}\nonumber
\begin{split}
\Phi^0_{\alpha}|_{t=0} &= \Phi^0_{v_0(\alpha) v_1(\alpha)} \circ \Phi^0_{v_1(\alpha) v_2(\alpha)}|_{U_{\alpha}}, \text{ and}\\
\Phi^0_{\alpha}|_{t=1} &=  \Phi^0_{v_0(\alpha) v_2(\alpha)}|_{U_{\alpha}},
\end{split}
\end{equation}
whose existence is guaranteed by the contractibility of $\overline{U}_{\alpha}$.
\item
$\widehat{\Phi}^0_{\alpha,x}$ is an $L_{\infty}[1]$-homotopy between
\begin{equation}\nonumber
\begin{split}
\Eval_{0} \circ \widehat{\Phi}^0_{\alpha,x} &= \widehat{\Phi}^0_{v_0(\alpha)v_2(\alpha), x}, \text{ and}\\
\Eval_{1} \circ \widehat{\Phi}^0_{\alpha,x} &= \widehat{\Phi}^0_{{v_0(\alpha)v_1(\alpha)}, x} \circ \widehat{\Phi}^0_{{v_1(\alpha)v_2(\alpha)}, \Phi^0_{v_0(\alpha)v_1(\alpha)}(x)},
\end{split}
\end{equation}
at each $x \in \overline{s}_{v_0(\alpha)}^{-1}(0) \cap \overline{U}_{\alpha},$ whose existence is guaranteed by Theorem \ref{kcpx} and Corollary \ref{anhp}. Note that both 
\[
\widehat{\Phi}^0_{v_0(\alpha)v_2(\alpha),x}\text{ and }\widehat{\Phi}^0_{{v_1(\alpha)v_2(\alpha)}, \Phi^0_{v_0(\alpha)v_1(\alpha)}(x)} \circ \widehat{\Phi}^0_{{v_0(\alpha)v_1(\alpha)}, x}
\] 
are quasi-isomorphisms, by the definition of coordinate changes. (See Example \ref{hhex} for the definition of the maps $\Eval_{i}, \ i=0,1.$)
\item For $k \geq 1,$ we set
\begin{equation}\nonumber
\begin{split}
\Phi_{\alpha}^k (y, \vec{t}) &:= \Phi_{v_0(\alpha)v_1(\alpha)}^0 (y),\\ \quad \quad \quad \quad \quad \quad &\text{ for each } y \in \overline{U}_{\alpha} \text{ and } \vec{t} \in \Delta^k,\\
\widehat{\Phi}_{\alpha, x}^k(\xi) &:= \widetilde{\Phi}_{\alpha, x}^0(\xi),\\ \quad \quad \quad \quad \quad \quad &\text{ for each } \xi \in \mathcal{C}'_{v_2(\alpha), \Phi_{v_0(\alpha),v_2(\alpha)}^0(x)} \text{ and } x \in \overline{s}^{-1}_{v_0(\alpha)}(0) \cap \overline{U}_{\alpha}.
\end{split}
\end{equation}
\end{enumerate}
Conditions (i) and (ii) of Definition \ref{defhtpgl} are satisfied by construction.

\item
Suppose $\mathscr{G}^{(l)}_{m}$ has been defined for $m \leq 2.$ We now construct $\mathscr{G}^{(l)}_m$ for $m = 3.$

Let $\alpha \in N\left(\widehat{\mathcal{U}}\right)_2.$ Given the vertices $\mathscr{G}_0^{(l)}\big(v_j(\alpha)\big), \ j =0,1,2,3,$ we may fill in the edges $\mathscr{G}^{(l)}_1\big(v_j(\alpha)v_{j'}(\alpha)\big)$ as in (iii) above, obtaining the following diagram:
\begin{equation}\nonumber
\begin{tikzcd}
{} & \mathscr{G}^{(l)}_0\big(v_0(\alpha)\big) \arrow{rd}{\mathscr{G}^{(l)}_1\big(v_0(\alpha)v_1(\alpha)\big)} \arrow{ld}[swap]{\mathscr{G}^{(l)}_1\big(v_0(\alpha)v_2(\alpha)\big)} \arrow{dd}{\mathscr{G}_1^{(l)}\big(v_0(\alpha)v_3(\alpha)\big)} & {}\\
\mathscr{G}^{(l)}_0\big(v_1(\alpha)\big) \arrow{rd}[swap]{\mathscr{G}^{(l)}_1\big(v_1(\alpha)v_3(\alpha)\big)} & {} & \mathscr{G}^{(l)}_0\big(v_2(\alpha)\big) \arrow{ld}{\mathscr{G}^{(l)}_1\big(v_2(\alpha)v_3(\alpha)\big)}\\
{} & \mathscr{G}^{(l)}_0\big(v_3(\alpha)\big) & {}\\
\end{tikzcd}
\end{equation}
We denote by $\widehat{\Phi}_{v_j(\alpha)v_{j'}(\alpha), x}$

To fill this diagram with homotopies, we choose
\begin{equation}\nonumber
\mathscr{G}^{(l)}_3(\alpha) := \left\{\left(\overline{U}_{\alpha}, \Phi_{\alpha}, \left\{\widehat{\Phi}_{\alpha, x}\right\}\right)\right\}_{k \geq 0}
\end{equation}
with
\begin{equation}\nonumber
\begin{split}
\Phi_{\alpha}^{k} &: \overline{U}_{\alpha} \times \Delta^{k} \rightarrow \overline{U}_{v_3(\alpha)},\\ \widehat{\Phi}_{\alpha,x}^{k} &: \overline{\mathcal{C}}'_{v_3(\alpha), \Phi_{\alpha}^0(x)} \rightarrow \Omega^* (\Delta^{k}) \otimes \overline{\mathcal{C}}_{v_0(\alpha), x}, \text{ for each } x \in \overline{s}_{v_0(\alpha)}^{-1}(0) \cap \overline{U}_{\alpha}.
\end{split}
\end{equation}
Each component is defined as follows:
\begin{enumerate}[label = (\roman*)]
\item We set
\begin{equation}\nonumber
\begin{split}
\overline{U}_{\alpha} := \overline{U}_{v_0(\alpha)v_3(\alpha)} &\cap (\Phi_{v_0(\alpha)v_1(\alpha)}^0)^{-1} (\overline{U}_{v_1(\alpha)v_3(\alpha)})\\ &\cap (\Phi_{v_0(\alpha)v_2(\alpha)}^0)^{-1} (\overline{U}_{v_2(\alpha)v_3(\alpha)}),
\end{split}
\end{equation}
which is contractible by Assumption \ref{contrassmp}. Note that $\overline{U}_{\alpha}$ is always nonempty.
\item $\Phi^0_{\alpha}$ is a homotopy characterized by
\begin{equation}\nonumber
\Phi^0_{\alpha}|_{\overline{U}_{\alpha} \times d_i (\Delta^{2})} = \mathscr{G}^{(l)}(\partial_i \alpha), \  \ i=0,1,2,\\
\end{equation}
whose existence is guaranteed by the contractibility of $\overline{U}_{\alpha}$.
\item
$\widehat{\Phi}^0_{\alpha,x}$ is an $L_{\infty}[1]$-homotopy characterized by
\begin{equation}\nonumber
\Eval_{J} \circ \widehat{\Phi}^0_{\alpha, x} =
\begin{cases}
\widehat{\Phi}^0_{v_0(\alpha)v_1(\alpha), x} \circ \widehat{\Phi}^0_{v_1(\alpha)v_3(\alpha), {\Phi}_{v_0(\alpha)v_1(\alpha)}(x)} \ &\text{ if } J =\{0,1,3\},\\
\widehat{\Phi}^0_{v_0(\alpha)v_2(\alpha), x} \circ \widehat{\Phi}^0_{v_2(\alpha)v_3(\alpha), {\Phi}_{v_0(\alpha)v_2(\alpha)}(x)} \ &\text{ if } J =\{0,2,3\},\\
\widehat{\Phi}^0_{v_0(\alpha)v_3(\alpha), x} \ &\text{ if } J = \{0,1,2,3\},
\end{cases}
\end{equation}
at each $x \in \overline{s}_{v_0(\alpha)}^{-1}(0) \cap \overline{U}_{\alpha}.$ (See Example \ref{hhex} for the definition of the map $\Eval_{J}$ for $J \subset \{0,1,2,3\}.$) Its existence is guaranteed by Theorem \ref{kcpx} and Corollary \ref{anhp}.
\item For $k \geq 1,$ we set
\begin{equation}\nonumber
\begin{split}
\Phi_{\alpha}^k (y, \vec{t}) &:= \Phi_{v_0(\alpha)v_1(\alpha)}^0 (y),\\ & \quad  \quad \text{ for each } y \in \overline{U}_{\alpha}, \ \vec{t} \in \Delta^k, \text{ and}\\
\widehat{\Phi}_{\alpha, x}^k(\xi) &:= \widehat{\Phi}_{\alpha, x}^0(\xi),\\ &\quad \quad \text{ for each } \xi \in \overline{\mathcal{C}}'_{v_3(\alpha), \Phi_{v_0(\alpha),v_3(\alpha)}^0(x)} \text{ and } x \in \overline{s}_{v_0(\alpha)}^{-1}(0) \cap \overline{U}_{\alpha}.
\end{split}
\end{equation}
\end{enumerate}
Conditions (i) and (ii) of Definition \ref{defhtpgl} are satisfied by construction.
\end{enumerate}
The same construction clearly applies for all $m \geq 4,$ yielding the desired higher cocycle conditions. Conditions (i) and (ii) of Definition \ref{defhtpgl} are then satisfied by construction, and condition (iii) is immediate from the definition of the functor $\mathscr{I}^{(l)}.$
\end{proof}

\appendix

\section{$L_{\infty}[1]$-algebras and their higher homotopy theory}

In this section, we briefly introduce $L_{\infty}[1]$-algebras and their higher homotopy theory, following \cite{Kim2}. We first recall the notion of \textit{graded symmetric algebra} $S C$ of a vector space $C$ over a field $\mathbf{k},$
$$
S C := TC/ \langle v \otimes v' - (-1)^{|v|\cdot|v'|} v' \otimes v \rangle,
$$
with its degree $k$ component $S^k C := \{v \in S C \mid v \text{ is homogeneous of degree } k\}.$
We have a decomposition
$$
S C = \bigoplus\limits_{k=0}^{\infty} S^k C
$$
with the induced product $\odot$ on each component.
We denote by $\text{Sh}(i, k-i)$ the set of $(i, k-i)$-unshuffles, and the sign $\sgn(\tau)$ for $\tau \in \text{Sh}(i, k-i)$ is defined for homogeneous elements $a_1, \cdots, a_k \in C$, we write
$$
a_{\tau(1)} \odot \cdots \odot a_{\tau(k)} = \sgn(\tau) a_1 \odot \cdots \odot a_k.
$$

\begin{defn}
An \textit{$L_{\infty}[1]$-algebra} is a pair $\left(C, \{l_k\}\right)$ consisting of a vector space $C$ and a family of degree 1 linear maps
$$
l_k : S^k C \rightarrow C, \ k \geq 0,
$$
satisfying the relations
\begin{equation}\label{quadrel}
\sum\limits_{i = 0}^{k} \sum\limits_{\tau \in {\text{Sh}}(i, k-i)} {\sgn(\tau)} l_{k-i+1}\left(l_i(a_{\tau(1)}, \cdots, a_{\tau(i)}),a_{\tau(i+1)}, \cdots, a_{\tau(k)}\right) = 0.
\end{equation}
\end{defn}

\begin{defn}
Let $(C,\{l_k\})$ and $(C', \{l'_k\})$ be two $L_{\infty}[1]$-algebras. An $L_{\infty}[1]$\textit{-algebra morphism}, or simply $L_{\infty}[1]$\textit{-morphism}
\begin{equation}\label{lrel}
f : C \rightarrow C'
\end{equation}
is a family of degree 0 linear maps
$$
f_k : S^kC \rightarrow C', \ k \geq 0,
$$
satisfying the relations
\begin{equation}\label{frel}
\begin{split}
\sum\limits_{i = 0}^{k}& \sum\limits_{\tau \in {\text{Sh}}(i, k-i)} {\sgn(\tau)} f_{k-i+1}\left(l_i(a_{\tau(1)}, \cdots, a_{\tau(i)}),a_{\tau(i+1)}, \cdots, a_{\tau(k)}\right)\\
&= \sum\limits_{\substack{t, j_1, \cdots, j_t \geq 1,\\ j_1 + \cdots + j_t = k}} \sum\limits_{\tau \in S_k}  \frac{\sgn(\tau)}{t! j_1! \cdots j_t!} \  l'_{t}\bigl(f_{j_1}(a_{\tau(1)}, \cdots, a_{\tau(j_1)}), \cdots, \\
& \quad \quad \quad \quad \quad \quad \quad \quad \quad \quad \quad \quad f_{j_t}(a_{\tau(k -(j_1 + \cdots + j_{t-1}))}, \cdots, a_{\tau(k)})\bigr).
\end{split}
\end{equation}
Here, $S_k$ denotes the symmetric group of permutations of $k$ elements. 
\end{defn}

\begin{defn}
For two $L_{\infty}[1]$-morphisms
$$
f : C \rightarrow C', \ g: C' \rightarrow C'',
$$
we define their \textit{composition}
$$
g \circ f : C \rightarrow C''
$$
by a family of linear maps of degree 0 for $k \geq 0$
\begin{equation}\nonumber
\begin{split}
(g \circ f)_k := &\sum\limits_{i = 0}^{k} \sum\limits_{\tau \in S_k}  \frac{\sgn(\tau)}{t! j_1! \cdots j_t!} \ g_{t}\bigl(f_{j_1}(a_{\tau(1)}, \cdots, a_{\tau(j_1)}), \cdots,\\
&\quad \quad\quad \quad\quad \quad\quad \quad f_{j_t}(a_{\tau(k -(j_1 + \cdots + j_{t-1}))}, \cdots, a_{\tau(k)})\bigr).
\end{split}
\end{equation}
It is straightforward to verify that $\{(g \circ f)_k\}_{k \geq 0}$ satisfies the relation (\ref{frel}).
\end{defn}

\begin{defn}
We say an $L_{\infty}[1]$-algebra $\{l_k\}_{k \geq 0}$ is \textit{strict} if $l_0 = 0.$ Otherwise, we say it is \textit{curved.} We similarly define \textit{strict/curved} $L_{\infty}[1]$-morphisms.
\end{defn}

In the strict case, the relations (\ref{lrel}) and (\ref{frel}) coincide with the differential and the chain map relations, respectively. That is, they satisfy
$$
\begin{cases}
l_1 \left( l_1(a) \right) = 0,\\
l'_1 \left(f_1 (a) \right) = f_1 \left( l_1(a) \right).
\end{cases}
$$
\begin{defn}
We say that a strict $L_{\infty}[1]$-algebra $(C,\{l_k\})$ is \textit{acyclic} if its cohomology for each degree vanishes, that is, if 
\[
H^*(C) = \frac{\ker{l_1}}{\mathrm{Im}l_1} = 0.
\]
We say that a strict $L_{\infty}[1]$-morphism $\{f_k\}_{k \geq 1}$ between strict $L_{\infty}[1]$-algebras is a \textit{quasi-isomorphism} if $f_1$ is a quasi-isomorphic chain map.
\end{defn}
From now on and elsewhere in this paper, we always assume the strictness of $L_{\infty}[1]$-algebras.

As a preparation for the definition of higher homotopies of $L_{\infty}[1]$-algebras, we need to consider the following notion: 
\begin{defn}\cite[Definition 4.1]{Kim2}\label{hhtp}
Let $C$ be an $L_{\infty}[1]$-algebra. Suppose that we have defined models of $\Delta^k \times C$ with $k \leq n-1.$ We recursively define \textit{models of} $\Delta^n \times C$ with $n \geq 2$ to be a collection of $L_{\infty}[1]$-algebras
\begin{equation}\nonumber
\mathfrak{C}^{(n)}, \  \left(\mathfrak{C}^{(n)}\right)_{J} \equiv \mathfrak{C}_{J}^{(n-1)}, \text{ where } J \text{ is a subset of } \{0, \cdots, n\} \text{ such that } |J| = n,
\end{equation}
together with an $L_{\infty}[1]$-morphism
\[
\Eval_J^{(n)} :  \mathfrak{C}^{(n)} \rightarrow \mathfrak{C}_{J}^{(n-1)},
\]
and a chain map
\[
\Incl^{(n)} : {C} \rightarrow \mathfrak{C}^{(n)}\\
\]
with the following properties:
\begin{enumerate}[label = (\roman*)]
\item $\mathfrak{C}_{J}^{(n-1)}$ is a model of $\Delta^{n-1} \times C$ with $\mathfrak{C}_{\{i\}}^{(0)} = C$ for each $i.$
\item $\left(\mathfrak{C}_{J}^{(n-1)} \right)_{J'} = \left(\mathfrak{C}_{J'}^{(n-1)} \right)_{J} = \mathfrak{C}_{J \cap J'}^{(n-2)}$ for all $J, J' \subset \{0, \cdots, n\}$ with $|J| = |J'| = n$ and $|J \cap J'| = n-1.$ 
\item $\left(\Eval_j\right)_{k \geq 2} \equiv 0, \ j =0,1, \ \Incl_{k \geq 2} \equiv 0.$
\item $\Eval_J^{(n)}$ and $\Incl^{(n)}$ are quasi-isomorphisms.
\item $\Eval_J^{(n)} \circ \Incl^{(n)} = \Incl_J^{(n-1)},$
where $ \Incl_J^{(n-1)}$ is the Incl map for $\mathfrak{C}_{J}^{(n-1)},$ the model of $\Delta^{n-1} \times C$ for the index $J.$
\item The following sequence of chain complexes
\[
\begin{split}
\mathfrak{C}^{(n)} \xrightarrow{\partial_n}  \bigoplus_{\substack{J \subset \{0, \cdots, n\}, \\ |J| = n}} &\mathfrak{C}_J^{(n-1)} \xrightarrow{\partial_{n-1}} \bigoplus_{\substack{J' \subset \{0, \cdots, n\}, \\ |J'|=n-1}} \mathfrak{C}_{J'}^{(n-2)} \xrightarrow{\partial_{n-2}} \\
&\cdots \xrightarrow{\partial_{2}} \bigoplus_{\substack{J'' \subset \{0, \cdots, n\}, \\ |J''|=2}} \mathfrak{C}_{J''}^{(1)} \xrightarrow{\partial_{1}} \bigoplus_{i \in \{0, \cdots, n\}}C \rightarrow 0
\end{split}
\]
is in fact a chain complex that is exact at the first term.
Here, the differentials $\partial_n$ and $\partial_{n-k}, \ 1 \leq k \leq n-1$ are given by
\end{enumerate}
\end{defn}

Using models of $\Delta^n \times C$, we can define higher homotopies:
\begin{defn}\cite[Definition 4.2]{Kim2}
Let $C$ and $C'$ be $L_{\infty}[1]$-algebras and $f_0, \cdots, f_n : C \rightarrow C',$ $L_{\infty}[1]$-morphisms for $n \geq 1.$ Consider a sequence $\vec{J}$ of subsets
\begin{equation}\nonumber
\vec{J} : J_0 \subsetneq J_1 \subsetneq \cdots \subsetneq J_{n-1} \subsetneq \{0, \cdots, n\}
\end{equation}
with $|J_l| = l+1, \ 0 \leq l \leq n-1.$
We say $f_0, \cdots, f_n : C \rightarrow C'$ are $n$-\textit{homotopic} if there exist a model of $\Delta^n \times C',$ say $\mathfrak{C}'^{(n)},$ and an $L_{\infty}[1]$-morphism $h : C \rightarrow \mathfrak{C}'^{(n)}$ such that
\begin{equation}\nonumber
\Eval^{(1)}_{J_0} \circ \cdots \circ \Eval^{(n)}_{J_{n-1}} \circ h = f_j
\end{equation}
for each sequence $\vec{J}$ with $J_0 = \{j\}.$
We call such a map $h$ an \textit{n-homotopy} ($n \geq 1$) of $f_0, \cdots, f_n.$ 
\end{defn}

\begin{exam}\label{hhex}
Let $\Delta^n$ be the standard $n$-simplex and $\Omega^*(\Delta^n)$ its de Rham complex over a field. We denote
\begin{equation}\nonumber
\begin{split}
\mathfrak{C}^{(n)} &:= \Omega^*(\Delta^n) \otimes C,\\
\mathfrak{C}^{(n-1)}_{J_i} &:= \Omega^*(\partial_i \Delta^n) \otimes C, \text{ where } \ J_i = \{0, \cdots, n\} \setminus \{i\} \text{ for } 0 \leq i \leq n.\
\end{split}
\end{equation}
On $\mathfrak{C}^{(n)},$ there exists an $L_{\infty}[1]$-algebra structure
\[
l_k : (\mathfrak{C}^{(n)})^{\otimes k} \rightarrow \mathfrak{C}^{(n)}, \ k \geq 1,
\]
which is given by
\begin{equation}\nonumber
l_k(\alpha_1 \otimes x_1, \cdots, \alpha_k \otimes x_k) := 
\begin{cases}
d \alpha_1 \otimes x_1 + (-1)^{|\alpha_1|} \alpha_1 \otimes l_1(x_1) &\text{ if } k =1,\\
 (-1)^{|\vec{\alpha}|} \alpha_1 \wedge \cdots \wedge \alpha_k \otimes l_k(x_1, \cdots, x_k) &\text{ if } k \geq 2,
\end{cases}
\end{equation}
for each $\alpha_i \in \Omega^*(\Delta^n), \ x_i \in C, \ i = 1, \cdots, k,$ and $k \geq 1.$ Here, we denote
\begin{equation}\nonumber
|\vec{\alpha}| := \sum\limits_{i = 1}^{k-1}|x_i| \cdot (|\alpha_{i+1}|+ \cdots + |\alpha_k|) + \sum\limits_{i = 1}^k |\alpha_i|.
\end{equation}

The $L_{\infty}[1]$-morphism $\Eval^{(n)}_{J_i} = \left\{\left(\Eval^{(n)}_{J_i}\right)_k\right\}_{k \geq 1}$ and  and the chain map $\Incl^{(n)}$ are given by
\begin{equation}\nonumber
\left(\Eval_{J_i}^{(n-1)}\right)_k : \left(\Omega^*(\Delta^n) \otimes C\right)^{\otimes k} \rightarrow \Omega^*(\partial_i \Delta^n) \otimes C,
\end{equation}
\begin{equation}\nonumber
\left(\Eval_{J_i}^{(n-1)}\right)_k :=  
\begin{cases}
\text{(the restriction to $i$-th face)} \otimes \textrm{id}_C & \text{ if } k = 1,\\
 0 &\text{ if } k \geq 2,
\end{cases}
\end{equation}
and
\begin{equation}\nonumber
\begin{split}
\Incl^{(n)} &: C \rightarrow \Omega^*(\Delta^n) \otimes C,\\
\Incl^{(n)} &:= 1 \otimes \textrm{id}_C,
\end{split}
\end{equation}
respectively. It immediately follows that all the conditions in Definition \ref{hhtp} are satisfied. In particular, one can easily see that $\Eval^{(n)}_{J_i}$ and $\Incl^{(n)}$ are $L_{\infty}[1]$-algebra morphisms. Moreover, they are quasi-isomorphisms, whose proof can be sketched as follows. Since $\Incl^{(n)}$ is injective, its quasi-isomorphicity is equivalent to the acyclicity of the quotient complex 
\[
\frac{\Omega^*(\Delta^n) \otimes C}{\Incl^{(n)}(C)} \simeq \frac{\Omega^*(\Delta^n) \otimes C}{\{1\} \otimes C} \simeq \frac{\Omega^*(\Delta^n)}{\{\text{const. ftns.}\}} \otimes C,
\]
which follows from the acyclicity of $\frac{\Omega^*(\Delta^n)}{\{\text{const. ftns.}\}}$ and the K\"{u}nneth formula. Finally, the axiom (iv) of Definition \ref{hhtp} with an inductive argument implies that $\Eval^{(n)}_{J_i}$ is also a quasi-isomorphism.
\end{exam}

We now state a key proposition in this section:
\begin{prop}\cite[Proposition 4.5]{Kim2}\label{pphhe}
Let $f_0, \cdots, f_{n+1} : C_0 \rightarrow C \ (n \geq 0)$ be quasi-isomorphic $L_{\infty}[1]$-morphisms. Suppose that we are given an $n$-homotopy $h_J : C_0 \rightarrow \mathfrak{C}^{(n)}_J$ of $f_{j_0}, \cdots, f_{j_n}$
 for each given $J = \{ j_0 < \cdots < j_n\} \subset \{0, \cdots, n+1\},$ satisfying $\Eval^{(n)}_{J \cap J'} \circ h_J = \Eval^{(n)}_{J \cap J'} \circ h_{J'}$ for two distinct $J$ and $J'.$ Then there exist  a model $\mathfrak{C}^{(n+1)}$ of $\Delta^{n+1} \times C$ and an $(n+1)$-homotopy $\overline{h} : C_0 \rightarrow \mathfrak{C}^{(n+1)}$ of $f_0, \cdots, f_{n+1}$ such that ${\mathfrak{C}^{(n)}_J}$'s belong to the data for $\mathfrak{C}^{(n+1)},$ satisfying $\mathrm{Eval}^{(n+1)}_J \circ \overline{h} = h_J.$
\end{prop}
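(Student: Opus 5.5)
The plan is to prove Proposition \ref{pphhe} by induction on $n$. The main tool is the model of Example \ref{hhex}, $\mathfrak{C}^{(n+1)} := \Omega^*(\Delta^{n+1}) \otimes C$, together with a lifting property for quasi-isomorphic $L_\infty[1]$-morphisms. The construction has two steps. First, glue the given homotopies $h_J$ into a single $L_\infty[1]$-morphism
\[
h_\partial : C_0 \rightarrow \Omega^*(\partial\Delta^{n+1}) \otimes C,
\]
using the compatibility $\Eval_{J\cap J'}\circ h_J = \Eval_{J\cap J'}\circ h_{J'}$. Here $\Omega^*(\partial\Delta^{n+1})$ means compatible families of forms on the faces, and the gluing is exactly the exactness at the first terms of the sequence in Definition \ref{hhtp} (vi). Second, extend $h_\partial$ across the interior to $\overline{h} : C_0 \rightarrow \Omega^*(\Delta^{n+1})\otimes C$ satisfying $\Eval^{(n+1)}_J\circ\overline{h}=h_J$. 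The vertex conditions for $f_0,\dots,f_{n+1}$ then follow from those of the $h_J$ by composing the $\Eval$ maps.

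For the extension, I would use the restriction map $r : \Omega^*(\Delta^{n+1})\otimes C \rightarrow \Omega^*(\partial\Delta^{n+1})\otimes C$. It is a strict $L_\infty[1]$-morphism, surjective because of Whitney extension of forms on the simplex, so it behaves like a fibration. Its kernel is $\Omega^*(\Delta^{n+1},\partial\Delta^{n+1})\otimes C$, whose cohomology is $H^*(C)$ shifted by $n+1$, by relative de Rham cohomology and the K\"unneth formula as in Example \ref{hhex}. A lift of $h_\partial$ through $r$ exists if and only if an obstruction class vanishes. The plan is to build the lift arity by arity, $\overline{h}_k$ for $k=1,2,\dots$. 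At each arity the defect is a cocycle in $\mathrm{Hom}(S^kC_0, \ker r)$. Its class is measured by the boundary "sphere" $\partial\Delta^{n+1}\rightarrow$ (mapping space from $C_0$ to $C$), which is the image of $h_\partial$.

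The key step, and the main obstacle, is showing that this obstruction vanishes. I would first try homotopy invariance. Since every $f_j$ is a quasi-isomorphism, the boundary datum can be compared with the constant datum $\Incl\circ f_0$. Using the $L_\infty[1]$ Whitehead theorem (invoked in \cite{Kim1}), I would pick a homotopy inverse $g$ of $f_0$ and precompose. This reduces the problem to maps $C \rightarrow C$ homotopic to the identity. For those, the relevant homotopy groups of the mapping space are computed through $H^*(C)$, and the boundary class can be contracted explicitly using the contracting homotopy of $\Omega^*(\Delta^{n+1})$ relative to a vertex, namely the Poincar\'e lemma / Dupont-type homotopy $\int$ along the cone coordinate. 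Concretely, I would write $\Delta^{n+1}$ as the cone on the face $J_0=\{1,\dots,n+1\}$ with apex $0$. I would define $\overline{h}$ by pulling $h_{J_0}$ back along the cone projection, then correct the remaining faces with Dupont's homotopy operator. Quasi-isomorphy is needed to ensure the correction terms can be chosen to be $L_\infty[1]$-morphisms and not merely linear maps.

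If the explicit cone construction fails to respect the $L_\infty[1]$ relations at higher arity, the fallback is the general obstruction-theoretic argument above. There, quasi-isomorphy of the $f_j$ is used to identify the obstruction group with one in which the boundary class is forced to be trivial. The final step is routine: verify the conditions of Definition \ref{hhtp} for the chosen model, which is already done in Example \ref{hhex}, and check that the $\mathfrak{C}^{(n)}_J$ are its faces.
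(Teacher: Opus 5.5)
The paper gives no proof of this statement; it only cites \cite[Proposition 4.5]{Kim2}. Your proposal therefore has to stand on its own, and it breaks exactly at the step you flagged: the $f_j$ being quasi-isomorphisms does nothing to make the lifting obstruction vanish.

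You computed yourself that $\ker r=\Omega^*(\Delta^{n+1},\partial\Delta^{n+1})\otimes C$ has cohomology $H^*(C)$ shifted by $n+1$. This is nonzero whenever $H^*(C)\neq 0$, and the obstruction is the class of $h_\partial$ in $\pi_n$ of the simplicial mapping space at $f_0$. Precomposing with a homotopy inverse of $f_0$ only moves the basepoint to $\mathrm{id}_C$; for $n=0$ it even presupposes $f_1\simeq f_0$, which is the claim. It does not make that group zero, and no contracting homotopy of $\Omega^*(\Delta^{n+1})$ can contract a nonzero class.

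In fact the conclusion is false in the de Rham setting you chose.
For $n=0$, let $C_0=C=\mathbb{R}$ in degree $0$ with all $l_k=0$, $f_0=\mathrm{id}$ and $f_1=2\,\mathrm{id}$. The linear part of a $1$-homotopy is a chain map, so $\overline{h}_1(1)=g\otimes 1$ with $g\in\Omega^0(\Delta^1)$ and $dg=0$. Hence $g$ is constant, contradicting $g(0)=1$, $g(1)=2$.

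For $n\geq 1$, let $C_0=C=\mathbb{R}a\oplus\mathbb{R}b$ with $|a|=0$, $|b|=-n$ and all $l_k=0$, and let every $f_j=\mathrm{id}$. Choose a closed $n$-form $\omega$ on $\partial\Delta^{n+1}$ with $\int_{\partial\Delta^{n+1}}\omega\neq 0$, for instance a bump form supported in the interior of one face. Set $h_J(a)=1\otimes a+\omega|_{J}\otimes b$ and $h_J(b)=1\otimes b$. These are compatible $n$-homotopies of the identities. But the degree-$0$ part of $\Omega^*(\Delta^{n+1})\otimes C$ is $\Omega^0\otimes a\oplus\Omega^n\otimes b$, so any filler has $\overline{h}_1(a)=1\otimes a+\widetilde{\omega}\otimes b$ with $\widetilde{\omega}$ a closed $n$-form on $\Delta^{n+1}$ restricting to $\omega$. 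That contradicts Stokes' theorem.

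There is a second issue. The statement lets the $h_J$ land in arbitrary models $\mathfrak{C}^{(n)}_J$ and asks you to build $\mathfrak{C}^{(n+1)}$ with those faces. Fixing $\Omega^*(\Delta^{n+1})\otimes C$ covers only the models of Example \ref{hhex}. Axiom (vi) also cannot be used to glue the $h_J$, since it is a property of the model you have not yet constructed.

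This extra freedom does not rescue the claim, however. Suppose $\Incl$ is a quasi-isomorphism and $\Eval_J\circ\Incl=\Incl_J$. Then $H^*$ of $(\Eval_J)_J$ is determined by $H^*(\Incl)^{-1}$. Hence $H^*(h_\partial)$ would have to factor through the constant inclusion $H^*(C)\to H^*(\partial\Delta^{n+1})\otimes H^*(C)$, and the examples above violate this. For $n=0$ this is just the fact that $1$-homotopic maps induce the same map on cohomology.

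What your lifting scheme does prove, essentially verbatim, is the statement under a hypothesis that kills the obstruction group, e.g.\ $H^*(C)=0$. Then each $\mathrm{Hom}(S^kC_0,\ker r)$ is acyclic, every arity-$k$ defect is a coboundary, and quasi-isomorphy of the $f_j$ plays no role. In general, a filler exists if and only if the class of $h_\partial$ in $\pi_n$ is trivial, and quasi-isomorphy of the vertices is no substitute for that condition.
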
 

We show that a key theorem on quasi-isoms, which shows the usefulness of our definition
\begin{cor}\cite[Corollary 4.6]{Kim2}\label{anhp}
Arbitrarily given quasi-isomorphic $L_{\infty}[1]$-morphisms $f_0, \cdots, f_n : C \rightarrow C' \ (n \geq 1)$ are $n$-homotopic.
\end{cor}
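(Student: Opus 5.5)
The statement to prove is Corollary \ref{anhp}: arbitrary quasi-isomorphic $L_\infty[1]$-morphisms $f_0,\dots,f_n : C\to C'$ are $n$-homotopic. I would argue by induction on $n$, using Proposition \ref{pphhe} as the inductive step. The idea is to build the required $n$-homotopy skeleton by skeleton over the simplex $\Delta^n$, with vertices labelled by $f_0,\dots,f_n$. On each face one uses the lower-dimensional homotopies already constructed, and Proposition \ref{pphhe} fills in the top cell.

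\textbf{Base case $n=1$.} Apply Proposition \ref{pphhe} with $n=0$ to the pair $f_0,f_1$. A $0$-homotopy $h_{\{j\}} : C\to \mathfrak{C}^{(0)}_{\{j\}}=C'$ of $f_j$ alone is just $f_j$ itself. The compatibility hypothesis $\Eval_{J\cap J'}\circ h_J=\Eval_{J\cap J'}\circ h_{J'}$ is vacuous here, since $J\cap J'=\emptyset$ for distinct singletons; at most there is an empty-set convention to check. The proposition then produces a model $\mathfrak{C}'^{(1)}$ and a $1$-homotopy $\overline h$ with $\Eval_{\{j\}}\circ\overline h=f_j$. This is exactly the statement that $f_0$ and $f_1$ are $1$-homotopic.

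\textbf{Inductive step.} Suppose the claim holds for every family of at most $n$ quasi-isomorphisms. Given $f_0,\dots,f_n$, I would construct compatible $k$-homotopies $h_J$ for all $J\subset\{0,\dots,n\}$ with $|J|=k+1$, by induction on $k$ from $0$ up to $n-1$.
\begin{enumerate}[label=(\alph*)]
\item For $k=0$, set $h_{\{j\}}=f_j$.
\item For each $J$ with $|J|=k+1\ge 2$, the homotopies $h_{J'}$ for the codimension-one faces $J'\subset J$ have already been built. They agree on common faces: this agreement holds at stage $k=0$ trivially, and it is preserved at each later stage because Proposition \ref{pphhe} yields $\Eval_{J'}\circ h_J = h_{J'}$.
\item Apply Proposition \ref{pphhe}, relabelled to the index set $J$, to obtain $h_J$ together with a model of $\Delta^{k}\times C'$ that contains the face models already chosen.
\end{enumerate}
Once all faces of dimension $n-1$ are done, one final application of Proposition \ref{pphhe} to $J=\{0,\dots,n\}$ gives $\overline h : C\to\mathfrak{C}'^{(n)}$ with $\Eval_J\circ\overline h=h_J$.

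\textbf{Verifying the homotopy condition.} Composing the $\Eval$ maps along any flag $\vec J$ with $J_0=\{j\}$ telescopes:
\[
\Eval_{J_0}\circ\cdots\circ\Eval_{J_{n-1}}\circ\overline h=h_{J_0}=f_j .
\]
This is precisely the defining condition for an $n$-homotopy.

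\textbf{Expected main obstacle.} The delicate point is coherence of the models. The $(k-1)$-dimensional face models chosen separately for different $J$ must be literally the same objects, so that condition (ii) of Definition \ref{hhtp} holds and Proposition \ref{pphhe} can be applied with a single set of face data. I would handle this in one of two ways:
\begin{itemize}
\item fix the models once and for all using Example \ref{hhex}, namely $\Omega^*(\Delta^J)\otimes C'$, whose faces are canonically identified by restriction of forms; or
\item record explicitly that Proposition \ref{pphhe} extends the given face models rather than replacing them, which its statement does assert.
\end{itemize}
With this bookkeeping in place, the induction goes through.
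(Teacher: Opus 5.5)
This is the intended argument. The paper gives no proof here beyond citing \cite[Corollary 4.6]{Kim2}, but, as its placement right after Proposition \ref{pphhe} indicates, the corollary is exactly that proposition applied skeleton by skeleton over $\Delta^n$, with face compatibility propagated through $\Eval_{J'}\circ h_J=h_{J'}$ and the flag condition obtained by telescoping, just as you do.

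Only your second way of handling coherence is legitimate, though. Proposition \ref{pphhe} produces \emph{some} model extending the given face models; it does not produce a filler inside the fixed model $\Omega^*(\Delta^J)\otimes C'$ of Example \ref{hhex}. With that model fixed, the claim already fails for $n=1$. Take $C=C'$ one-dimensional in degree $0$ over $\mathbb{R}$ with all $l_k=0$, and $f_0=\mathrm{id}$, $f_1=-\mathrm{id}$. For any $L_\infty[1]$-morphism $h:C\to\Omega^*(\Delta^1)\otimes C'$, the component $h_1$ sends the generator to a closed, hence constant, $0$-form. So both endpoint evaluations of $h$ coincide, and $h$ cannot connect $f_0$ to $f_1$.
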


\section{Presymplectic Neighborhoods and Local $L_{\infty}[1]$-Algebras}

In this section, we provide the details on the structures that we put on a Kuranishi chart. For more detail, we refer the reader to \cite{Kim1}.

Let a Kuranishi chart be given with the base $U.$ We equip $U$ with a closed two-form $\beta \in \Omega^2(U).$ 

We require that $U$ has a decomposition 
\begin{equation}\label{aabbbb}
U = \bigcup\limits_i \mathcal{S}_i,
\end{equation}
into (possibly non-connected) submanifolds,
\[
\mathcal{S}_i := \{x \in U \mid \text{rk} (\ker\beta_x) =i \}, \ 0 \leq i \leq \dim U
\]
together with their tubular neighborhoods:
\[
\begin{cases}
\iota_{i} : N_i \rightarrow  U, \text{ an open neighborhood of each }  \mathcal{S}_{i} \text{ in }U,\\
\pi_{i} : N_i \rightarrow \mathcal{S}_{i}, \text{ the associated projection}.
\end{cases}
\]

\begin{rem}
\cite[Corollary 6.2]{KO} proves that there exists a residual subset of such closed two-forms. Furthermore, the stratification structure turns out to be Whitney (cf. \cite[Definition 6.5 \& Theorem 6.6]{KO}), which leads to the construction of \textit{stratified $L_{\infty}$-spaces.}
\end{rem}

\subsubsection*{The presymplectic open neighborhood $W_x$} We associate an open contractible submanifold $W_x$ to each zero point $x \in s^{-1}(0).$ 

For each zero point $x \in s^{-1}(0) \cap \mathcal{S}_i$ for some $i$, we choose $\overset{\circ}{W}_x \subset \mathcal{S}_i,$ an open ball containing $x$ in $\mathcal{S}_i.$ For the inclusion ${\iota_x} : \overset{\circ}{W}_x {\hookrightarrow} U,$ we denote $\beta|_{\overset{\circ}{W}_x} := \iota_x^*\beta.$ We have $d\left(\beta|_{\overset{\circ}{W}_x}\right) = d\iota_x^*\beta = \iota_x^* d\beta = 0$ and that $\beta|_{\overset{\circ}{W}_x}$ is of constant rank by construction. In other words, $\left(\overset{\circ}{W_x}, \beta|_{\overset{\circ}{W_x}}\right)$ is a presymplectic manifold.

Then we obtain another presymplectic manifold
\begin{equation}\label{adslfjkdh}
W_x = (W_x, \beta_{W_x}) := 
\left(\pi_i^{-1}(\overset{\circ}{W_x}), \pi^*_i(\beta|_{\overset{\circ}{W_x}})\right)
\end{equation}
and call it a \textit{local presymplectic neighborhood of} $x \in s^{-1}(0).$ 

We write 
\[
T\mathcal{F}_x := \ker \beta_{W_x}
\]
for the regular foliation (i.e., each leaf having the same dimension) determined by the kernel of $\beta_{W_x}.$ In \cite[Lemma B.2]{Kim1}, we equip $\Omega^{\bullet +1}(\mathcal{F}_x)$ with an $L_{\infty}[1]$-algebra structure $\{l^{\mathcal{F}}_k\}.$ Furthermore, we have:

\subsubsection*{The $L_{\infty}[1]$-algebra $\mathcal{C}_x$}
At each zero point $x \in s^{-1}(0),$ we associate a \textit{local $L_{\infty}[1]$-algebra,}
\begin{equation}\nonumber 
\mathcal{C}_x := \overbrace{\bigwedge\nolimits^{-\bullet}\Gamma(E^*|_{W_x})}^{\text{Koszul}} \oplus \overbrace{\Omega^{\bullet + 1}_{\mathrm{aug}}(\mathcal{F}_x)}^{\text{de Rham}},
\end{equation}
which consists of the two parts: Koszul and de Rham.

The Koszul part, $\bigwedge\nolimits^{-\bullet}\Gamma(E^*|_{W_x})$ is the Koszul complex,
\[
0 \rightarrow \overbrace{\bigwedge\nolimits^{r}\Gamma(E^*|_{W_x})}^{\text{deg} = -r} \xrightarrow{\iota_{s|_{W_x}}} \cdots \xrightarrow{\iota_{s|_{W_x}}}  \overbrace{\Gamma(E^*|_{W_x})}^{\text{deg} = -1} \xrightarrow{\iota_{s|_{W_x}}}  \overbrace{C^{\infty}(W_x)}^{\text{deg} = 0} \rightarrow 0
\]
with the differential $l_1^{\mathrm{K}} := \iota_{s|_{W_x}},$ given by:
\[
l_1^{\mathrm{K}} : a_1 \wedge \cdots \wedge a_m \mapsto \sum\limits_{i=1}^{m} (-1)^{i+1}a_i(s|_{W_x}) \cdot a_1 \wedge \cdots \wedge \widehat{a_i} \wedge \cdots \wedge a_m,
\]
with all higher $l^{\mathrm{K}}_{k \geq 2}$ being set to zero.

The de Rham part, $\Omega^{\bullet + 1}_{\mathrm{aug}}(\mathcal{F}_x)$ is the augmented foliation de Rham complex degree shifted by 1 equipped with the $L_{\infty}[1]$-algebra structure $\{l^{\mathrm{dR}}_k\}_{k \geq 1}$ obtained from Proposition \ref{augomega}. 

\begin{prop}\cite[Corollary 2.10, Proposition B.16]{Kim1}\label{augomega}
In the above situation, there exists an $L_{\infty}[1]$-algebra structure on the chain complex $\Omega^{\bullet +1}_{\mathrm{aug}}(\mathcal{F}_x)$ that extends $\{l^{\mathcal{F}}_k\}$ on $\Omega^{\bullet +1}(\mathcal{F}_x).$ Moreover, this $L_{\infty}[1]$-algebra has trivial cohomology.
\end{prop}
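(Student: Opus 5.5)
The statement immediately above is Proposition \ref{augomega}: the augmented foliated de Rham complex $\Omega^{\bullet+1}_{\mathrm{aug}}(\mathcal{F}_x)$ carries an $L_\infty[1]$-structure extending $\{l^{\mathcal{F}}_k\}$, and this structure has trivial cohomology. I would prove the two assertions separately. The structure is built by a controlled extension across the single augmentation term. Acyclicity is then a purely linear statement about $l_1$, because we work with strict algebras.

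\textbf{Setting up the complex.} Write $\Omega^{\bullet+1}_{\mathrm{aug}}(\mathcal{F}_x)$ as $\Omega^{\bullet+1}(\mathcal{F}_x)$ together with one extra term in the lowest degree, namely the leafwise-constant functions $C^\infty(W_x)^{\mathcal{F}}$. By (\ref{adslfjkdh}), $W_x=\pi_i^{-1}(\overset{\circ}{W}_x)$ with $\overset{\circ}{W}_x$ a ball, and the leaves of $T\mathcal{F}_x=\ker\beta_{W_x}$ are fibres of a submersion onto a contractible leaf space. The augmentation map $\varepsilon$ sends these functions into $\Omega^{0}(\mathcal{F}_x)$ by inclusion, and $d_{\mathcal{F}}\circ\varepsilon=0$. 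So $l_1^{\mathrm{dR}}$, defined as $d_{\mathcal{F}}$ on the old part and $\varepsilon$ on the new term, squares to zero.

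\textbf{Extending the brackets.} For the higher operations I set $l_k^{\mathrm{dR}}:=l_k^{\mathcal{F}}$ on inputs from $\Omega^{\bullet+1}(\mathcal{F}_x)$. When an input lies in the augmentation term, I replace it by its image under $\varepsilon$ and apply $l_k^{\mathcal{F}}$. If this lands in degree $-1$ (i.e.\ in $\Omega^0$), I check that the result is leafwise closed and lift it back through $\varepsilon$.

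The quadratic relations (\ref{quadrel}) then reduce to those for $\{l^{\mathcal{F}}_k\}$ (\cite[Lemma B.2]{Kim1}) plus terms where $\varepsilon$ appears. Those extra terms cancel because $\varepsilon$ is injective and strictly compatible with the brackets. The main obstacle is this step: showing that brackets of leafwise-constant elements stay in the image of $\varepsilon$. If they do not, I would instead define the extension by homological perturbation along a contraction of $\Omega^{\bullet}(\mathcal{F}_x)$ onto its leafwise cohomology.

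\textbf{Acyclicity.} Since the algebra is strict, the cohomology is that of $(\Omega^{\bullet+1}_{\mathrm{aug}}, l_1^{\mathrm{dR}})$. The foliation is a fibration $W_x\to \overset{\circ}{W}_x$ whose leaves are contractible (tubular fibres crossed with $\ker\beta$-directions, chosen ball-like). So a parametrized leafwise Poincar\'e lemma applies: leafwise cohomology is concentrated in form degree $0$ and equals the leafwise-constant functions. I would prove this parametrized lemma with a homotopy operator that integrates along the radial contraction of the leaves, depending smoothly on the base point.

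The augmentation exactly kills this degree-$0$ cohomology: $\varepsilon$ is injective, and its image is $\ker d_{\mathcal{F}}|_{\Omega^0}$. Hence the augmented complex has vanishing cohomology in every degree, which gives the second claim.
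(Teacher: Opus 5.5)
\textbf{There is a genuine gap in the construction of the higher brackets.} Your acyclicity argument is essentially right, since it only involves $l_1$. You should say explicitly that $\overset{\circ}{W}_x$ is chosen inside a presymplectic Darboux chart, which Lemma \ref{lemdw} allows. Only then is the foliation of $W_x$ a product with star-shaped leaves. Those leaves are $\pi_i^{-1}$ of the characteristic leaves in $\overset{\circ}{W}_x$, not fibres of $W_x\to\overset{\circ}{W}_x$.

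The problem is degrees. Since $\varepsilon$ is part of $l_1$, which has degree $1$, the augmentation term sits in degree $-2$, one below $\Omega^0(\mathcal{F}_x)$. So the rule $l_k(a,\omega_2,\dots,\omega_k):=l^{\mathcal{F}}_k(\varepsilon a,\omega_2,\dots,\omega_k)$ has degree $2$ in each augmentation slot, not $1$. Lifting back through $\varepsilon$ repairs this only when there is exactly one augmentation input and the output lands in $\Omega^0$. For example, $l_2(a,\omega)$ with $\omega\in\Omega^1(\mathcal{F}_x)$ must lie in $\Omega^0$, but your recipe puts it in $\Omega^1$. And $l_2(a,b)$ with two augmentation inputs must vanish, because degree $-3$ is empty.

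The claimed cancellation ``because $\varepsilon$ is injective and strictly compatible with the brackets'' is also unfounded, since there is no bracket on the augmentation term to be compatible with. The arity-two relation on $(a,g)$, with $g\in\Omega^0$, contains $l_2(l_1a,g)=l^{\mathcal{F}}_2(\varepsilon a,g)$. This term has no reason to vanish; for the presymplectic structure it induces the leaf-space Poisson bracket on leafwise-constant functions. It can only be cancelled by $l_1l_2(a,g)\pm l_2(a,l_1g)$. So $l_2(a,-)$ must be a new, form-degree-lowering operator, namely a null-homotopy of the chain map $l^{\mathcal{F}}_2(\varepsilon a,-)$, and your recipe never supplies one. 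Your fallback does not fill the gap either: perturbing along a contraction of $\Omega^\bullet(\mathcal{F}_x)$ onto its cohomology transfers the structure to the leafwise-constant functions rather than extending it to the augmented complex.

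\textbf{How to fix it: reverse the order of your two steps.} First prove acyclicity with an explicit degree $-1$ operator $h$ on $A:=\Omega^{\bullet+1}_{\mathrm{aug}}(\mathcal{F}_x)$ satisfying $l_1h+hl_1=\mathrm{id}$. Take $h$ to be your parametrized leafwise homotopy operator on $\Omega^{\geq 1}$. On $\Omega^0$, set $h(g):=g\circ r$, where $r$ retracts each leaf to its centre; this is leafwise constant.

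Then build the brackets by induction on arity, keeping $l_k=l^{\mathcal{F}}_k$ on $S^k\Omega^{\bullet+1}(\mathcal{F}_x)$. Suppose $l_1,\dots,l_{k-1}$ satisfy the relations up to arity $k-1$. Let $\tilde l_k$ be $l^{\mathcal{F}}_k$ extended by zero to inputs containing an augmentation element. Define the arity-$k$ defect $P_k:=\sum_{i=2}^{k-1}l_{k-i+1}\circ l_i+[l_1,\tilde l_k]$, with the usual unshuffle sums and signs. Then $P_k$ is a $[l_1,-]$-cocycle in $\mathrm{Hom}(S^kA,A)$, and it vanishes on $S^k\Omega^{\bullet+1}(\mathcal{F}_x)$ by the relations for $\{l^{\mathcal{F}}_k\}$. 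The maps vanishing on that subcomplex form a subcomplex of $\mathrm{Hom}(S^kA,A)$, and post-composition with $h$ contracts it. Hence $l_k:=\tilde l_k-h\circ P_k$ satisfies the arity-$k$ relation, is graded symmetric, and still restricts to $l^{\mathcal{F}}_k$. So it is the acyclicity of the augmented complex, not the injectivity of $\varepsilon$, that makes the extension possible.
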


The $L_{\infty}[1]$-structure on $\mathcal{C}_x$ is then given by
\[
l_k : \mathcal{C}_x^{\otimes k} \rightarrow \mathcal{C}_x; \ l_k := l^{\mathrm{K}}_k \oplus l^{\mathrm{dR}}_k,
\]
where the direct sum notation indicates that the operations on the two components are defined separately. It is immediate that the family $\{l_k\}_{k \geq 1}$ satisfies the $L_{\infty}[1]$-relation.

\begin{lem}\cite[Lemma 2.3]{Kim1}\label{lemdw}
For different choices of $\overset{\circ}{W}_x$, we obtain isomorphic de Rham $L_\infty[1]$-algebras.  
\end{lem}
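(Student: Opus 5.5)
The plan is to realize the isomorphism geometrically: produce a diffeomorphism between the two local presymplectic neighborhoods that intertwines the presymplectic forms up to a harmless rescaling, and then use naturality of the construction of $\{l^{\mathcal{F}}_k\}$ in \cite[Lemma B.2]{Kim1} and of its extension in Proposition \ref{augomega}.

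\textbf{Reduction to a common ball.} Let $\overset{\circ}{W}_x$ and $\overset{\circ}{W}{}'_x$ be two open balls in $\mathcal{S}_i$ containing $x$. Since isomorphism of $L_\infty[1]$-algebras is transitive, it suffices to compare each with a third ball. I would take a small ball $\overset{\circ}{W}{}''_x \subset \overset{\circ}{W}_x \cap \overset{\circ}{W}{}'_x$ centered at $x$. I would choose it inside a presymplectic Darboux chart for $\beta|_{\mathcal{S}_i}$ around $x$. Such a chart exists because $\beta|_{\mathcal{S}_i}$ is closed of constant rank. In it, $\beta|_{\mathcal{S}_i} = \sum dq_j\wedge dp_j$ and the leaves of the kernel foliation are the slices $\{(q,p)=\mathrm{const}\}$. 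This reduces the lemma to showing that, for a nested pair of balls $B'' \subset B$ containing $x$, the associated de Rham algebras are isomorphic.

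\textbf{Comparison maps.} For two concentric Darboux balls, the radial dilation $\rho_\lambda(q,p,z)=(\lambda q,\lambda p,\lambda z)$ maps one onto the other. It preserves the kernel foliation and satisfies $\rho_\lambda^*\beta=\lambda^2\beta$. For a general ball $B$ (not itself a Darboux ball) I would instead use a star-shaped exhaustion. By a Moser-type isotopy argument along the kernel foliation, $B$ is foliated-diffeomorphic to a Darboux ball, with $\beta$ pulled back to a form that differs from the standard one by an exact basic form. Next I lift everything to $W_x=\pi_i^{-1}(\overset{\circ}{W}_x)$ as in (\ref{adslfjkdh}). A foliation-preserving diffeomorphism $\overset{\circ}{W}_x\to\overset{\circ}{W}{}''_x$ lifts to the tubular neighborhoods, fibrewise via $\pi_i$, and pulls back $\pi_i^*\beta$ in the same manner. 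Hence it induces a chain isomorphism of the foliated de Rham complexes $\Omega^{\bullet+1}(\mathcal{F}_x)$, and likewise of their augmentations.

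\textbf{Main obstacle: matching the higher brackets.} The chain isomorphism is immediate; the real work is that it intertwines the $l^{\mathcal{F}}_k$. The construction in \cite[Lemma B.2]{Kim1} is built from $\beta$ and the foliation. I therefore expect it to be natural under diffeomorphisms preserving both, which I would check directly from that construction. Two discrepancies then remain.
\begin{enumerate}[label=(\roman*)]
\item The constant factor $\lambda^2$. I would absorb it by the rescaling $\xi\mapsto\lambda^{2(k-1)}$-type weights on operations, i.e.\ the strict isomorphism $\xi\mapsto c^{|\xi|}\xi$ for a suitable constant $c$. This turns $\{l_k\}$ for $\beta$ into $\{l_k\}$ for $\lambda^2\beta$, since the brackets are homogeneous in $\beta$.
\item The exact basic correction from the Moser step. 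I would handle it by a gauge transformation, i.e.\ an $L_\infty[1]$-isomorphism with linear part the identity, exponentiated from the primitive.
\end{enumerate}
Finally, since the extension in Proposition \ref{augomega} is constructed functorially from $\{l^{\mathcal{F}}_k\}$, these isomorphisms extend to the augmented complexes. This yields the isomorphism of de Rham $L_\infty[1]$-algebras for the two choices of $\overset{\circ}{W}_x$.
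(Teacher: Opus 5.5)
\textbf{Your argument works for Darboux balls but breaks down for a general ball.} The paper gives no proof here beyond the citation of \cite[Lemma 2.3]{Kim1}, so your plan has to stand on its own.

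The case that goes through is when $\overset{\circ}{W}_x$ and $\overset{\circ}{W}{}'_x$ are both round balls in presymplectic Darboux charts. Then a composite of chart maps, a translation and a dilation carries one ball onto the other, preserves the kernel foliation and scales $\beta$ by a constant. Lifting this through the trivialised tubular neighbourhood and using naturality of the construction in \cite[Lemma B.2]{Kim1} (including any auxiliary splitting it uses) gives the isomorphism, with no Moser step needed. One correction: the rescaling should be the uniform one $\xi\mapsto c\,\xi$, which replaces $l_k$ by $c^{k-1}l_k$ and fixes $l_1=d_{\mathcal F}$. The map $\xi\mapsto c^{|\xi|}\xi$ you wrote multiplies every $l_k$, $l_1$ included, by the same factor $c$, since all $l_k$ have degree $1$.

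The genuine gap is the step for a general ball: ``Moser along the kernel foliation'' followed by a gauge transformation ``exponentiated from the primitive'' that absorbs an exact basic change $\beta\mapsto\beta+d\gamma$. Neither half is available.
\begin{enumerate}[label=(\roman*)]
\item A general ball need not be foliated-diffeomorphic to a Darboux ball. The restricted foliation can have non-Hausdorff leaf space, e.g.\ a horseshoe-shaped ball that meets some leaves in two intervals.
\item Exact changes of $\beta$ cannot be removed. On an open ball the Moser vector field need not integrate up to time $1$, and the brackets do detect the difference, as the next paragraph shows.
\end{enumerate}

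\textbf{Why (ii) fails.} Look at the symplectic stratum: $\mathcal{S}_0$ is open, $W_x=\overset{\circ}{W}_x$, and the kernel foliation is by points. Suppose the brackets depend on $\beta$ as you assume, e.g.\ the Oh--Park structure, whose binary bracket on $\Omega^0(\mathcal{F}_x)$ is the transverse Poisson bracket. Then the non-augmented de Rham algebra is $C^\infty(W_x)$ concentrated in degree $-1$, with $l_2$ the Poisson bracket of $\beta$. For degree reasons every $L_\infty[1]$-morphism between two such algebras is strict. So an isomorphism with identity linear part would force $\{\cdot,\cdot\}_\beta=\{\cdot,\cdot\}_{\beta+d\gamma}$, i.e.\ $d\gamma=0$.

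Worse, a round ball and the ellipsoid $E(1,2)$ of equal volume in $(\mathbb{R}^4,\omega_0)$ are not conformally symplectomorphic, since their Gromov widths differ. By the rigidity of Poisson Lie algebras (Omori, Atkin--Grabowski: Lie isomorphisms come from conformal symplectomorphisms), their Poisson algebras are not isomorphic at all. So no argument that transports $\beta$ geometrically can handle arbitrary balls.

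\textbf{Two ways to repair it.}
\begin{itemize}
\item Restrict the admissible $\overset{\circ}{W}_x$ to Darboux balls. Your dilation argument then suffices.
\item Work with the augmented algebra, which is the one that actually enters $\mathcal{C}_x$. By Proposition~\ref{augomega} it is acyclic. By the decomposition theorem (over a field of characteristic $0$, every $L_\infty[1]$-algebra is $L_\infty$-isomorphic to a minimal one plus a linear contractible one), it is therefore $L_\infty$-isomorphic to its underlying complex with all higher brackets zero. Any chain isomorphism between the two underlying acyclic complexes then gives an $L_\infty[1]$-isomorphism. Such chain isomorphisms exist because acyclic complexes over a field are classified by the dimensions of their boundary spaces, and here these agree: the leaf dimension is the same for both balls. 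The resulting isomorphism is in general necessarily non-strict and not induced by any map of the balls.
\end{itemize}
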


For the definition of chart morphisms we need the notion of completed algebras:
\begin{defn}[Completed algebras]\label{ladef}
Given a smooth map $\phi: V \rightarrow U,$ We define the \textit{completion of} $\mathcal{C}_{x}$ by
\begin{equation}\nonumber 
\mathcal{C}_{x, \phi} := \left(\bigwedge\nolimits^{-\bullet}\Gamma(E^*|_{W_x})\right)_{\phi} \oplus \Omega^{\bullet + 1}_{\mathrm{aug}, \phi}(\mathcal{F}_x),
\end{equation}
where the Koszul part
\begin{equation}\label{kdzpt}
\left(\bigwedge\nolimits^{-\bullet}\Gamma(E^*|_{W_x})\right)_{\phi} :=  C^{\infty}_{\phi}(W_x) \otimes_{C^{\infty}(W_x)} \bigwedge\nolimits^{-\bullet}\Gamma(E^*|_{W_x}),
\end{equation}
where we consider the inverse limit
\begin{equation}\label{ivlmt}
C_{\phi}^{\infty}(W) := \lim_{\longleftarrow} C^{\infty}(W_x)/ I_{\phi}^{j} \cdot C^{\infty}(W_x),
\end{equation}
where we denote $I_{\phi} := \left\{ f \in C^{\infty}(W_x) \mid f|_{\text{Im}\phi} \equiv 0\right\}.$

Its $L_{\infty}[1]$-structure
\[
l^{\mathrm{K}, \phi}_k: {\left(\bigwedge\nolimits^{i}\Gamma(E^*|_{W_x})\right)_{\phi}}^{\otimes k} \rightarrow \left(\bigwedge\nolimits^{i-1}\Gamma(E^*|_{W_x})\right)_{\phi},\\
\]
for each $1 \leq i \leq r$ is defined as follows: For each $j\geq 1,$ $h \in C^{\infty}(W_x)^{(j)},$ and $a \in \bigwedge\nolimits^{-\bullet}\Gamma(E^*|_{W_x}),$ we set
\[
(j) : l^{\mathrm{K}, \phi}_1(h \otimes a) := [1]_{j-2} \otimes \iota_{s|_{W_x}}(\widetilde{h}a),
\]
where $\widetilde{h}$ is a choice of representative in $C^{\infty}(W_x)$ such that $h = \widetilde{h} + I_{\phi}^j,$ and we set $[1]_{j-2} := 0$ for $j \leq 2$ by definition. All higher $l_{\phi, k \geq 2}^{\mathrm{K}}$'s are set to zero, so the $L_{\infty}$-relation of $\{l^{\mathrm{K}}_{\phi, k}\}_{k \geq 1}$ holds trivially.

The de Rham part $\Omega^{\bullet + 1}_{\mathrm{aug}, \phi}(\mathcal{F}_x)$ is the completed foliation de Rham complex \textit{with augmentation,} given by
\[
\Omega^{\bullet + 1}_{\mathrm{aug}, \phi}(\mathcal{F}_x) := \overbrace{\Omega^{\bullet + 1}(\mathcal{F}_x)_{\phi}}^{\deg \geq -1} \oplus \overbrace{\left(C^{\infty}(W_x)_{\mathcal{F}_x}\right)_{\phi}}^{\deg = -2},
\]
where we denote
\[
\begin{cases}
\Omega^{\bullet + 1}(\mathcal{F}_x)_{\phi} := C_{\phi}^{\infty}(W_x) \otimes_{C^{\infty}(W_x)} \Omega^{\bullet + 1}(\mathcal{F}_x),\\
\big(C^{\infty}(W_x)_{\mathcal{F}_x}\big)_{\phi} : = \ker \big(l_1^{\mathrm{dR}} : \Omega^{-1}(\mathcal{F}_x)[1]_{\phi} \rightarrow \Omega^{0}(\mathcal{F}_x)[1]_{\phi}\big).
\end{cases}
\]
The de Rham part $L_{\infty}[1]$-structure $l_{\phi, k}^{\mathrm{dR}}$ is obtained by applying the following Proposition \ref{augomega}. The formula can be given similarly to $l^{\mathrm{K}, \phi}_{\phi, k}$ in the setting of $V$-algebras in Appendix B of \cite{Kim1}. We omit its precise formulation here. For more details, we refer the reader to \cite[Definition B.19]{Kim1}.

In conclusion, $\mathcal{C}_{x,\phi}$ with $\left\{l_{\phi, k} := l^{\mathrm{K}}_{\phi, k} \oplus l^{\mathrm{dR}}_{\phi, k}\right\}$ is an $L_{\infty}[1]$-algebra with the $L_{\infty}[1]$-relation, which can be verified in a straightforward manner.
\end{defn}

Given a local algebra $\mathcal{C}_x,$ there exists a natural map to its completion:
For each $k \geq 1$, We define
\begin{equation}\label{varep}
\widehat{\varepsilon}_{\phi(x), \phi,k} : \mathcal{C}_x^{\otimes k} \rightarrow \mathcal{C}_{x, \phi}
\end{equation}
by
\begin{equation}\nonumber
\widehat{\varepsilon}_{\phi(x), \phi, k}\big((a_1, \xi_1), \cdots, (a_k,\xi_k)\big) := \begin{cases}
1 \otimes (a_1, \xi_1) = (1 \otimes a_1, 1 \otimes \xi_1) &\text{ if } k = 1,\\
0 &\text{ if } k \geq 2,
\end{cases}
\end{equation}
and consider the family $\widehat{\varepsilon}_{\phi(x), \phi} := \left\{\widehat{\varepsilon}_{\phi(x), \phi,k}\right\}_{k \geq 1}.$
\begin{lem}\cite[Lemma 2.8]{Kim1}\label{vecpf}
$\widehat{\varepsilon}_{\phi(x), \phi}$ is an $L_{\infty}[1]$-morphism.
\end{lem}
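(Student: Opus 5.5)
The statement to prove is the last one displayed, Lemma \ref{vecpf}: $\widehat{\varepsilon}_{\phi(x),\phi}$ is an $L_{\infty}[1]$-morphism $\mathcal{C}_x \rightarrow \mathcal{C}_{x,\phi}$. Since $\widehat{\varepsilon}$ is \emph{strict linear}, meaning $\widehat{\varepsilon}_k = 0$ for $k \geq 2$, I will reduce the relations (\ref{frel}) to a few identities. On the left-hand side only the term $f_1(l_k(\cdots))$ survives, since $f_{k-i+1}$ vanishes unless $k-i+1 = 1$. On the right-hand side only $t = k$ with all $j_1 = \cdots = j_t = 1$ survives. The normalization $\frac{1}{k!}\sum_{\tau \in S_k}$ together with graded symmetry then collapses that sum to $l^{\phi}_k(1\otimes a_1, \cdots, 1 \otimes a_k)$. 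Everything is strict, so there are no $k=0$ terms. The whole proof therefore reduces to checking
\[
1 \otimes l_k(a_1, \cdots, a_k) = l_{\phi,k}(1\otimes a_1, \cdots, 1\otimes a_k), \quad k \geq 1,
\]
separately on the Koszul and de Rham summands, because both $l_k$ and $l_{\phi,k}$ are direct sums.

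For the Koszul part, only $k=1$ matters: higher brackets vanish on both sides. The element $1 \otimes a$ lies in the image of $C^\infty(W_x)$ in $C^\infty_\phi(W_x)$, which is compatible with all the quotients by $I^j_\phi$. So the differential $l^{\mathrm{K},\phi}_1$ on this element is computed by choosing the representative $\widetilde{h} = 1$ at every level $j$ of the inverse system (\ref{ivlmt}). This gives $1 \otimes \iota_{s|_{W_x}}(a)$, which is $\widehat{\varepsilon}_1(l_1^{\mathrm{K}}(a))$. I will also check independence of the representative: $\iota_s$ is $C^\infty(W_x)$-linear, so changing $\widetilde{h}$ by an element of $I^j_\phi$ changes the output by an element of $I^j_\phi$. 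This also shows $\widehat{\varepsilon}_1$ is a chain map level by level, hence in the limit.

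For the de Rham part, I will use that $\Omega^{\bullet+1}(\mathcal{F}_x)_\phi = C^\infty_\phi \otimes_{C^\infty} \Omega^{\bullet+1}(\mathcal{F}_x)$ carries brackets extended from $\{l^{\mathrm{dR}}_k\}$ by the rule used in \cite[Definition B.19]{Kim1}. These brackets are multi-differential operators on forms, so they are local and continuous for the $I_\phi$-adic filtration: they map $I^{j}_\phi$-multiples into $I^{j-N}_\phi$-multiples for a fixed order $N$. They therefore descend to the inverse limit, and on elements of the form $1\otimes\xi_i$ they agree with $1\otimes l^{\mathrm{dR}}_k(\xi_1,\dots,\xi_k)$. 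The augmentation summand of degree $-2$ needs one more check: $\widehat{\varepsilon}_1$ sends $\ker l^{\mathrm{dR}}_1$ into the completed kernel $\big(C^\infty(W_x)_{\mathcal{F}_x}\big)_\phi$, which follows from the $k=1$ identity just established.

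The main obstacle I expect is this de Rham step. The completed brackets are only described by reference to \cite{Kim1}, and I must make sure the shift in filtration degree (the analogue of the $[1]_{j-2}$ convention in the Koszul case) is compatible with taking $1\otimes$. I would first try to verify it directly at a finite level $j$, using the representative $\widetilde{h}=1$, and then pass to the inverse limit.
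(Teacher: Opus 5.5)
The paper gives no argument of its own for this lemma. It cites \cite[Lemma 2.8]{Kim1} and remarks that $\widehat{\varepsilon}_{\phi(x),\phi}$ ``can be easily shown'' to be an $L_{\infty}[1]$-morphism, so your proposal has to stand on its own. Your reduction is correct and is what any proof must do. Since $\widehat{\varepsilon}_k = 0$ for $k \geq 2$, the relations (\ref{frel}) collapse to $\widehat{\varepsilon}_1 \circ l_k = l_{\phi,k} \circ \widehat{\varepsilon}_1^{\otimes k}$, and the check splits over the two summands. The Koszul case holds because $l^{\mathrm{K},\phi}_1$ is just $\mathrm{id} \otimes \iota_{s|_{W_x}}$ on $C^\infty_\phi(W_x) \otimes_{C^\infty(W_x)} \bigwedge^{-\bullet}\Gamma(E^*|_{W_x})$. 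The output at level $j$ sits at level $j-2$, which is harmless after reindexing. On the summand $\Omega^{\bullet+1}(\mathcal{F}_x)_\phi$ your argument is also fine, provided the completed brackets there are defined by the same representative rule as in the Koszul case, as the paper indicates.

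The gap is the augmentation summand $C^\infty(W_x)_{\mathcal{F}_x}$ in degree $-2$, where you only check $k=1$.
\begin{itemize}
\item The brackets $l^{\mathrm{dR}}_k$, $k \geq 2$, with an augmentation input are not among the $l^{\mathcal{F}}_k$. They come from the existence statement Proposition \ref{augomega}. The paper says the completed ones are obtained by applying that proposition again to the completed complex.
\item Write $f_{\mathrm{aug}}$ for a leafwise-constant $f$ placed in degree $-2$, so that $l_1 f_{\mathrm{aug}} = \iota f$. The $k=2$ relation reads $l_1 l_2(f_{\mathrm{aug}},\xi) = -l_2(\iota f,\xi) \mp l_2(f_{\mathrm{aug}}, l_1\xi)$. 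So $l_2(f_{\mathrm{aug}},-)$ is a null-homotopy of the chain map $l_2(\iota f,-)$, determined only up to adding a chain map.
\item Such homotopies come from contracting an acyclic complex, for instance by leafwise Poincar\'e-lemma integration operators. These are not multi-differential operators and need not respect the $I_\phi$-adic filtration, so your continuity argument does not apply to them.
\item Two independent choices, one on $\mathcal{C}_x$ and one on $\mathcal{C}_{x,\phi}$, need not be intertwined by $1 \otimes -$.
\item Your representative trick with $\widetilde{h} = 1$ has nothing to act on either. Leafwise-constant functions do not form a $C^\infty(W_x)$-module, so the completed summand is defined as a kernel, not as $C^\infty_\phi(W_x) \otimes_{C^\infty(W_x)} (-)$.
\end{itemize}
To finish, you must invoke the specific construction of \cite[Proposition B.16, Definition B.19]{Kim1} and show it is natural along $1 \otimes -$. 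That is, on the image of $\widehat{\varepsilon}_1$ the completed augmented brackets must equal $1 \otimes$ the uncompleted ones. Without that, the most you could expect is an $L_\infty[1]$-morphism with nonzero higher components, not the strict map (\ref{varep}).
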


In the context of Definition \ref{ladef} regarding local $L_{\infty}[1]$-algebras, several special cases are worth discussing. First, we have a simple type of completions for surjective $\phi.$
\begin{lem}\cite[Lemma 2.12]{Kim1}\label{surjcp}
If $\phi$ is surjective, then we have an isomorphism
\[
\mathcal{C}_{\phi(x),\phi} \simeq \mathcal{C}_{\phi(x)}.
\]
\end{lem}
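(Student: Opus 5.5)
The plan is to show that the completion in Definition \ref{ladef} is trivial when $\phi$ is surjective, and that the natural map $\widehat{\varepsilon}_{\phi(x),\phi}$ of (\ref{varep}) is a strict $L_\infty[1]$-isomorphism. The first step is to note what surjectivity gives: $\mathrm{Im}\,\phi$ contains the whole neighborhood $W_{\phi(x)}$, so a function that vanishes on $\mathrm{Im}\,\phi$ vanishes identically there. Hence the ideal $I_\phi\subset C^\infty(W_{\phi(x)})$ is zero, and so are all its powers $I_\phi^j$. The inverse system in (\ref{ivlmt}) is therefore constant with identity transition maps, and its limit is $C^\infty_\phi(W_{\phi(x)})=C^\infty(W_{\phi(x)})$. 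Here I read surjectivity as surjectivity onto the relevant local neighborhood $W_{\phi(x)}$; this is the reading I would fix at the outset.

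The second step is to propagate this through the definitions. For the Koszul part, (\ref{kdzpt}) becomes $C^\infty(W)\otimes_{C^\infty(W)}\bigwedge^{-\bullet}\Gamma(E^*|_W)\cong\bigwedge^{-\bullet}\Gamma(E^*|_W)$ via $1\otimes a\mapsto a$. For the de Rham part, in the same way $\Omega^{\bullet+1}(\mathcal{F})_\phi\cong\Omega^{\bullet+1}(\mathcal{F})$. The augmentation term $\big(C^\infty(W)_{\mathcal{F}}\big)_\phi$ is defined as a kernel of $l_1^{\mathrm{dR}}$, so it then coincides with the kernel used to define the uncompleted augmentation. Under these identifications, $\widehat{\varepsilon}_{\phi(x),\phi,1}$ is exactly the map $a\mapsto 1\otimes a$, so it is a degreewise linear bijection.

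The third step is to check that the operations agree under $\widehat{\varepsilon}_1$. Since $I_\phi=0$, each representative $\widetilde h$ in the formula for $l^{\mathrm{K},\phi}_1$ is uniquely $h$ itself, so the completed differential reduces to $\iota_{s|_W}$. The higher Koszul operations vanish on both sides. The de Rham operations $l^{\mathrm{dR}}_{\phi,k}$ are obtained from the same construction (Proposition \ref{augomega} and \cite[Definition B.19]{Kim1}) with trivial completion, so they coincide with $l^{\mathrm{dR}}_k$. Given this, the inverse of $\widehat{\varepsilon}$ is the strict morphism with linear part $\widehat{\varepsilon}_1^{-1}$ and vanishing higher components. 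The $L_\infty[1]$-relations (\ref{frel}) for this inverse reduce to compatibility with each $l_k$, which is exactly what step three establishes. This yields the isomorphism $\mathcal{C}_{\phi(x),\phi}\simeq\mathcal{C}_{\phi(x)}$ and shows that it is realized by $\widehat{\varepsilon}$ of Lemma \ref{vecpf}.

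I expect the main obstacle to be the Koszul differential in the completion. Its level-$j$ formula involves the factor $[1]_{j-2}$, which is set to $0$ for $j\le 2$. I would first verify that this formula defines a map of inverse systems and that, on the limit with constant system, it agrees with $\iota_{s|_W}$. If the truncation convention obstructs a literal level-by-level comparison, I would instead compare the two differentials on the limit directly: I would evaluate both on elements of the form $1\otimes a$, which generate the completion once $C^\infty_\phi=C^\infty$. A secondary point to check is that the de Rham completion formula of \cite{Kim1} is likewise functorial in the ideal, so that it degenerates correctly when $I_\phi=0$.
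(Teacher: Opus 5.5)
Your argument is correct. The paper gives no proof of its own here; the lemma is quoted from \cite[Lemma 2.12]{Kim1}. So the only benchmark is Definition \ref{ladef}, and your route is the natural one: surjectivity forces $I_\phi=0$, so the inverse system is constant and $C^\infty_\phi(W)=C^\infty(W)$. Every completed summand then reduces to the uncompleted one, and $\widehat{\varepsilon}_{\phi(x),\phi,1}\colon a\mapsto 1\otimes a$ is a degreewise bijection. Your local reading of surjectivity is also the one the paper relies on in conditions (b)(iv) and (c) defining $M^k_{pq}$. Indeed, density of $\mathrm{Im}\,\phi\cap W$ in $W$ would already give $I_\phi=0$.

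You work harder than necessary in your third step, and the ``main obstacle'' you anticipate goes away. You need not unwind the level-$j$ Koszul formula with its $[1]_{j-2}$ convention. Nor do you need the de Rham operations of \cite[Definition B.19]{Kim1}, which this paper does not spell out. Lemma \ref{vecpf} already says $\widehat{\varepsilon}_{\phi(x),\phi}$ is an $L_\infty[1]$-morphism. Because its components vanish for $k\ge 2$, relation (\ref{frel}) collapses: only $i=k$ survives on the left, and only $t=k$, $j_1=\cdots=j_k=1$ on the right. What remains is $\widehat{\varepsilon}_1\circ l_k=l_{\phi,k}\circ(\widehat{\varepsilon}_1\otimes\cdots\otimes\widehat{\varepsilon}_1)$ for every $k$.

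Hence bijectivity of $\widehat{\varepsilon}_1$ alone makes the strict map with linear part $\widehat{\varepsilon}_1^{-1}$ an inverse $L_\infty[1]$-morphism. The same intertwining with $l_1$ also shows that $\widehat{\varepsilon}_1$ carries the uncompleted augmentation kernel onto the completed one. The proof then rests only on $I_\phi=0$ and Lemma \ref{vecpf}, not on details of the completed operations.
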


Second, we consider the completion for open subcharts: Let  $o : U \hookrightarrow U'.$ be an open inclusion and $\mathcal{U} := \mathcal{U}'|_U$ the open subchart on $U.$

\begin{lem}\cite[Lemma 2.13]{Kim1}\label{itavsteai}
In the above situation, there exists an $L_\infty[1]$-quasi-isomorphism:
\[
\widehat{o}_x : \mathcal{C}'_{o(x),o} \simeq \mathcal{C}_x.
\]
\end{lem}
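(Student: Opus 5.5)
The statement to prove, Lemma \ref{itavsteai}, says that the open inclusion $o : U \hookrightarrow U'$ induces a quasi-isomorphism $\widehat{o}_x : \mathcal{C}'_{o(x),o} \to \mathcal{C}_x$. The plan is to write $\widehat{o}_x$ down explicitly as a strict $L_\infty[1]$-morphism that is nothing more than restriction of functions, sections and foliated forms. We then show that it is a quasi-isomorphism by reducing to a statement about the completion along an open subset.

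\emph{Step 1: the germs at $x$ agree.} Since $o$ is an open embedding, we may choose the presymplectic neighborhood $W'_{o(x)}$ so that it equals $o(W_x)$. By Lemma \ref{lemdw}, the de Rham part is independent of this choice up to isomorphism. With this choice, $\mathcal{C}'_{o(x)}$ and $\mathcal{C}_x$ are built from the same data:
\begin{enumerate}
\item the bundle $E|_{W_x}$,
\item the section $s|_{W_x}$,
\item the form $\beta_{W_x}$,
\item the foliation $\mathcal{F}_x$.
\end{enumerate}
Hence they are canonically isomorphic as $L_\infty[1]$-algebras.

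\emph{Step 2: the completion.} Next we analyse $C^\infty_o(W'_{o(x)})$. The image of $o$ meets $W'_{o(x)}$ in the whole of $W'_{o(x)}$, because $W'_{o(x)} = o(W_x)$. So the ideal $I_o$ vanishes, and the inverse limit (\ref{ivlmt}) is just $C^\infty(W'_{o(x)})$. In effect, $o$ is surjective onto the relevant neighborhood, and Lemma \ref{surjcp} applies to give $\mathcal{C}'_{o(x),o} \simeq \mathcal{C}'_{o(x)}$.

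\emph{Step 3: compose.} Composing the isomorphisms from Steps 1 and 2 gives the map $\widehat{o}_x$. Its linear part $(\widehat{o}_x)_1$ is restriction, and all higher components vanish. It commutes with $l_1^{\mathrm{K}}$ and $l_1^{\mathrm{dR}}$ because both differentials are local operators. It intertwines the higher brackets $l_k^{\mathrm{dR}}$ for the same reason, provided the structure of Proposition \ref{augomega} is constructed locally, i.e.\ natural under restriction to open subsets. A strict isomorphism is in particular a quasi-isomorphism.

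\emph{Main obstacle.} The hard step is the case where $W'_{o(x)}$ cannot be taken equal to $o(W_x)$. This happens, for instance, when $W'_{o(x)}$ is prescribed as part of the chart $\mathcal{U}'$, so that $o(W_x) \subsetneq W'_{o(x)}$. Then the completion is genuinely along a proper open subset, and $I_o$ consists of functions vanishing on an open set. In that case I would show directly that $C^\infty_o(W') \to C^\infty(W_x)$ is an isomorphism: a function that vanishes on $o(W_x)$ to every order, once restricted, is zero there.

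The remaining task in this case is to check that the restriction maps are quasi-isomorphisms on both summands.
\begin{enumerate}
\item For the Koszul parts, both complexes are computed over contractible balls with the same section germ. I would compare them by a filtration (spectral sequence) argument over the degree $-r,\dots,0$ terms.
\item For the de Rham parts, Proposition \ref{augomega} shows that both complexes are acyclic, so any chain map between them is a quasi-isomorphism.
\end{enumerate}
Together these give that $(\widehat{o}_x)_1$ is a quasi-isomorphism, as required.
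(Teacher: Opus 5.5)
The paper only quotes this lemma from \cite{Kim1}, so there is no in-paper proof to compare with. Judged on its own, your argument has a genuine gap. Steps 2--3 are fine in the degenerate case $W'_{o(x)}=o(W_x)$: there $I_o=0$, the two local algebras are literally the same, and the identity is the required map. But the reduction in Step 1 is not available. The neighborhood $W'_{o(x)}$ is part of the given chart $\mathcal{U}'$, and $\mathcal{C}'_{o(x),o}$ is defined in terms of it. Lemma \ref{lemdw} only says that the de Rham summand of a single local algebra is independent of the choice of $\overset{\circ}{W}_x$ up to isomorphism; it says nothing about the Koszul summand or about the completion. So everything rests on your ``main obstacle'' case, and that is where the argument fails.

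\emph{The completion is not $C^\infty(W_x)$.} The map $C^\infty_o(W'_{o(x)})\to C^\infty(W_x)$ is not an isomorphism. By definition $I_o$ consists of functions vanishing on the open set $A:=\mathrm{Im}(o)\cap W'_{o(x)}$, not merely on $o(W_x)$. Every element of the inverse limit (\ref{ivlmt}) therefore maps to the restriction to $W_x$ of a function smooth on all of $W'_{o(x)}$. Once $W_x$ is a proper open subset of $W'_{o(x)}$, any function on $W_x$ that is unbounded near a point of $\partial W_x\cap W'_{o(x)}$ is missed. If moreover $A\setminus\overline{W_x}\neq\emptyset$, bump functions supported there lie in the kernel. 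Your remark about functions vanishing to infinite order addresses injectivity at best.

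\emph{Restriction is not a quasi-isomorphism on the Koszul summand.} This is what sinks the fallback. Koszul $H^0$ is the coefficient ring modulo the ideal generated by the components of the section, so it depends on the whole neighborhood, not on the germ of $s$ at $x$. Already for $\mathrm{rk}\,E=0$ the induced map on $H^0$ is exactly the non-surjective restriction $C^\infty_o(W'_{o(x)})\to C^\infty(W_x)$ above. Since the complex splits as a direct sum, no filtration or spectral sequence argument can repair this.

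\emph{The de Rham summand is not covered either.} Proposition \ref{augomega} gives acyclicity only of the uncompleted $\Omega^{\bullet+1}_{\mathrm{aug}}(\mathcal{F}_x)$. Acyclicity of the completed $\Omega^{\bullet+1}_{\mathrm{aug},o}(\mathcal{F}'_{o(x)})$, and compatibility of $l^{\mathrm{dR}}_k$ with restriction, are assumed rather than proved.

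To close the gap you need one of two things. Either state and justify a convention that puts you in the situation of Steps 1--3, i.e.\ $W'_{o(x)}\subset o(U)$ and $W_x=o^{-1}(W'_{o(x)})$. Or construct a comparison map other than restriction, since restriction cannot be the quasi-isomorphism in general.
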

\label{appen:forms}

\end{document}